\newtheorem{theorem}{Theorem}[section]
\newtheorem{proposition}{Proposition}[section]
\newtheorem{corollary}{Corollary}[section]
\newtheorem{definition}{Definition}[section]
\newtheorem{remark}{Remark}[section]
\definecolor{blue-violet}{rgb}{0.54,0.17,0.89}
\definecolor{amethyst}{rgb}{0.6,0.4,0.8}
\definecolor{darkviolet}{rgb}{0.58, 0.0, 0.83}
\definecolor{darkgreen}{rgb}{0,.4,0}
\definecolor{mixedgreen}{rgb}{0.3,0.6,00}
\definecolor{bananayellow}{rgb}{1.0, 0.88, 0.21}
\definecolor{arylideyellow}{rgb}{0.91, 0.84, 0.42}
\definecolor{bananamania}{rgb}{0.98, 0.91, 0.71}
\newcommand{\REMOVEsk}[1]%
           {{\color{magenta}\sout{#1}}}
\begin{document}
\title{Linear Time-Varying Dynamic-Algebraic Equations of Index $\geq 2$ on Time Scales}

\author{Svetlin G. Georgiev\footnote{Sorbonne University, Department of Mathematics, Paris, France}\ and Sergey Kryzhevich\footnote{Gda\'nsk University of Technology, Faculty of Applied Physics and Mathematics, Gda\'nsk, Poland, serkryzh@pg.edu.pl}
}
\maketitle
\begin{abstract}
In this paper, we introduce a class of linear time-varying dynamic-algebraic equations(LTVDAE) of tractability index $\geq 2$ on arbitrary time scales. We propose a procedure for the decoupling of the considered class LTVDAE. In the paper is used a projector approach. This work is a continuation of our previous article where we study equations of index 1.
\end{abstract}

\noindent\textbf{Keywords:} time scales, linear systems, nabla derivative, decoupling, projection methods 

\section{Introduction}

In this paper,  we will investigate the LTVDAE
\begin{equation}
\label{6.1} A^{\sigma}(t)(Bx)^{\Delta}(t)=C^{\sigma}(t) x^{\sigma}(t)+f(t),\quad t\in J,
\end{equation}
where $A: J\to \mathcal{M}_{n\times m}$, $B: J\to \mathcal{M}_{m\times n}$, $C: J\to \mathcal{M}_{n\times n}$,  $f: J\to \mathbb{R}^n$ are given, $x: J\to \mathbb{R}^n$ is the unknown value,  $J\subseteq \mathbb{T}$, $\mathbb{T}$ is a time scale with forward jump operator and delta differentiation operator $\sigma$ and $\Delta$, respectively. Here, with $\mathcal{M}_{p\times q}$ we denote the space of $p\times q$ matrices with real entries.  More precisely, we give conditions for $A$, $B$, $C$, and $f$ under which we construct projectors and matrix chains ensuring decoupling of the LTVDAE \eqref{6.1}. To the best of our knowledge, there are not any investigations devoted to LTVDAE  on arbitrary time scales. Compared to our recent work \cite{SGSK1}, we study a more sophisticated case of index, greater or equal to 2.

The paper is organized as follows. In the next section, we give some basic facts of time scale calculus necessary for our main results. In Section 3, we construct the so-called matrix chains. In Section 4, we deduce some basic properties of the constructed objects. In Section 5, we give a decoupling procedure for LTDAE of index $\geq 2$. In Section 6, we give an example illustrating the obtained results. A conclusion is given in Section 7. Below, we remove the explicit dependence on $t$ for the sake of notational simplicity. 

\section{Time Scales Essential}

In this paper, a time scale denoted by the symbol $\mathbb T$, is any closed non-empty subset of $\mathbb R$. We suppose that a time scale $\mathbb T$ has the topology that inherits from the real numbers with the standard topology. 

We set $\inf\emptyset=\sup{\mathbb T}$ and $\sup\emptyset=\inf{\mathbb T}$.

\begin{definition}
For $t \in {\mathbb T}$ we define the forward jump operator $\sigma : {\mathbb T} \mapsto {\mathbb T}$ as follows
$\sigma(t) = \inf\{s \in {\mathbb T} : s > t\}$, $\rho(t) = \sup\{s \in {\mathbb T} : s < t\}$. We note that $\sigma(t) \ge t$, $\rho(t)\le t$ for any $t \in {\mathbb T}$.
\end{definition}
\begin{definition}
We define the graininess function by the formula $\mu(t)=\sigma(t)-t$. The point is called \emph{right-dense} if $\mu(t)=0$ and right-scattered otherwise. 
\end{definition}
\begin{definition} If $f : {\mathbb T} \mapsto {\mathbb R}$ is a function, then we define
$f^\sigma : {\mathbb T} \mapsto {\mathbb R}$ by
$f^{\sigma}(t)=f(\sigma(t))$ for any $t \in {\mathbb T}$, i.e., $f^\sigma =f\circ \sigma$.
\end{definition}

Here is the definition of a segment of a time scale.

\begin{definition}
Let $a,b\in {\mathbb T}$, $a\le b$. We define the segment $[a,b]$ in ${\mathbb T}$
by $$[a, b] = \{t \in {\mathbb T} : a \le t \le b\}.$$
\end{definition}

Open intervals, half-open intervals, half-lines and  so on are defined accordingly.

\begin{definition}
We say that a function is rd-continuous if it is continuous in all right-dense points and there exists a left limit at left-dense points (that may not coincide with the value of the function at that point).
\end{definition}

Now we introduce the following technical notion. Let ${\mathbb T}^\kappa={\mathbb T}$ if $\sup {\mathbb T}=+\infty$ and ${\mathbb T}^\kappa={\mathbb T} \setminus (\rho(\sup {\mathbb T}), \sup {\mathbb T}]$ otherwise. In other words, this means that the time scale ${\mathbb T}^\kappa$ coincides with the entire scale ${\mathbb T}$ unless $\sup {\mathbb T}$ is an isolated real point. In that case ${\mathbb T}^\kappa$ coincides with all the time scale ${\mathbb T}$ except the point $\sup {\mathbb T}$.

\begin{definition}
Assume that $f : {\mathbb T} \mapsto {\mathbb R}$ is a rd-continuous function and let $t \in {\mathbb T}^\kappa$. We define $f^\Delta(t)$ as follows: for any $\varepsilon > 0$ there is a neighborhood $U$ of $t$, $U=(t-\delta,t+\delta)$ for some
$\delta > 0$, such that
$$|f(\sigma(t))-f(s)-f^\Delta(t)(\sigma(t)-s)|\le \varepsilon|\sigma(t)-s|\quad \mbox{for all}\quad s\in U, s \neq \sigma(t).$$
We say that $f^\Delta(t)$ is the delta or Hilger derivative of $f$ at $t$.

We say that $f$ is delta or Hilger differentiable, shortly differentiable, in $\mathbb T$ if $f^\Delta (t)$ exists for all $t \in {\mathbb T}$. The function $f^\Delta : {\mathbb T} \mapsto {\mathbb R}$ is said to be delta derivative or Hilger derivative, shortly derivative, of $f$ in $\mathbb T$.
\end{definition}

If ${\mathbb T}={\mathbb R}$, then the delta derivative coincides with the classical derivative, if ${\mathbb T} = {\mathbb N}$, this is just the increment $f(n+1)-f(n)$.

We list some basic properties of Hilger derivatives.

\begin{theorem} Let $f :{\mathbb T} \mapsto R$ be a function and let $t\in {\mathbb T}^\kappa$. Then the
following holds.
\begin{enumerate}
\item If $f$ is differentiable at $t$, then $f$ is continuous at $t$.
\item If $f$ is continuous at $t$ and $t$ is right-scattered, then $f$ is differentiable at $t$ with
$$f^\Delta(t)= \dfrac{f(\sigma(t))-f(t)}{\mu(t)}.$$
\item If $t$ is right-dense, then $f$ is differentiable if and only if the limit
$$f^\Delta(t):=\lim_{s\to t} \dfrac{f(t)-f(s)}{t-s}$$
exists and is finite.
\item If $f$ is differentiable at $t$, then
$f (\sigma (t)) = f (t) + \mu(t)f^\Delta (t)$.
\end{enumerate}
\end{theorem}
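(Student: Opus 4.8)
The plan is to verify each of the four items by unwinding the Hilger derivative definition directly, splitting into the right-dense case ($\mu(t)=0$, $\sigma(t)=t$) and the right-scattered case ($\mu(t)>0$, $\sigma(t)>t$) whenever the value of $\mu(t)$ matters. Items (2)--(4) reduce to clean substitutions into the defining inequality and follow quickly; item (1) is the only one requiring a genuine $\varepsilon$-estimate, and I expect it to be the main obstacle, precisely because it must be proved \emph{before} and independently of the explicit formula for $f^\Delta(t)$.

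The workhorse for the right-scattered statements is the substitution $s=t$, which is legitimate because $t\in U$ and $t\neq\sigma(t)$. Whenever $f^\Delta(t)$ exists at a right-scattered $t$, this substitution turns the defining inequality into $|f(\sigma(t))-f(t)-f^\Delta(t)\mu(t)|\le\varepsilon\mu(t)$ for every $\varepsilon>0$, forcing $f(\sigma(t))-f(t)-f^\Delta(t)\mu(t)=0$. This single identity yields both the formula in item (2) and the nontrivial (right-scattered) case of item (4), since it rearranges to $f(\sigma(t))=f(t)+\mu(t)f^\Delta(t)$. For the existence half of item (2) I would assume only continuity at the right-scattered point $t$, set $D=(f(\sigma(t))-f(t))/\mu(t)$, and verify the definition with $f^\Delta(t)=D$: choosing $\delta<\mu(t)$ makes $U=(t-\delta,t+\delta)$ meet $\mathbb{T}$ only in points $\le t$, the inequality holds with equality at $s=t$, and for $s<t$ the quantity $f(\sigma(t))-f(s)-D(\sigma(t)-s)$ tends to $0$ by continuity while $|\sigma(t)-s|\ge\mu(t)$ stays bounded below, so the estimate holds once $s$ is close enough to $t$.

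Item (3) is immediate on substituting $\sigma(t)=t$ and $\mu(t)=0$: the defining inequality becomes $|f(t)-f(s)-f^\Delta(t)(t-s)|\le\varepsilon|t-s|$, which is exactly the assertion that $(f(t)-f(s))/(t-s)\to f^\Delta(t)$ as $s\to t$, so the two notions and their values coincide. Item (4) then closes by cases: at a right-dense point both sides equal $f(t)$ because $\mu(t)=0$, and at a right-scattered point it is the rearrangement of the $s=t$ identity noted above.

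The real work is item (1). Here I would fix $\varepsilon\in(0,1)$ and, anticipating the final bound, select an auxiliary tolerance $\varepsilon'$ (depending on $\varepsilon$, $|f^\Delta(t)|$, and $\mu(t)$) and a neighborhood $U$ on which the defining inequality holds with $\varepsilon'$ in place of $\varepsilon$. Writing $A=f(\sigma(t))-f(s)-f^\Delta(t)(\sigma(t)-s)$ and $B=f(\sigma(t))-f(t)-f^\Delta(t)\mu(t)$, the key algebraic identity is $f(s)-f(t)=(B-A)-f^\Delta(t)(t-s)$, so that $|f(s)-f(t)|\le|A|+|B|+|f^\Delta(t)|\,|t-s|$. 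Bounding $|A|\le\varepsilon'|\sigma(t)-s|$ and $|B|\le\varepsilon'\mu(t)$ from the definition and using $|\sigma(t)-s|\le\mu(t)+|t-s|$ gives $|f(s)-f(t)|\le\varepsilon'(2\mu(t)+|t-s|)+|f^\Delta(t)|\,|t-s|$, and shrinking $U$ so that $|t-s|$ is small makes the right-hand side at most $\varepsilon$. The subtlety to watch is that this estimate never divides by $\mu(t)$, so a single argument covers the right-dense and right-scattered cases uniformly, which is why continuity must be established here rather than deduced from the difference-quotient formulas.
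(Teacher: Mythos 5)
Your proposal is correct, but note that the paper itself gives no proof of this theorem: it appears in the background section ``Time Scales Essential'' as standard material quoted from the time-scale literature (Bohner--Peterson), so there is no in-paper argument to compare against. What you have written is essentially the classical textbook proof: the substitution $s=t$ at a right-scattered point to force $f(\sigma(t))-f(t)=\mu(t)f^{\Delta}(t)$ (items (2) and (4)), the direct identification of the definition with the difference-quotient limit when $\sigma(t)=t$ (item (3)), and for item (1) the decomposition $f(s)-f(t)=(B-A)-f^{\Delta}(t)(t-s)$ with the auxiliary tolerance $\varepsilon'$ chosen in advance to absorb the non-shrinking term $2\varepsilon'\mu(t)$ --- exactly the device used in the standard proof, and you correctly flagged that $\varepsilon'$ must depend on $\mu(t)$ and $|f^{\Delta}(t)|$ before the neighborhood is shrunk. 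The only point worth making explicit in a written version is that in item (2) the neighborhood radius $\delta<\mu(t)$ guarantees $U\cap\mathbb{T}\subseteq(-\infty,t]$, so the only cases to check are $s=t$ (equality) and $s<t$ (continuity plus the lower bound $|\sigma(t)-s|\ge\mu(t)$), which you did.
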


We list some basic properties of the Hilger derivatives.

\begin{theorem} Assume that $f,g:{\mathbb T}\mapsto {\mathbb R}$ are differentiable at $t\in {\mathbb T}$. Then
\begin{enumerate}
\item the sum $f+g$ is differentiable at $t$ with
$$(f+g)^\Delta(t)=f^\Delta(t)+g^\Delta(t).$$
\item for any constant $\alpha$, the function $\alpha f$ is differentiable at $t$ with $$(\alpha f )^\Delta(t) = \alpha f^\Delta(t).$$
\item if $g(t),g(\sigma(t))\neq 0$ then $f/g$ is differentiable at $t$ with
$$(f/g)^\Delta (t)= \dfrac{f^\Delta(t)g(t)-f(t)g^\Delta(t)}{g(t)g(\sigma(t))}.$$
\item the product $fg$ is differentiable at $t$ with
$$(fg)^\Delta(t) = f^\Delta(t)g(t)+f(\sigma(t))g^\Delta(t)
= f(t)g^\Delta(t)+f^\Delta(t)g(\sigma(t)).$$
\end{enumerate}
\end{theorem}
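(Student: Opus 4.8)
The plan is to work directly from the $\varepsilon$--$\delta$ definition of the delta derivative, since it treats right-dense and right-scattered points uniformly and spares us a separate analysis based on items 2 and 3 of the preceding theorem. Throughout I will write $\sigma = \sigma(t)$, and I will repeatedly use that differentiability implies continuity (item 1 of the previous theorem), so that $f$ and $g$ are bounded near $t$ and $g(s)\to g(t)$ as $s\to t$.

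For the sum, given $\varepsilon>0$ I would choose neighborhoods $U_1,U_2$ of $t$ witnessing the derivatives of $f$ and $g$ for this $\varepsilon$, set $U=U_1\cap U_2$, and apply the triangle inequality to obtain
$$\big|(f+g)(\sigma)-(f+g)(s)-\big(f^\Delta(t)+g^\Delta(t)\big)(\sigma-s)\big|\le 2\varepsilon|\sigma-s|\quad\text{on }U,$$
which (after rescaling $\varepsilon$) gives item 1. Item 2 is the same estimate with the factor $|\alpha|$ pulled out, so both are routine.

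The substantive step is the product rule. I would start from the identity
$$(fg)(\sigma)-(fg)(s)=\big(f(\sigma)-f(s)\big)g(s)+f(\sigma)\big(g(\sigma)-g(s)\big),$$
and write $f(\sigma)-f(s)=f^\Delta(t)(\sigma-s)+A$ and $g(\sigma)-g(s)=g^\Delta(t)(\sigma-s)+B$, where the definition gives $|A|,|B|\le\varepsilon|\sigma-s|$ on a common neighborhood. Substituting and subtracting $\big(f^\Delta(t)g(t)+f(\sigma)g^\Delta(t)\big)(\sigma-s)$ leaves the three error terms
$$f^\Delta(t)\big(g(s)-g(t)\big)(\sigma-s)+A\,g(s)+f(\sigma)\,B,$$
each of which is bounded by a constant multiple of $\varepsilon|\sigma-s|$: the first because continuity of $g$ at $t$ makes $|g(s)-g(t)|$ small, the second and third because $g$ is bounded near $t$ while $|A|,|B|\le\varepsilon|\sigma-s|$. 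This yields the first form of item 4, and the second form follows immediately by applying the first form to $gf$ and invoking commutativity of the product.

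For the quotient I would first establish the reciprocal rule. Continuity of $g$ at $t$ makes $g(s)\neq0$ for $s$ near $t$, while the separate hypothesis $g(\sigma(t))\neq0$ controls the remaining denominator --- this is precisely the point where the right-scattered case forces us to assume $g(\sigma(t))\neq0$ independently, since $\sigma(t)$ need not lie near $t$. Expanding $\tfrac{1}{g(\sigma)}-\tfrac{1}{g(s)}=\tfrac{-(g(\sigma)-g(s))}{g(\sigma)g(s)}$ and comparing it with $-\tfrac{g^\Delta(t)}{g(t)g(\sigma)}(\sigma-s)$ through an estimate parallel to the product-rule one gives $(1/g)^\Delta(t)=-g^\Delta(t)/\big(g(t)g(\sigma(t))\big)$. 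Applying the \emph{second} form of the product rule to $f\cdot(1/g)$ then produces exactly $\big(f^\Delta(t)g(t)-f(t)g^\Delta(t)\big)/\big(g(t)g(\sigma(t))\big)$, as claimed. I expect the main obstacle to be the bookkeeping in the product-rule estimate --- keeping the uniform $\varepsilon$-bound valid at right-dense and right-scattered points simultaneously while controlling the cross term through continuity --- and, for the quotient, selecting the correct one of the two equivalent forms of the product rule so that the stated formula (with $g(t)$, not $g(\sigma(t))$, in the numerator) emerges.
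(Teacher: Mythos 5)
This theorem is stated in the paper as background material (Section 2) and is given \emph{without proof} --- it is the standard differentiation theorem of time-scale calculus, quoted from the literature (Bohner--Peterson), so there is no paper proof to compare against. Judged on its own merits, your proof is correct and essentially reconstructs the textbook argument from the $\varepsilon$--$\delta$ definition. The two places where the argument genuinely has to be careful are both handled properly: in the product rule, the decomposition $(fg)(\sigma)-(fg)(s)=\bigl(f(\sigma)-f(s)\bigr)g(s)+f(\sigma)\bigl(g(\sigma)-g(s)\bigr)$ leaves exactly the three error terms you list, and each is $O(\varepsilon|\sigma-s|)$ for the reasons you give (continuity of $g$ at $t$ for the cross term $f^\Delta(t)\bigl(g(s)-g(t)\bigr)(\sigma-s)$, boundedness of $g$ near $t$ and the fixed value $f(\sigma(t))$ for the other two); and in the quotient rule, you correctly observe that continuity gives $g(s)\neq 0$ only near $t$, so that $g(\sigma(t))\neq 0$ must be assumed separately at right-scattered points, and you correctly select the \emph{second} form of the product rule when combining $f$ with $1/g$, since the first form would instead produce $\bigl(f^\Delta(t)g(\sigma(t))-f(\sigma(t))g^\Delta(t)\bigr)/\bigl(g(t)g(\sigma(t))\bigr)$, an equivalent but differently stated identity. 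The only cosmetic caveat is that the constants in your $O(\varepsilon|\sigma-s|)$ bounds depend on $t$ (through $|f^\Delta(t)|$, $|f(\sigma(t))|$, local bounds on $g$ and $1/g$), which is harmless because $t$ is fixed and a final rescaling of $\varepsilon$ absorbs them; stating this explicitly would make the write-up airtight.
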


A symmetric situation may be considered by replacing $\sigma(t)$ with $\rho(t)$. In this case, we can define the left graininess function by the formula: $\nu(t)=t-\rho(t)$. One can defined the scale ${\mathbb T}_\kappa$ similarly to 
${\mathbb T}^\kappa$.

We define the nabla Hilger derivative $f^\nabla(t)$ as follows: for any $\varepsilon > 0$ there is a neighborhood $U$ of $t$, $U=(t-\delta,t+\delta)\bigcap {\mathbb T}$ for some $\delta > 0$, such that
$$|f(\rho(t))-f(s)-f^\Delta(t)(\rho(t)-s)|\le \varepsilon|\rho(t)-s|\quad \mbox{for all}\quad s\in U, s \leq \rho (t).$$

However, one can reduce the case of $\nabla$ derivative to the case of $\delta$ -- derivative by applying the transformation $t\mapsto -t$.

The integral calculus on time scales is also quite well-developed, see \cite{bohner1}. Now we introduce some basic concepts related to the theory of linear systems of the form
\begin{equation}\label{linear}
    x^\Delta=A(t)x
\end{equation}
on time scales.

Properties of solutions of system \eqref{linear} are also studied. In general, they correspond to those for ordinary differential equations with one important exception which can be illustrated by the following example.

\noindent\textbf{Example}. Let ${\mathbb T}={\mathbb N}$. Consider the scalar equation
$x^\Delta=-x$. Then any solution of the considered equation with initial conditions $x(n_0)=x_0$ is zero for any $n>n_0$, so there is no backward uniqueness of solutions. Moreover, solutions with non-zero initial conditions do not exist backwards.

\begin{definition}
We say that the matrix $A$ is regressive with respect to $\mathbb T$ provided $E+\mu(t)A(t)$ is invertible for all $t\in {\mathbb T}$. Similarly, a function $a$ is regressive if $1+\mu(t)a(t)\neq 0$ for all $t\in {\mathbb T}$.
\end{definition}

\begin{theorem} \hfill
\begin{enumerate}
\item Any solution of system \eqref{linear} with initial conditions $x(t_0)=x_0$ exists for any $t\ge t_0$.
\item Solutions of system \eqref{linear} are unique if the matrix is regressive.
\item The matrix-valued function $A$ is regressive if and only if the eigenvalues $\lambda_i(t)$ of $A(t)$ are regressive for all $1 \le  i \le  n$.
\end{enumerate}
\end{theorem}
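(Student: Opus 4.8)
The plan is to treat the three assertions separately, exploiting the dichotomy between right-scattered and right-dense points provided by the properties of the Hilger derivative. For the forward existence claim I would first observe that at any right-scattered point $t$ the equation \eqref{linear} is, via the identity $x(\sigma(t))=x(t)+\mu(t)x^\Delta(t)$, equivalent to the explicit recursion
\begin{equation*}
x(\sigma(t))=\bigl(E+\mu(t)A(t)\bigr)x(t),
\end{equation*}
which determines $x(\sigma(t))$ from $x(t)$ by a single matrix multiplication and hence needs no invertibility hypothesis. On a maximal right-dense subinterval the equation reduces to the classical linear system $\dot x=A(t)x$ with $A$ continuous there (rd-continuity yields continuity on right-dense sets), so a solution exists and extends across the whole interval by the standard linear ODE theory. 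I would then build a forward solution by alternating these two constructions and invoke the induction principle on time scales (cf. \cite{bohner1}) to show that the construction reaches every $t\ge t_0$, a Gronwall-type estimate on $\mathbb T$ ruling out finite escape and giving global forward existence. The delicate bookkeeping is verifying that the patched function is genuinely delta-differentiable and rd-continuous at the junction points and at left-dense accumulation points of scattered points; this, rather than any single computation, is the main obstacle in part 1.

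For uniqueness under regressivity I would set $z=x-y$ for two solutions sharing the data $x(t_0)=x_0$, so that $z$ solves \eqref{linear} with $z(t_0)=0$. Forward, $z$ vanishes automatically: on right-dense stretches by ODE uniqueness, and at right-scattered points through the recursion above. Backward is where regressivity enters, since the recursion now inverts to $z(t)=(E+\mu(t)A(t))^{-1}z(\sigma(t))$, which is legitimate precisely because $E+\mu(t)A(t)$ is invertible, so the zero value propagates to $t<t_0$ as well; on right-dense intervals ODE uniqueness runs in both directions. Hence $z\equiv 0$ on all of $\mathbb T$. This also accounts for the example $\mathbb T=\mathbb N$, $x^\Delta=-x$: there $E+\mu(t)A(t)=0$ is singular, the backward step cannot be inverted, and uniqueness fails.

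For the eigenvalue characterization I would argue pointwise in $t$. If $\mu(t)=0$ both conditions hold trivially, so suppose $\mu(t)\neq 0$. If $\lambda$ is an eigenvalue of $A(t)$ with eigenvector $v$, then $(E+\mu(t)A(t))v=(1+\mu(t)\lambda)v$, so the eigenvalues of $E+\mu(t)A(t)$ are exactly the numbers $1+\mu(t)\lambda_i(t)$ and
\begin{equation*}
\det\bigl(E+\mu(t)A(t)\bigr)=\prod_{i=1}^{n}\bigl(1+\mu(t)\lambda_i(t)\bigr).
\end{equation*}
Thus $E+\mu(t)A(t)$ is invertible if and only if no factor vanishes, that is, if and only if $1+\mu(t)\lambda_i(t)\neq 0$ for every $i$, which is precisely the regressivity of each eigenvalue. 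Since this equivalence holds at every $t\in\mathbb T$, the matrix $A$ is regressive if and only if all of its eigenvalues are. The only point requiring care is that the $\lambda_i(t)$ may be complex, but the determinant identity and the scalar regressivity condition $1+\mu(t)a\neq 0$ remain meaningful over $\mathbb C$, so the argument is unaffected.
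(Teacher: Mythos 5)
The paper never proves this theorem: it sits in the preliminary Section 2 as standard background on linear dynamic equations, quoted from the time-scale literature (cf.\ \cite{bohner1}), so there is no internal argument to measure your proposal against and it must be judged on its own merits. Your part 3 is complete and correct: working pointwise in $t$ and using $\det\bigl(E+\mu(t)A(t)\bigr)=\prod_{i=1}^{n}\bigl(1+\mu(t)\lambda_i(t)\bigr)$ is exactly the standard argument, and the remark about complex eigenvalues is the only care needed. The organizing idea of part 2 --- forward uniqueness is automatic, regressivity is needed only to invert the one-step recursion backwards --- is also the right one, and it correctly explains the paper's example $x^\Delta=-x$ on $\mathbb{N}$.

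Parts 1 and 2, however, rest on a picture of $\mathbb{T}$ that is not exhaustive, and this is a genuine gap rather than bookkeeping. A time scale is not an alternation of right-scattered points and right-dense subintervals: a right-dense point need not lie in any nondegenerate interval of $\mathbb{T}$. Take $\mathbb{T}=\{0\}\cup\{1/n:\ n\in\mathbb{N}\}$ (or a Cantor-type scale) and $t_0=0$: the point $0$ is right-dense, while every point to its right is right-scattered and these accumulate at $0$. Starting from $x(0)=x_0$, neither of your two constructions applies --- there is no first point after $0$ at which the recursion $x(\sigma(t))=\bigl(E+\mu(t)A(t)\bigr)x(t)$ could begin, and no interval on which classical ODE theory could run --- so ``alternating these two constructions'' never leaves $t_0$. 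Note also that the troublesome points are right-dense points approached by scattered points \emph{from the right}, not the left-dense junctions you flag (those are handled by mere continuity when moving forward). What such points require is a genuine limit argument: Picard iteration on the equivalent delta-integral equation $x(t)=x_0+\int_{t_0}^{t}A(s)x(s)\,\Delta s$, or the induction principle combined with a local Gronwall estimate to pass through them; the same estimate is also what yields forward uniqueness there in part 2, where again neither ODE uniqueness nor the recursion says anything. You invoke the induction principle and a Gronwall bound by name, but the step they must perform --- continuing the solution of \eqref{linear} through a right-dense accumulation point of scattered points --- is precisely the step left undone.
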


An analog of the Lagrange method to solve linear non-homogeneous systems holds true for time scale systems, as well.

\section{Matrix Chains}

\label{section3}
Suppose that $A: J\to \mathcal{M}_{n\times m}$, $B:J\to \mathcal{M}_{m\times n}$.
 \begin{definition}
The matrix pair $(A, B)$ is said to be $(\sigma, 1)$-properly stated if $A^{\sigma}$ and $B$ satisfy
\begin{equation}
\label{*1}
 \text{ker}\, A^{\sigma}\oplus \text{im}\,B=\mathbb{R}^m\quad\text{on}\quad   J
 \end{equation}
 and both $\ker A^{\sigma}$ and $\text{im}\,B$ are $\mathcal{C}^1$-smooth spaces.  The condition \eqref{*1} will be called $(\sigma, 1)$-transversality condition. 
 \end{definition}
 Let $(A, B)$ be a $(\sigma, 1)$-properly stated matrix pair. Then $A^{\sigma}$, $B$ and $A^{\sigma}B$ have a constant rank on $J$ and $\ker A^{\sigma}B=\ker B$  on $J$. 
By the $(\sigma, 1)$-transversality condition \eqref{*1} together with the $\mathcal{C}^1$-smoothness of $\ker A^{\sigma}$ and $\text{im}\,B$, it  follows that there is a projector $R\in \mathcal{C}^1(J)$ onto $\text{im}\,B$ and along $\text{ker}\,A^{\sigma}$
(which means that $R|_{\mbox{ker}\, A^\sigma}=0$ and $R|_{\mbox{im}\, B}=\mbox{id}|_{\mbox{im}\, B}$). Denote
$G_0=A^{\sigma}B$ and let $P_0=\Pi_0$ be a continuous projector along $\ker{G}_0$. Set
\begin{eqnarray*}
Q_0&=&M_0= I-\Pi_0,\quad C_0= C,\quad
G_1= G_0+C_0 M_0.
\end{eqnarray*}
\begin{definition} We say that the matrix $B^-$ is the $\{1, 2\}$-inverse of $B$ if
\begin{eqnarray*}
BB^-B&=& B,\quad
B^- BB^-= B^-,\quad
B^-B= \Pi_0,\quad
BB^-= R.
\end{eqnarray*}
\end{definition}
Let $N_0= \text{ker}\,G_0$.
This construction can be iterated for $i\geq 1$ in the following manner.
\begin{description}
\item[(A1)] $G_i$ have a constant rank $r_i$ on $I$.
\item[(A2)] $N_i=\text{ker}\,G_i$ satisfies
\begin{equation*}
(N_0\oplus \cdots\oplus N_{i-1})\cap N_i=\{0\}.
\end{equation*}
\end{description}
We choose a continuous projector $Q_i$ onto $N_i$ such that
\begin{description}
\item[(A3)] $Q_iQ_j=0$, $0\leq j<i$ on $I$.
\end{description}
Set
\begin{equation*}
P_i=I-Q_i,\quad \Pi_i=P_0\ldots P_i,\quad M_i=\Pi_{i-1}-\Pi_i,\quad i\geq 1.
\end{equation*}
and assume
\begin{description}
\item[(A4)] $B\Pi_i B^-\in \mathcal{C}^1(J)$.
\end{description}
Assume that $(A1)$-$(A4)$ hold. We find a matrix $C_i$ so that
\begin{eqnarray*}
C_i^{\sigma}\Pi_i^{\sigma}&=&\left(C_{i-1}^{\sigma}+C_iM_i+G_iB^{-}(B\Pi_i B^-)^{\Delta}B^{\sigma}\right)\Pi_{i-1}^{\sigma},\quad i\in \{1, \ldots, \nu-1\},\\
\mbox{and define}\\
G_i&=& G_{i-1}+C_{i-1}M_{i-1}, \quad i\geq 1.
\end{eqnarray*}
\begin{remark}
Note that the existence of a projector $Q_i$ such that $Q_i Q_j=0$, $0\leq j<i$,
relies on the fact that the condition $(A2)$ makes it possible to choose a projector $Q_i$ onto $N_i$  such that
$N_0\oplus \cdots \oplus N_{i-1}\subseteq \text{ker}\,Q_i$.
Then $\text{ker}\,Q_j\subseteq \text{ker}\,Q_i$, $j\leq i$. Since $Q_j$ projects onto $N_j$, $0\leq j<i$, we have
$\text{im}\,Q_j=N_j$.
Hence, for any $x\in \mathbb{R}^m$ and $0\leq j<i$, we have $Q_j x\in N_j\subseteq \text{ker}\,Q_i$
and then $Q_i Q_j x=0$, i.e., $Q_iQ_j=0$.
\end{remark}
\begin{definition}
Let the matrix pair $(A, B)$ be $(\sigma, 1)$-properly stated matrix pair. Then  the projectors $P_0$ and $Q_0$ are said to be $(\sigma, 1)$-admissible.
\end{definition}
\begin{definition}
A projector sequence $\{Q_0, \ldots, Q_k\}$, respectively $\{P_0, \ldots, P_k\}$, with $k\geq 1$, is said to be $(\sigma, 1)$-preadmissible up to level $k$ if $(A1)$ and $(A2)$ hold for $0\leq i\leq k$ and $(A3)$ holds for $0\leq i<k$. 
\end{definition}
\begin{definition}
A projector sequence $\{Q_0, \ldots, Q_k\}$, respectively $\{P_0, \ldots, P_k\}$, with $k\geq 1$, is said to be $(\sigma, 1)$-admissible up to level $k$ if $(A1)$, $(A2)$  and $(A3)$ hold for $0\leq i\leq k$. \vbox{\index{Admissible Projector Sequence}}
\end{definition}
\begin{definition}
The matrix pair $(A, B)$ is said to be $(\sigma, 1)$-regular with tractability index $\nu\geq 1$ if there exists a $(\sigma, 1)$-admissible projector sequence $\{Q_0, \ldots, Q_{\nu-1}\}$ so that $G_i$, $0\leq i<\nu$, are singular and $G_{\nu}$ is nonsingular $($non-degenerate$)$. 
\end{definition}
\begin{proposition}
\label{proposition3.6} Let $Q_0, \ldots, Q_k$ be a $(\sigma, 1)$-admissible up to level $k$ projector sequence on $I$. Then
\begin{equation*}
\text{ker}\,\Pi_k= N_0\oplus \cdots\oplus N_k.
\end{equation*}
\end{proposition}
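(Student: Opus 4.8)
The plan is to establish the identity by proving two inclusions, after first observing that the right-hand side is a genuine direct sum. Directness is immediate from hypothesis (A2): the condition $(N_0\oplus\cdots\oplus N_{i-1})\cap N_i=\{0\}$ for every $i\le k$ is precisely what guarantees that each element of $N_0+\cdots+N_k$ has a unique decomposition. Throughout I would use two consequences of admissibility: that $Q_j$ is a projector onto $N_j$, so that $Q_j x=x$ for $x\in N_j$ and $\text{im}\,Q_j=N_j$; and that (A3) gives $Q_iQ_j=0$ for $j<i$, whence $P_iQ_j=(I-Q_i)Q_j=Q_j$ for $j<i$.

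For the inclusion $N_0\oplus\cdots\oplus N_k\subseteq\text{ker}\,\Pi_k$, I would fix $j\le k$ and an arbitrary $x\in N_j$ and evaluate $\Pi_k x=P_0\cdots P_k x$ by applying the factors from the right. Since $x=Q_j x$, the relation $P_iQ_j=Q_j$ shows that every factor $P_i$ with $i>j$ fixes $x$; hence after applying $P_k,P_{k-1},\ldots,P_{j+1}$ the vector is still $x$. The next factor produces $P_j x=(I-Q_j)x=0$, and the remaining factors annihilate $0$, so $\Pi_k x=0$. This shows $N_j\subseteq\text{ker}\,\Pi_k$ for every $j\le k$, and the inclusion follows.

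The reverse inclusion $\text{ker}\,\Pi_k\subseteq N_0\oplus\cdots\oplus N_k$ I would prove by induction on $k$. The base case $k=0$ is immediate, since $\Pi_0=P_0$ and $\text{ker}\,P_0=\text{im}\,Q_0=N_0$. For the inductive step I assume $\text{ker}\,\Pi_{k-1}=N_0\oplus\cdots\oplus N_{k-1}$ and take $x\in\text{ker}\,\Pi_k$. Writing $\Pi_k=\Pi_{k-1}P_k$, the equation $\Pi_k x=0$ gives $P_k x\in\text{ker}\,\Pi_{k-1}=N_0\oplus\cdots\oplus N_{k-1}$. Decomposing $x=P_k x+Q_k x$ and noting $Q_k x\in\text{im}\,Q_k=N_k$, I conclude $x\in(N_0\oplus\cdots\oplus N_{k-1})+N_k=N_0\oplus\cdots\oplus N_k$, which closes the induction.

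The two inclusions, together with the directness established at the outset, yield the stated equality. I expect the only genuine subtlety to be the noncommutativity of the projectors: because $\Pi_k=P_0\cdots P_k$ is an ordered product rather than a symmetric expression, one must invoke the one-sided relations $P_iQ_j=Q_j$ (for $j<i$) in exactly the right order, which is where (A3) is essential. The rank and smoothness conditions (A1) and (A4) are not needed for this statement; only (A2) and (A3) enter.
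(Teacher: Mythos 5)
Your proposal is correct and takes essentially the same approach as the paper: the substantive inclusion $\text{ker}\,\Pi_k\subseteq N_0\oplus\cdots\oplus N_k$ is established by the same induction, with the same decomposition $x=P_kx+Q_kx$ and the same appeal to the inductive hypothesis for $\Pi_{k-1}$. The only difference is organizational: you prove the inclusion $N_0\oplus\cdots\oplus N_k\subseteq\text{ker}\,\Pi_k$ directly, collapsing the product $P_0\cdots P_k$ on each summand $N_j$ via the relation $P_iQ_j=Q_j$ for $i>j$, whereas the paper folds this direction into the induction as well; both arguments rest on exactly the same algebraic consequence $Q_iQ_j=0$ of condition (A3), so this is a mild streamlining rather than a different method.
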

\begin{proof}
We prove the statement by induction.
\begin{enumerate}
\item Let $k=1$.
\begin{enumerate}
\item Suppose that $z\in \text{ker}\,P_0 P_1$. Then
\begin{equation}
\label{3.7} P_0P_1z=0.
\end{equation}
Set
\begin{eqnarray*}
z_1&=& P_1z= (I-Q_1)z= z-Q_1z.
\end{eqnarray*}
Hence, \eqref{3.7} holds if and only if $z_1\in \text{ker}\,P_0=N_0$. Note that $Q_1z\in N_1$. Therefore
$z\in N_0\oplus N_1$. Since $z\in \text{ker}\,P_0P_1$ was arbitrarily chosen and we get that it is an element of $N_0\oplus N_1$, we arrive at the relation
\begin{equation}
\label{3.8} \text{ker}\,P_0 P_1\subseteq N_0\oplus N_1.
\end{equation}
\item Let $z\in N_0\oplus N_1$ be arbitrarily chosen. Then
\begin{equation*}
z=z_0+z_1,
\end{equation*}
where $z_0\in N_0$ and $z_1\in N_1$. Since
$\text{ker}\,P_1=N_1$, we have $P_1z_1=0$.
Next, by $z_0\in N_0$, it follows that there is $w_0\in \mathbb{R}^m$ so that
$z_0=Q_0 w_0$. Therefore
\begin{eqnarray*}
P_0P_1z&=& P_0P_1(z_0+z_1)= P_0P_1z_0+P_0P_1z_1= P_0P_1Q_0w_0= P_0(I-Q_1)Q_0 w_0\\ \\
&=& P_0(Q_0-Q_1Q_0)w_0= P_0 Q_0 w_0= 0,
\end{eqnarray*}
i.e., $z\in \text{ker}\,P_0P_1$. Since $z\in N_0\oplus N_1$ was arbitrarily chosen and we get that it is an element of $\text{ker}\,P_0P_1$, we obtain the relation
\begin{equation*}
N_0\oplus N_1\subseteq \text{ker}\,P_0P_1.
\end{equation*}
By the last inclusion and \eqref{3.8}, we obtain
\begin{equation*}
\text{ker}\,P_0P_1=N_0\oplus N_1.
\end{equation*}
\end{enumerate}
\item Assume that
\begin{equation*}
\text{ker}\,P_0P_1\ldots P_i=N_0\oplus N_1\oplus\ldots \oplus N_i
\end{equation*}
for some $i\in \{1, \ldots, k-1\}$.
\item We will prove that
\begin{equation}
\label{3.10} \text{ker}\,P_0P_1\ldots P_{i+1}=N_0\oplus N_1\oplus\ldots \oplus N_{i+1}.
\end{equation}
\begin{enumerate}
\item Let $z\in \text{ker}\,P_0P_1\ldots P_i P_{i+1}$ be arbitrarily chosen. Then
\begin{equation*}
P_0P_1\ldots P_i P_{i+1}z=0
\end{equation*}
and
\begin{eqnarray*}
z_1&:=& P_{i+1}z= (I-Q_{i+1})z= z-Q_{i+1}z.
\end{eqnarray*}
Note that
\begin{equation*}
P_0P_1\ldots P_iz_1=0.
\end{equation*}
Therefore
\begin{eqnarray*}
z_1&\in& \text{ker}\,P_0\ldots P_i= N_0\oplus \ldots \oplus N_i.
\end{eqnarray*}
Next, $Q_{i+1}z\in N_{i+1}$. Consequently
\begin{eqnarray*}
z&=& z_1+Q_{i+1}z\in (N_0\oplus \ldots \oplus N_i)\oplus N_{i+1}= N_0\oplus\ldots \oplus N_{i+1}.
\end{eqnarray*}
Since $z\in \text{ker}\,P_0P_1\ldots P_{i+1}$ was arbitrarily chosen and we obtain that it is an element of $N_0\oplus\ldots \oplus N_{i+1}$, we conclude that
\begin{equation}
\label{3.9} \text{ker}\,P_0\ldots P_{i+1}\subseteq N_0\oplus \ldots N_{i+1}.
\end{equation}
\item $z\in N_0\oplus \ldots \oplus N_{i+1}$ be arbitrarily chosen. Then
\begin{equation*}
z\in (N_0\oplus \ldots \oplus N_i)\oplus N_{i+1}.
\end{equation*}
Hence,
\begin{equation*}
z=z_1+z_2,
\end{equation*}
where
\begin{eqnarray*}
z_1&\in& N_0\oplus \ldots \oplus N_i= \text{ker}\,P_0\ldots P_i\subseteq \ker Q_{i+1}
\end{eqnarray*}
 and $z_2\in N_{i+1}$.  Since $\text{ker}\,P_{i+1}=N_{i+1}$, we get
 \begin{equation*}
 P_{i+1}z_2=0
 \end{equation*}
 and
 \begin{equation*}
 P_0\ldots P_i z_1=0,
 \end{equation*}
 and
 \begin{equation*}
 Q_{i+1}z_1=0.
 \end{equation*}
Therefore,
 \begin{eqnarray*}
 P_0\ldots P_{i+1}z&=& P_0\ldots P_{i+1}(z_1+z_2)= P_0 \ldots P_iP_{i+1}z_1+P_0\ldots P_{i+1}z_2\\ \\
 &=& P_0\ldots P_i P_{i+1}z_1\\ \\
 &=& P_0\ldots P_i(I-Q_{i+1})z_1= P_0\ldots P_i z_1-P_0\ldots P_i Q_{i+1}z_1= 0.
 \end{eqnarray*}
 i.e., $z\in \text{ker}\,P_0\ldots P_{i+1}$.
 Since $z\in N_0\oplus \ldots \oplus N_{i+1}$ was arbitrarily chosen and we get that it is an element of $\text{ker}\,P_0\ldots P_{i+1}$, we find
 \begin{equation*}
 N_0\oplus \ldots \oplus N_{i+1}\subseteq \text{ker}\,P_0\ldots P_{i+1}.
 \end{equation*}
Hence and by \eqref{3.9}, we get \eqref{3.10}. This completes the proof.
\end{enumerate}
\end{enumerate}
\end{proof}

\section{Properties of Matrix Chains}

In this section, we deduce some relations for $M_j$, $\Pi_j$, $P_j$, defined in Section \ref{section3}.
\begin{theorem}
\label{theorem3.95} Let $(A, B)$ be a $(\sigma, 1)$-regular matrix pair with tractability index $\nu$.  Then
\begin{equation}
\label{3.96} M_iM_j=\left\{
\begin{array}{l}
0\quad \text{if}\quad i\ne j\\ \\
M_i\quad \text{if}\quad i=j.
\end{array}
\right.
\end{equation}
\end{theorem}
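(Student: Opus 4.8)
The plan is to first rewrite the $M_i$ in a single convenient form and then reduce the whole statement to two elementary ``absorption'' identities describing how a projector $Q_i$ interacts with the partial products $\Pi_m = P_0\cdots P_m$. Since $\Pi_i = \Pi_{i-1}P_i = \Pi_{i-1}(I-Q_i)$, we have $M_i = \Pi_{i-1}-\Pi_i = \Pi_{i-1}Q_i$ for $i\ge 1$; adopting the convention $\Pi_{-1}=I$ this formula also covers $M_0=Q_0$. Thus I would argue that it suffices to understand the products $Q_i\Pi_m$, after which every case of \eqref{3.96} becomes a one-line substitution into $M_iM_j=\Pi_{i-1}Q_i\,\Pi_{j-1}Q_j$.

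The two identities I would establish are: (i) $Q_i\Pi_m = Q_i$ whenever $-1\le m\le i-1$, and (ii) $Q_i\Pi_m = 0$ whenever $m\ge i$. Both rest only on the admissibility hypotheses, namely $Q_i^2=Q_i$ (so that $Q_iP_i = Q_i(I-Q_i)=0$) and condition $(A3)$, $Q_iQ_j=0$ for $0\le j<i$ (so that $Q_iP_j = Q_i(I-Q_j)=Q_i$ for $j<i$). For (i), pushing $Q_i$ through $\Pi_m=P_0\cdots P_m$ one factor at a time leaves $Q_i$ unchanged, because every index $j\le m\le i-1$ satisfies $j<i$. For (ii), I would write $\Pi_m=\Pi_{i-1}P_iP_{i+1}\cdots P_m$, use (i) to collapse $Q_i\Pi_{i-1}=Q_i$, and then observe that $Q_iP_i=0$ annihilates the remaining product.

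With these in hand the theorem follows by direct substitution. For $i=j$, identity (i) with $m=i-1$ gives $Q_i\Pi_{i-1}=Q_i$, so $M_i^2=\Pi_{i-1}Q_iQ_i=\Pi_{i-1}Q_i=M_i$. For $i>j$, identity (i) with $m=j-1<i$ again yields $Q_i\Pi_{j-1}=Q_i$, whence $M_iM_j=\Pi_{i-1}Q_iQ_j=0$ by $(A3)$. For $i<j$, identity (ii) with $m=j-1\ge i$ gives $Q_i\Pi_{j-1}=0$, so $M_iM_j=0$ at once. This disposes of all three cases.

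The computation is entirely algebraic, so there is no analytic obstacle; the one point that requires care is that $(A3)$ furnishes $Q_iQ_j=0$ only for $j<i$ and not symmetrically. This asymmetry is exactly why the cases $i>j$ and $i<j$ must be handled by different identities, and why the ordering of the factors in $\Pi_m=P_0\cdots P_m$ (smallest index on the left) is essential: it is this ordering that lets $Q_i$ be absorbed from the left through every $P_j$ with $j<i$, while being killed as soon as it meets $P_i$. Keeping straight which side absorbs and which side annihilates is the only place where an error could plausibly creep in.
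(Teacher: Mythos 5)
Your proof is correct and follows essentially the same route as the paper: both rest on the identity $M_i=\Pi_{i-1}Q_i$ and on absorbing $Q_i$ leftward through the factors $P_j$ with $j<i$ via condition $(A3)$, until it either meets $Q_j$ (giving $0$ for $i>j$), meets $P_i$ (giving $0$ for $i<j$ since $Q_iP_i=0$), or meets $Q_i$ (giving $M_i$ for $i=j$). The only difference is organizational --- you isolate the two absorption identities $Q_i\Pi_m=Q_i$ for $m\le i-1$ and $Q_i\Pi_m=0$ for $m\ge i$ as standalone lemmas (with the tidy convention $\Pi_{-1}=I$ covering $M_0$), whereas the paper carries out the same telescoping inline within each of the three cases.
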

\begin{proof}
Note that
\begin{equation*}
M_i=\Pi_{i-1}-\Pi_i=\Pi_{i-1}-\Pi_{i-1}P_i=\Pi_{i-1}(I-P_i)=\Pi_{i-1}Q_i.
\end{equation*}
\begin{enumerate}
\item Let $i<j$. Then, using that $Q_lQ_m=0$, $l<m$, and $Q_lP_l=0$, we get
\begin{eqnarray*}
M_iM_j&=& \Pi_{i-1}Q_i \Pi_{j-1}Q_j=\Pi_{i-1}Q_iP_0P_1\ldots P_i\ldots P_{j-1}Q_j=\\ \\
&&\Pi_{i-1}Q_i(I-Q_0)P_1\ldots P_i\ldots P_{j-1}Q_j=\\ \\
&& \Pi_{i-1}Q_i P_1\ldots P_i\ldots P_{j-1}Q_j=\cdots= \Pi_{i-1}Q_i P_i\ldots P_{j-1}Q_j=0. 
\end{eqnarray*}
\item Let $i=j$. Then
\begin{equation*}
M_iM_i=\Pi_{i-1}Q_i\Pi_{i-1}Q_i=\Pi_{i-1}Q_i P_0\ldots P_{i-1}Q_i=\cdots=\Pi_{i-1}Q_i Q_i=\Pi_{i-1}Q_i=M_i.
\end{equation*}
\item Let $i>j$. Then
\begin{equation*}
M_iM_j=\Pi_{i-1}Q_i P_0\ldots P_{j-1}Q_j=\cdots= \Pi_{i-1}Q_i Q_j=0.
\end{equation*}
This completes the proof.
\end{enumerate}
\end{proof}
\begin{theorem}
\label{theorem3.97}Let $(A, B)$ be a $(\sigma, 1)$-regular matrix pair with tractability index $\nu$. Then
\begin{equation*}
\Pi_i \Pi_j=\left\{
\begin{array}{l}
\Pi_i\quad \text{if}\quad i\geq j\\ \\
\Pi_j\quad \text{if}\quad i<j.
\end{array}
\right.
\end{equation*}
\end{theorem}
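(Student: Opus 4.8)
The plan is to establish the compact identity $\Pi_i\Pi_j=\Pi_{\max(i,j)}$, which reproduces both cases of the statement, by first proving a single auxiliary absorption property and then splitting into $i\ge j$ and $i<j$. The crux is the claim that
$$\Pi_k Q_j=0\qquad\text{whenever}\quad 0\le j\le k,$$
or equivalently $\Pi_k P_j=\Pi_k$ for $j\le k$; intuitively this says that $\Pi_k=P_0\cdots P_k$ already ``contains'' all of $P_0,\dots,P_k$ and therefore absorbs any further application of them.

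To prove the auxiliary claim I would first record the elementary consequence of $(A3)$ that, for $m>j$, one has $P_mQ_j=(I-Q_m)Q_j=Q_j-Q_mQ_j=Q_j$, since $Q_mQ_j=0$ for $j<m$. Applying this to each factor of the tail $P_{j+1}\cdots P_k$ gives $P_{j+1}\cdots P_kQ_j=Q_j$, whence
$$\Pi_kQ_j=P_0\cdots P_j\bigl(P_{j+1}\cdots P_kQ_j\bigr)=P_0\cdots P_{j-1}\bigl(P_jQ_j\bigr)=0,$$
because $P_jQ_j=(I-Q_j)Q_j=0$ by idempotency of $Q_j$.

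With the absorption property in hand, the case $i\ge j$ is immediate: writing $\Pi_i\Pi_j=\Pi_i P_0P_1\cdots P_j$ and using $\Pi_iP_m=\Pi_i$ for each $m\le j\le i$, the factors $P_0,\dots,P_j$ are peeled off one at a time, leaving $\Pi_i\Pi_j=\Pi_i$. The special case $i=j$ gives the idempotency $\Pi_i\Pi_i=\Pi_i$. For the remaining case $i<j$ I would exploit the telescoping definition $\Pi_j=\Pi_i P_{i+1}\cdots P_j$, so that $\Pi_i\Pi_j=(\Pi_i\Pi_i)P_{i+1}\cdots P_j=\Pi_iP_{i+1}\cdots P_j=\Pi_j$, using the idempotency just obtained.

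The argument is essentially formal manipulation driven by $(A3)$ and the idempotency of the $Q_j$; the only place requiring genuine care is the auxiliary claim, where one must respect the non-commutativity of the $P_m$ and carry out the absorption in the correct order, namely collapsing the tail $P_{j+1}\cdots P_k$ against $Q_j$ before reaching $P_jQ_j$. I expect no other obstacle, and an alternative proof by induction on $k$ in the spirit of Proposition~\ref{proposition3.6} would work equally well.
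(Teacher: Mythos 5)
Your argument is correct, and it is genuinely different from the paper's. The paper handles the case $i=j$ by brute-force expansion of $P_0\cdots P_iP_0\cdots P_i$, the case $i>j$ by a geometric argument (it invokes Proposition~\ref{proposition3.6} to get $\ker\Pi_j=N_0\oplus\cdots\oplus N_j\subseteq\ker\Pi_i$ and then checks the identity separately on $\ker\Pi_j$ and $\mathrm{im}\,\Pi_j$), and the case $i<j$ via the inclusion $\mathrm{im}\,\Pi_j\subseteq\mathrm{im}\,\Pi_i$. You instead isolate the single algebraic absorption identity $\Pi_kQ_j=0$ (equivalently $\Pi_kP_j=\Pi_k$) for $j\le k$, proved directly from $(A3)$ and $P_jQ_j=0$, and then both cases follow by peeling factors, with $i<j$ reduced to idempotency via the telescoping $\Pi_j=\Pi_iP_{i+1}\cdots P_j$. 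Your absorption lemma is exactly the algebraic shadow of the inclusion $N_j\subseteq\ker\Pi_k$ that the paper extracts from Proposition~\ref{proposition3.6}, so the two proofs encode the same fact; what yours buys is independence from Proposition~\ref{proposition3.6} and from the kernel/image decomposition of a projector, using only $(A3)$ and idempotency of the $Q_j$ --- it is closer in spirit to the paper's own proofs of Theorems~\ref{theorem3.95} and~\ref{theorem3.108}, and it also gives a cleaner treatment of the $i=j$ case than the paper's direct expansion. The one point worth stating explicitly when you write it up is the order of operations in $P_{j+1}\cdots P_kQ_j=Q_j$ (collapse from the rightmost factor inward), which you already flag; with that, there is no gap.
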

\begin{proof}\hfill

\begin{enumerate}
\item Let $i=j$. Then
\begin{eqnarray*}
\Pi_i \Pi_i&=&P_0\ldots P_iP_0\ldots P_i=P_0\ldots P_{i-1}(I-Q_i)(I-Q_0)P_1\ldots P_i\\ \\
&=&P_0\ldots P_{i-1}(I-Q_i-Q_0+Q_i Q_0)P_1\ldots P_i\\ \\
&=& P_0\ldots P_{i-1}(P_i-Q_0)P_1\ldots P_i=P_0\ldots P_{i-1}P_i-P_0\ldots P_{i-1}Q_0P_1\ldots P_i\\ \\
&=&\Pi_i-P_0\ldots P_{i-2}(I-Q_i)Q_0 P_1\ldots P_i\\ \\
&=& \Pi_i-P_0\ldots P_{i-2}Q_0P_1\ldots P_i=\cdots= \Pi_i-P_0Q_0P_1\ldots P_i=\Pi_i.
\end{eqnarray*}
\item Let $i>j$. Take $x\in \mathbb{R}^m$ arbitrarily. Suppose that $x\in \text{ker}\,\Pi_j$. Then
$\Pi_j x=0$ and $\Pi_i \Pi_j x=0$. Note that
\begin{eqnarray*}
\text{ker}\,\Pi_j&=& N_0\oplus \cdots \oplus N_j\subseteq N_0\oplus \cdots\oplus N_i= \text{ker}\,\Pi_i.
\end{eqnarray*}
Hence, $x\in \text{ker}\,\Pi_i$ and $\Pi_i x=0$. Therefore
\begin{equation*}
\Pi_i \Pi_j x=\Pi_i x.
\end{equation*}
Let $x\in \text{im}\,\Pi_j$. Then $\Pi_j x=x$ and
\begin{equation*}
\Pi_i \Pi_j x=\Pi_i x.
\end{equation*}
Since $x\in \mathbb{R}^m$ was arbitrarily chosen, we conclude that
\begin{equation*}
\Pi_i \Pi_j =\Pi_i.
\end{equation*}
\item Let $i<j$. Take $x\in \mathbb{R}^m$ arbitrarily. Then
\begin{equation*}
\text{im}\,\Pi_j\subseteq \text{im}\,\Pi_i.
\end{equation*}
Hence,
\begin{equation*}
\Pi_i \Pi_j=\Pi_j.
\end{equation*}
This completes the proof.
\end{enumerate}
\end{proof}
\begin{theorem}
\label{theorem3.99} Let $(A, B)$ be a $(\sigma, 1)$-regular matrix pair with tractability index $\nu$. Then
\begin{equation}
\label{3.100} \Pi_i M_j=\left\{
\begin{array}{l}
0\quad \text{if}\quad i\geq j\\ \\
M_j\quad \text{if}\quad i<j.
\end{array}
\right.
\end{equation}
\end{theorem}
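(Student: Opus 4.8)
The plan is to reduce the entire statement to the multiplication rule for the $\Pi$'s already established in Theorem \ref{theorem3.97}, combined with the representation $M_j=\Pi_{j-1}-\Pi_j$ recorded at the outset of the proof of Theorem \ref{theorem3.95}. For $j\geq 1$ the factor $\Pi_{j-1}$ is well defined, and substituting the identity into $\Pi_i M_j$ gives
$$\Pi_i M_j=\Pi_i\Pi_{j-1}-\Pi_i\Pi_j,$$
so that everything is governed by comparing the index $i$ with $j-1$ and with $j$. No new structural property of the projectors is needed beyond Theorem \ref{theorem3.97}.

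First I would treat the case $i\geq j$. Here $i\geq j>j-1$, so both $i\geq j-1$ and $i\geq j$ hold, and Theorem \ref{theorem3.97} yields $\Pi_i\Pi_{j-1}=\Pi_i$ together with $\Pi_i\Pi_j=\Pi_i$. Consequently $\Pi_i M_j=\Pi_i-\Pi_i=0$, which is exactly the first branch of \eqref{3.100}.

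Next I would treat the case $i<j$, that is $i\leq j-1$. For the second product, the strict inequality $i<j$ gives $\Pi_i\Pi_j=\Pi_j$ directly by Theorem \ref{theorem3.97}. For the first product one must split $i\leq j-1$ into the subcases $i<j-1$ and $i=j-1$: in the former Theorem \ref{theorem3.97} gives $\Pi_i\Pi_{j-1}=\Pi_{j-1}$, while in the latter the equal-index branch of the same theorem gives $\Pi_{j-1}\Pi_{j-1}=\Pi_{j-1}$. In either subcase $\Pi_i\Pi_{j-1}=\Pi_{j-1}$, so
$$\Pi_i M_j=\Pi_{j-1}-\Pi_j=M_j,$$
which is the second branch of \eqref{3.100}.

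The argument is essentially mechanical once the identity $M_j=\Pi_{j-1}-\Pi_j$ is in place, and I do not anticipate a genuine obstacle. The only point requiring a moment of care is the boundary value $i=j-1$ in the case $i<j$: there one cannot invoke the strict-inequality branch of Theorem \ref{theorem3.97} for the factor $\Pi_i\Pi_{j-1}$, and must instead appeal to the equal-index (idempotence-type) relation $\Pi_{j-1}\Pi_{j-1}=\Pi_{j-1}$. Isolating this boundary subcase explicitly is what makes the case distinction clean.
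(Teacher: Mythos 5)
Your proposal is correct and follows essentially the same route as the paper: both write $M_j=\Pi_{j-1}-\Pi_j$ and reduce everything to the multiplication rule of Theorem \ref{theorem3.97}, with your treatment merely making the boundary subcase $i=j-1$ explicit where the paper leaves it implicit.
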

\begin{proof}
We get
\begin{eqnarray*}
\Pi_iM_j&=&\Pi_i(\Pi_{j-1}-\Pi_j)= \Pi_i \Pi_{j-1}-\Pi_i \Pi_j.
\end{eqnarray*}
\begin{enumerate}
\item Let $i\geq j$. Then
\begin{eqnarray*}
\Pi_i M_j&=& \Pi_i-\Pi_i= 0.
\end{eqnarray*}
\item Let $i<j$. Then
\begin{eqnarray*}
\Pi_i M_j&=& \Pi_{j-1}-\Pi_j= M_j.
\end{eqnarray*}
This completes the proof.
\end{enumerate}
\end{proof}
\begin{theorem}
Let $(A, B)$ be a $(\sigma, 1)$-regular matrix pair with tractability index $\nu$. Then
\begin{equation}
\label{3.102} M_i\Pi_j=\left\{
\begin{array}{l}
M_i\quad \text{if}\quad i>j\\ \\
0\quad \text{if}\quad i\leq j.
\end{array}
\right.
\end{equation}
\end{theorem}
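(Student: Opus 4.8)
The plan is to reduce everything to the multiplication rule for the $\Pi$'s that was just established in Theorem~\ref{theorem3.97}, namely $\Pi_i\Pi_j=\Pi_{\max(i,j)}$. The starting point is the identity $M_i=\Pi_{i-1}-\Pi_i$, which was already recorded at the outset of the proof of Theorem~\ref{theorem3.95}. Multiplying on the right by $\Pi_j$ gives
\[
M_i\Pi_j=(\Pi_{i-1}-\Pi_i)\Pi_j=\Pi_{i-1}\Pi_j-\Pi_i\Pi_j,
\]
so the whole statement follows once each of the two products on the right is evaluated by Theorem~\ref{theorem3.97}. Since $M_i$ is only defined for $i\geq 1$, the index $i-1\geq 0$ is always legitimate, and no separate degenerate case arises.

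I would then split into the two cases of the claim. First suppose $i>j$, i.e. $i\geq j+1$, so that $i-1\geq j$ as well. By Theorem~\ref{theorem3.97}, since $i-1\geq j$ we get $\Pi_{i-1}\Pi_j=\Pi_{i-1}$, and since $i\geq j$ we get $\Pi_i\Pi_j=\Pi_i$. Substituting,
\[
M_i\Pi_j=\Pi_{i-1}-\Pi_i=M_i,
\]
which is the first branch of \eqref{3.102}.

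Next suppose $i\leq j$. The only subtlety is the shift in the first index: here $i\leq j$ forces $i-1<j$ strictly, whereas the second factor satisfies $i\leq j$. Applying Theorem~\ref{theorem3.97} to each product, $\Pi_{i-1}\Pi_j=\Pi_j$ because $i-1<j$, and $\Pi_i\Pi_j=\Pi_j$ because $i\leq j$ (covering both $i<j$, where the rule gives $\Pi_j$, and the boundary $i=j$, where it gives $\Pi_i=\Pi_j$). Hence
\[
M_i\Pi_j=\Pi_j-\Pi_j=0,
\]
which is the second branch. The argument is entirely algebraic and requires no new facts about the kernels $N_i$; the only point demanding care—and the closest thing to an obstacle—is the boundary value $i=j$, where one must notice that the two factors land on opposite sides of the inequality threshold ($i-1<j$ but $i=j$) yet both products still collapse to $\Pi_j$, so the difference vanishes as claimed.
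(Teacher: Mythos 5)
Your proposal is correct and follows exactly the same route as the paper: write $M_i=\Pi_{i-1}-\Pi_i$, distribute $\Pi_j$ on the right, and evaluate both products via Theorem~\ref{theorem3.97}. Your treatment is in fact slightly more careful than the paper's at the boundary $i=j$, where the two factors fall on opposite sides of the $\max$ threshold but both still collapse to $\Pi_j$.
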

\begin{proof}
We find
\begin{eqnarray*}
M_i\Pi_j&=& (\Pi_{i-1}-\Pi_i)\Pi_j= \Pi_{i-1}\Pi_j -\Pi_i \Pi_j.
\end{eqnarray*}
\begin{enumerate}
\item Let $i>j$. Then
\begin{eqnarray*}
M_i \Pi_j&=& \Pi_{i-1}-\Pi_i= M_i.
\end{eqnarray*}
\item Let $i\leq j$. Then
\begin{eqnarray*}
M_i\Pi_j&=& \Pi_j-\Pi_j= 0.
\end{eqnarray*}
This completes the proof.
\end{enumerate}
\end{proof}
\begin{theorem}
\label{theorem3.103} Let $(A, B)$ be a $(\sigma, 1)$-regular matrix pair with tractability index $\nu$. Then
\begin{equation}
\label{3.104} M_i Q_j=\left\{
\begin{array}{l}
0\quad \text{if}\quad i>j\\ \\
M_i\quad \text{if}\quad i=j.
\end{array}
\right.
\end{equation}
\end{theorem}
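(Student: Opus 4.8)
The plan is to reduce the entire statement to the single representation $M_i=\Pi_{i-1}Q_i$, which was already established at the very beginning of the proof of Theorem \ref{theorem3.95}, combined with the admissibility property (A3) and the idempotency of the projectors $Q_j$. Indeed, multiplying this representation on the right by $Q_j$ gives
\begin{equation*}
M_iQ_j=\Pi_{i-1}Q_iQ_j,
\end{equation*}
so that the whole claim reduces to understanding the product $Q_iQ_j$ in the two regimes $i=j$ and $i>j$. No smoothness, rank, or transversality input is needed beyond what is already guaranteed for a $(\sigma,1)$-regular pair; the argument is purely algebraic.

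First I would dispatch the case $i=j$. Since $Q_i$ is a projector onto $N_i$, it is idempotent, $Q_i^2=Q_i$, and hence
\begin{equation*}
M_iQ_i=\Pi_{i-1}Q_iQ_i=\Pi_{i-1}Q_i=M_i,
\end{equation*}
which is exactly the second line of \eqref{3.104}.

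Next I would handle the case $i>j$. Here $0\le j<i$, so the admissibility condition (A3) applies and yields $Q_iQ_j=0$. Substituting this into the product above gives
\begin{equation*}
M_iQ_j=\Pi_{i-1}(Q_iQ_j)=\Pi_{i-1}\cdot 0=0,
\end{equation*}
which is the first line of \eqref{3.104}.

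I do not anticipate a genuine obstacle: the two facts that must be in place are the identity $M_i=\Pi_{i-1}Q_i$ (which holds because $\Pi_i=\Pi_{i-1}P_i=\Pi_{i-1}(I-Q_i)$, whence $M_i=\Pi_{i-1}-\Pi_i=\Pi_{i-1}Q_i$) and the orthogonality relation $Q_iQ_j=0$ for $j<i$, which is precisely (A3) for a $(\sigma,1)$-admissible sequence. Both are already available under the hypothesis that $(A,B)$ is $(\sigma,1)$-regular with tractability index $\nu$, so the closest thing to a ``hard part'' is merely bookkeeping: making sure the indices $i,j$ stay within the admissible range $0\le j<i\le \nu-1$ so that (A3) may legitimately be invoked.
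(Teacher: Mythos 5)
Your proposal is correct and follows essentially the same route as the paper's own proof: both reduce the claim to $M_iQ_j=\Pi_{i-1}Q_iQ_j$ via the identity $M_i=\Pi_{i-1}Q_i$, then use idempotency of $Q_i$ for $i=j$ and the admissibility relation $Q_iQ_j=0$ for $j<i$. Nothing further is needed.
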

\begin{proof}
We have
\begin{eqnarray*}
M_i Q_j&=& \Pi_{i-1}Q_iQ_j.
\end{eqnarray*}
\begin{enumerate}
\item Let $i>j$. Then, applying that $Q_i Q_j=0$, 
we arrive at
\begin{eqnarray*}
M_i Q_j&=& \Pi_{i-1}Q_i Q_j= 0.
\end{eqnarray*}
\item Let $i=j$. Then
\begin{eqnarray*}
M_i Q_i&=& \Pi_{i-1}Q_i Q_i= \Pi_{i-1}Q_i= M_i.
\end{eqnarray*}
This completes the proof.
\end{enumerate}
\end{proof}
\begin{corollary}
We have
\begin{equation}
\label{3.104.1}
M_i P_i=0.
\end{equation}
\end{corollary}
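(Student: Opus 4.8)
The plan is to reduce the identity $M_iP_i=0$ to facts already established, using the defining complementarity $P_i=I-Q_i$. First I would substitute this definition directly into the product, obtaining
\begin{equation*}
M_iP_i=M_i(I-Q_i)=M_i-M_iQ_i.
\end{equation*}
Then I would invoke the diagonal case $i=j$ of Theorem \ref{theorem3.103}, namely $M_iQ_i=M_i$, so that the two terms cancel and the product vanishes. This is the most economical route, since the corollary sits immediately after that theorem and is clearly intended as its consequence.

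Alternatively, one can argue directly from the internal structure of $M_i$ without appealing to the preceding theorem. Recall from the proof of Theorem \ref{theorem3.95} the representation $M_i=\Pi_{i-1}Q_i$. Since $Q_i$ is a projector it is idempotent, $Q_i^2=Q_i$, and therefore
\begin{equation*}
M_iP_i=\Pi_{i-1}Q_i(I-Q_i)=\Pi_{i-1}(Q_i-Q_i^2)=\Pi_{i-1}(Q_i-Q_i)=0.
\end{equation*}
This second form makes transparent that the statement is nothing more than the idempotency of $Q_i$ seen through the factor $\Pi_{i-1}$.

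There is essentially no obstacle here: the corollary is a one-line consequence of $P_i+Q_i=I$ together with either $M_iQ_i=M_i$ or the idempotency $Q_i^2=Q_i$. The only points requiring a little care are to select the correct case of Theorem \ref{theorem3.103} (the diagonal case $i=j$ giving $M_iQ_i=M_i$, rather than the strict case $i>j$ giving $0$), and to keep in mind that $P_i$ and $Q_i$ are complementary projectors precisely by the definition $P_i=I-Q_i$ introduced in Section \ref{section3}.
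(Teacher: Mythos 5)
Your first argument is exactly the paper's proof: substitute $P_i=I-Q_i$ and cancel using $M_iQ_i=M_i$ from Theorem \ref{theorem3.103}. Both it and your alternative via $M_i=\Pi_{i-1}Q_i$ and the idempotency of $Q_i$ are correct; no issues.
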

\begin{proof}
By Theorem \ref{theorem3.103}, we have
\begin{equation*}
M_i Q_i=M_i.
\end{equation*}
Then
\begin{eqnarray*}
M_i P_i&=& M_i(I-Q_i)= M_i-M_i Q_i= M_i-M_i= 0.
\end{eqnarray*}
This completes the proof.
\end{proof}
\begin{theorem}
\label{theorem3.105} Let $(A, B)$ be a $(\sigma, 1)$-regular matrix pair with tractability index $\nu$. Then
\begin{equation}
\label{3.106} Q_i M_i=Q_i.
\end{equation}
\end{theorem}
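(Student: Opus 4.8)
The plan is to reduce everything to the single identity $M_i=\Pi_{i-1}Q_i$, which already appears at the very start of the proof of Theorem~\ref{theorem3.95}. Substituting it into the left-hand side of the claim gives $Q_iM_i=Q_i\Pi_{i-1}Q_i$, so the whole statement hinges on understanding the product $Q_i\Pi_{i-1}$.

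My first step would be to establish that $Q_i\Pi_{i-1}=Q_i$. Writing $\Pi_{i-1}=P_0P_1\cdots P_{i-1}$ with $P_j=I-Q_j$, I would peel off the factors one at a time from the left. For each index $j$ with $0\le j<i$, condition $(A3)$ gives $Q_iQ_j=0$, and hence $Q_iP_j=Q_i(I-Q_j)=Q_i-Q_iQ_j=Q_i$. Applying this successively to $P_0,P_1,\dots,P_{i-1}$ collapses the entire product from the left, so that $Q_i\Pi_{i-1}=Q_iP_0P_1\cdots P_{i-1}=Q_iP_1\cdots P_{i-1}=\cdots=Q_i$.

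The second step is then immediate: multiplying the relation $Q_i\Pi_{i-1}=Q_i$ on the right by $Q_i$ and using that $Q_i$ is a projector, so $Q_i^2=Q_i$, yields
\[
Q_iM_i=Q_i\Pi_{i-1}Q_i=Q_iQ_i=Q_i,
\]
which is exactly \eqref{3.106}.

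There is no genuine obstacle here; the argument is short and essentially formal. The only points to be careful about are that the peeling step invokes $(A3)$ precisely in the regime $0\le j<i$ where orthogonality $Q_iQ_j=0$ is guaranteed, and that the rightmost factor $Q_i$ in $Q_i\Pi_{i-1}Q_i$ is left untouched until the final idempotency step. Everything beyond that is routine bookkeeping.
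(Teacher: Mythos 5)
Your argument is correct and is essentially the proof given in the paper: both start from $M_i=\Pi_{i-1}Q_i$, peel off the factors of $\Pi_{i-1}=P_0\cdots P_{i-1}$ using $Q_iQ_j=0$ for $j<i$ from condition $(A3)$, and finish with the idempotency $Q_iQ_i=Q_i$. The only difference is cosmetic: you isolate the intermediate identity $Q_i\Pi_{i-1}=Q_i$ as a separate step, whereas the paper carries the trailing factor $Q_i$ through the whole chain of equalities.
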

\begin{proof}
We have
\begin{eqnarray*}
Q_i M_i&=& Q_i \Pi_{i-1}Q_i= Q_i P_0 P_1\ldots P_{i-1}Q_i=Q_i(I-Q_0)P_1\ldots P_{i-1}Q_i\\ \\
&=& (Q_i -Q_i Q_0)P_1\ldots P_{i-1}Q_i= Q_i P_1\ldots P_{i-1}Q_i=\cdots= Q_i Q_i= Q_i.
\end{eqnarray*}
This completes the proof.
\end{proof}
\begin{theorem}
\label{theorem3.108} Suppose that $(A, B)$ is a $(\sigma, 1)$-regular matrix pair with tractability index $\nu$. Then
\begin{equation}
\label{3.109} Q_jP_i P_{i-1}\ldots P_l=Q_j,\quad j>i>l.
\end{equation}
\end{theorem}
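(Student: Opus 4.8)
The plan is to reduce everything to a single elementary identity and then iterate it. The key observation is that multiplying $Q_j$ on the right by any factor $P_k$ with $k<j$ leaves $Q_j$ unchanged. Indeed, writing $P_k=I-Q_k$ gives $Q_jP_k=Q_j(I-Q_k)=Q_j-Q_jQ_k$, and since $k<j$, the orthogonality relation $(A3)$ (in the form $Q_aQ_b=0$ whenever $b<a$) applies with $a=j$, $b=k$, so $Q_jQ_k=0$ and hence $Q_jP_k=Q_j$. This is the whole engine of the proof.

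Next I would apply this identity repeatedly, reading the product $Q_jP_iP_{i-1}\ldots P_l$ from left to right and absorbing one factor at a time. Because $j>i>l$, every factor $P_m$ occurring in the product has index $m$ in the range $l\le m\le i$, and therefore $m<j$. Starting from the leftmost factor, $Q_jP_i=Q_j$ by the key identity with $k=i<j$; then $Q_jP_iP_{i-1}=Q_jP_{i-1}=Q_j$ with $k=i-1<j$; and continuing in this way each successive $P_m$ is swallowed, so that
\[
Q_jP_iP_{i-1}\ldots P_l=Q_j .
\]
Formally this is a short finite (downward) induction on the number of factors, whose inductive step is precisely the key identity invoked with $k=m<j$.

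The only point that requires any attention is the direction of the indices in $(A3)$: the vanishing of a product of two $Q$'s needs the larger index to sit on the left. In our situation $Q_j$ is always the leftmost surviving factor and each $P_m$ being absorbed carries an index $m<j$, so the hypothesis of $(A3)$ is met at every step. Consequently there is no genuine obstacle here—the statement is a direct consequence of the orthogonality relation $(A3)$ together with $P_m=I-Q_m$, and the argument terminates after exactly $i-l+1$ applications of the key identity.
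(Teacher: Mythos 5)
Your proof is correct and follows essentially the same route as the paper: write each $P_m=I-Q_m$, use the admissibility relation $Q_jQ_m=0$ for $m<j$ to absorb the factors one at a time from the left, and iterate until only $Q_j$ remains. No issues.
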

\begin{proof}
We have
\begin{eqnarray*}
Q_jP_iP_{i-1}\ldots P_l&=& Q_j(I-Q_i)P_{i-1}\ldots P_l= (Q_j -Q_j Q_i)P_{i-1}\ldots P_l\\ \\
&=& Q_jP_{i-1}\ldots P_l=\cdots = Q_j P_l=Q_j(I-Q_l)=Q_j -Q_j Q_l=Q_j.
\end{eqnarray*}
This completes the proof.
\end{proof}
\begin{theorem}
\label{theorem3.110} Let $(A, B)$ be a $(\sigma, 1)$-regular matrix pair with tractability index $\nu\geq 1$. Then
\begin{equation}
\label{3.110} P_kP_{k-1}\ldots P_i=I-Q_i-Q_{i+1}-\cdots -Q_k,\quad k\geq i.
\end{equation}
\end{theorem}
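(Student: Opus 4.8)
The plan is to argue by induction on $k$ with $i$ held fixed, building the product up one factor at a time from the left. The base case $k=i$ is immediate: the left-hand side is the single factor $P_i=I-Q_i$, which is exactly the right-hand side of \eqref{3.110}.

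For the inductive step, suppose that $P_{k-1}P_{k-2}\ldots P_i=I-Q_i-Q_{i+1}-\cdots-Q_{k-1}$ for some $k>i$. Multiplying on the left by $P_k=I-Q_k$ and distributing, I obtain
$$P_kP_{k-1}\ldots P_i = (I-Q_k)\bigl(I-Q_i-\cdots-Q_{k-1}\bigr) = I-Q_i-\cdots-Q_{k-1}-Q_k + \sum_{j=i}^{k-1}Q_kQ_j.$$
The key observation is that every cross term in the final sum vanishes: since $i\leq j\leq k-1<k$, property $(A3)$ gives $Q_kQ_j=0$ for each such $j$. Hence the sum disappears and the right-hand side collapses to $I-Q_i-\cdots-Q_k$, completing the induction.

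The only real subtlety, rather than a genuine obstacle, is the direction in which the induction is run. Condition $(A3)$ is asymmetric: it asserts $Q_iQ_j=0$ only when $j<i$, not the reverse. Building the product from the left, so that the highest-index factor $P_k$ is applied last, ensures that the cross terms produced are precisely of the form $Q_kQ_j$ with $j<k$, which are exactly the products $(A3)$ annihilates. Had I instead tried to peel off the lowest-index factor $P_i$ from the right, the resulting cross terms would be $Q_jQ_i$ with $j>i$, for which $(A3)$ gives no information; so the order of multiplication is what makes the argument go through cleanly. This same mechanism already underlies the computations in Theorems \ref{theorem3.95}--\ref{theorem3.108}, where factors $(I-Q_0)$ are repeatedly absorbed using $Q_lQ_m=0$ for $l<m$, and the present statement is essentially a clean closed form packaging that repeated cancellation.
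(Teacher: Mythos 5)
Your proof is correct and follows essentially the same route as the paper: the paper also peels off the leftmost factor $P_k=I-Q_k$ and kills the resulting terms $Q_kP_{k-1}\ldots P_i$ via Theorem \ref{theorem3.108} (itself just repeated application of $Q_kQ_j=0$ for $j<k$), which is exactly the cancellation you perform directly from $(A3)$. Your version merely packages the paper's ``$\cdots$'' chain as a formal induction and inlines the content of \eqref{3.109}; your remark about the direction of the induction being forced by the asymmetry of $(A3)$ is accurate and worth keeping.
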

\begin{proof}
For $k=i$, we have
\begin{equation*}
P_k=I-Q_k.
\end{equation*}
Let $k>i$. Applying \eqref{3.109}, we get
\begin{eqnarray*}
P_kP_{k-1}\ldots P_i&=& P_k P_{k-1}P_{k-2}\ldots P_i=(I-Q_k)P_{k-1}P_{k-2}\ldots P_i\\ \\
&=& P_{k-1}P_{k-2}\ldots P_i-Q_k P_{k-1}P_{k-2}\ldots P_i= (I-Q_{k-1})P_{k-2}\ldots P_i-Q_k=\cdots\\ \\
&=&  P_i-Q_{i+1}-Q_{i+2}-\cdots -Q_k=I-Q_i-Q_{i+1}-\cdots -Q_k.
\end{eqnarray*}
This completes the proof.
\end{proof}
\begin{theorem}
\label{theorem3.111} Let $(A, B)$ be a $(\sigma, 1)$-regular matrix pair with tractability index $\nu\geq 1$. Then
\begin{equation}
\label{3.112} G_iP_{i-1}=G_{i-1},\quad i\in \{1, \ldots, \nu\}.
\end{equation}
\end{theorem}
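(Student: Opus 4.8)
The plan is to read the identity straight off the defining recursion for the matrix chain together with two elementary facts already established in the excerpt, with no induction needed. First I would start from the defining relation $G_i = G_{i-1} + C_{i-1} M_{i-1}$ and multiply it on the right by $P_{i-1}$, obtaining
\[
G_i P_{i-1} = G_{i-1} P_{i-1} + C_{i-1} M_{i-1} P_{i-1}.
\]
The last term vanishes immediately: applying Corollary \eqref{3.104.1} with index $i-1$ gives $M_{i-1} P_{i-1} = 0$, so that $C_{i-1} M_{i-1} P_{i-1} = 0$. This reduces the whole claim to the single identity $G_{i-1} P_{i-1} = G_{i-1}$.

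For this remaining identity I would invoke the fact that, by construction, $Q_{i-1}$ is a projector onto $N_{i-1} = \text{ker}\, G_{i-1}$. Hence $\text{im}\, Q_{i-1} = N_{i-1} = \text{ker}\, G_{i-1}$, which means $G_{i-1} Q_{i-1} x = 0$ for every $x \in \mathbb{R}^m$, i.e. $G_{i-1} Q_{i-1} = 0$. Writing $P_{i-1} = I - Q_{i-1}$ then gives $G_{i-1} P_{i-1} = G_{i-1} - G_{i-1} Q_{i-1} = G_{i-1}$. Combining the two steps yields $G_i P_{i-1} = G_{i-1} P_{i-1} = G_{i-1}$ for every $i \in \{1, \ldots, \nu\}$.

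The computation is short and essentially self-contained; the only point that warrants a word of care is the endpoint $i = 1$, where $P_0 = I - Q_0$ and $M_0 = Q_0$. There one must check that both ingredients still apply: $M_0 P_0 = Q_0(I - Q_0) = 0$ because $Q_0$ is idempotent, and $G_0 Q_0 = 0$ because $Q_0 = I - \Pi_0$ projects onto $N_0 = \text{ker}\, G_0$. I expect the main (and essentially only) obstacle to be verifying that Corollary \eqref{3.104.1} and the projector identity $G_{i-1} Q_{i-1} = 0$ are legitimately invoked over the full index range $0 \le i-1 \le \nu-1$; this is precisely where the $(\sigma,1)$-admissibility of the sequence $\{Q_0, \ldots, Q_{\nu-1}\}$ guarantees that each $Q_{i-1}$ is a well-defined projector onto $N_{i-1}$, so no gap arises.
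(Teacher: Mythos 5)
Your proposal is correct and follows essentially the same route as the paper: both arguments rest on the two observations $G_{i-1}Q_{i-1}=0$ (since $Q_{i-1}$ projects onto $N_{i-1}=\ker G_{i-1}$) and the vanishing of the correction term after right-multiplication by $P_{i-1}$. The only cosmetic difference is that you annihilate $C_{i-1}M_{i-1}P_{i-1}$ via the identity $M_{i-1}P_{i-1}=0$ from the corollary \eqref{3.104.1}, whereas the paper writes the chain step as $G_i=G_{i-1}+C_{i-1}Q_{i-1}$ and uses $Q_{i-1}P_{i-1}=0$; both reductions are immediate and equivalent.
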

\begin{proof}
Fix $i\in \{1, \ldots, \nu\}$. Since
\begin{eqnarray*}
\text{ker}\,M_{i-1}&=& \text{ker}\,Q_{i-1}=\text{im}\,P_{i-1}= \text{im}\,G_{i-1}.
\end{eqnarray*}
Therefore
\begin{equation*}
G_{i-1}Q_{i-1}=0
\end{equation*}
and
\begin{eqnarray*}
G_{i-1}P_{i-1}&=& G_{i-1}(I-Q_{i-1})= G_{i-1}-G_{i-1}Q_{i-1}= G_{i-1}.
\end{eqnarray*}
Now, using that
\begin{equation*}
G_i=G_{i-1}+C_{i-1}Q_{i-1},
\end{equation*}
we find
\begin{eqnarray*}
G_i P_{i-1}&=& (G_{i-1}+C_{i-1}Q_{i-1})P_{i-1}= G_{i-1}P_{i-1}+C_{i-1}Q_{i-1}P_{i-1}= G_{i-1}.
\end{eqnarray*}
This completes the proof.
\end{proof}
\begin{theorem}
\label{theorem3.113} Let $(A, B)$ be a $(\sigma, 1)$-regular  matrix pair with tractability index $\nu\geq 1$. Then
\begin{equation}
\label{3.114} G_0=G_iP_{i-1}\ldots P_0,\quad i\in \{1, \ldots, \nu\}
\end{equation}
and
\begin{equation}
\label{3.115} G_i=G_{\nu}P_{\nu-1}\ldots P_i,\quad i\in \{0, \ldots, \nu\}.
\end{equation}
\end{theorem}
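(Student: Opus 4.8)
The plan is to derive both identities as iterated (telescoping) consequences of Theorem~\ref{theorem3.111}, which supplies the single-step relation $G_j P_{j-1} = G_{j-1}$ valid for every $j \in \{1, \ldots, \nu\}$. The key observation is that in each of the products appearing in \eqref{3.114} and \eqref{3.115} the projectors occur in strictly decreasing order of index, so the leftmost factor pair always has exactly the shape $G_j P_{j-1}$ to which Theorem~\ref{theorem3.111} applies.

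First I would prove \eqref{3.114} by induction on $i$ (equivalently, by peeling off one projector at a time). Fixing $i \in \{1, \ldots, \nu\}$ and starting from the right-hand side $G_i P_{i-1} P_{i-2} \cdots P_0$, I group the leftmost pair and apply Theorem~\ref{theorem3.111} with index $i$ to replace $G_i P_{i-1}$ by $G_{i-1}$, obtaining $G_{i-1} P_{i-2} \cdots P_0$. Repeating this step lowers the $G$-index by one and removes one projector at each stage; after $i$ applications one reaches $G_0$. Every application invokes Theorem~\ref{theorem3.111} at an index lying in $\{1, \ldots, i\} \subseteq \{1, \ldots, \nu\}$, so its hypotheses hold throughout.

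The identity \eqref{3.115} is handled by the same telescoping, now starting from $G_\nu$. For $i = \nu$ the product $P_{\nu-1} \cdots P_i$ is empty, so \eqref{3.115} reads $G_\nu = G_\nu$ and holds trivially. For $i < \nu$, starting from $G_\nu P_{\nu-1} \cdots P_i$ I apply Theorem~\ref{theorem3.111} successively at indices $\nu, \nu-1, \ldots, i+1$, each step replacing $G_{j} P_{j-1}$ by $G_{j-1}$, until I arrive at $G_{i+1} P_i = G_i$. The indices used range over $\{i+1, \ldots, \nu\} \subseteq \{1, \ldots, \nu\}$, again all admissible.

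I do not expect a genuine obstacle here: both statements are purely formal consequences of the one-step relation in Theorem~\ref{theorem3.111}, and the only points demanding care are bookkeeping --- ensuring the projectors are consumed left to right in decreasing order so that each grouped pair matches $G_j P_{j-1}$ exactly, and checking that the index stays within the admissible range $\{1, \ldots, \nu\}$ at every step. As a consistency check I would note that \eqref{3.114} with $i = \nu$ and \eqref{3.115} with $i = 0$ both reduce to the single identity $G_0 = G_\nu P_{\nu-1} \cdots P_0$.
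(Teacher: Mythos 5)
Your proposal is correct and is essentially the paper's own argument: the authors likewise obtain both identities by repeatedly applying the one-step relation $G_jP_{j-1}=G_{j-1}$ from Theorem~\ref{theorem3.111}, telescoping $G_0=G_1P_0=G_2P_1P_0=\cdots=G_iP_{i-1}\ldots P_0$ and $G_i=G_{i+1}P_i=\cdots=G_{\nu}P_{\nu-1}\ldots P_i$. The only difference is cosmetic (you peel the product from the left, they build the chain up), so there is nothing further to add.
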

\begin{proof}
We apply \eqref{3.112} and we find
\begin{eqnarray*}
G_0&=& G_1P_0= G_2P_1P_0= G_3P_2P_1P_0=\cdots =G_i P_{i-1}\ldots P_0.
\end{eqnarray*}
Next, for $i\in \{0, \ldots, \nu\}$, we obtain
\begin{eqnarray*}
G_i&=& G_{i+1}P_i= G_{i+2}P_{i+1}P_i=\cdots= G_{\nu}P_{\nu-1}\ldots P_i.
\end{eqnarray*}
This completes the proof.
\end{proof}
\begin{theorem}
\label{theorem3.115} Let $(A, B)$ be a $(\sigma, 1)$-regular matrix pair with tractability index $\nu\geq 1$. Then
\begin{equation}
\label{3.116} G_{\nu}^{-1}G_0=I-Q_0-\cdots - Q_{\nu-1}.
\end{equation}
Moreover,
\begin{equation}
\label{3.117} G_{\nu}^{-1}G_i=I-Q_i-\cdots -Q_{\nu-1},\quad i\in \{0, \ldots, \nu-1\}.
\end{equation}
\end{theorem}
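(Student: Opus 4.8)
The plan is to read off both identities as immediate consequences of the two factorizations already established for the matrix chain, together with the invertibility of $G_\nu$. The key observation is that equation \eqref{3.115} of Theorem \ref{theorem3.113} already expresses each $G_i$ as $G_\nu$ times a descending product of the projectors $P_j$, while Theorem \ref{theorem3.110} already evaluates such a product in terms of the $Q_j$. Thus the only new ingredient is cancelling the factor $G_\nu$ on the left after inverting it.

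First I would note that since $(A,B)$ is $(\sigma,1)$-regular with tractability index $\nu$, the matrix $G_\nu$ is by definition nonsingular, so $G_\nu^{-1}$ exists. Next I would take equation \eqref{3.115}, which gives $G_i = G_\nu P_{\nu-1}\cdots P_i$ for $i\in\{0,\ldots,\nu\}$, and multiply on the left by $G_\nu^{-1}$; using $G_\nu^{-1}G_\nu = I$ this collapses to $G_\nu^{-1}G_i = P_{\nu-1}\cdots P_i$. Finally I would apply Theorem \ref{theorem3.110} with $k=\nu-1$, which rewrites the descending product as $P_{\nu-1}\cdots P_i = I-Q_i-Q_{i+1}-\cdots-Q_{\nu-1}$, and substitute this to obtain \eqref{3.117}. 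The first claim \eqref{3.116} is then nothing but the special case $i=0$ of \eqref{3.117}, so I would state it separately only for emphasis.

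I do not expect any genuine obstacle here: the statement is essentially a corollary of Theorems \ref{theorem3.110} and \ref{theorem3.113}, and the argument is a short chain of substitutions once $G_\nu^{-1}$ is available. The only points deserving a moment's care are bookkeeping ones — confirming that the index range $i\in\{0,\ldots,\nu-1\}$ of \eqref{3.117} sits inside the range $i\in\{0,\ldots,\nu\}$ for which \eqref{3.115} is valid, and observing that when $i=0$ the product $P_{\nu-1}\cdots P_0$ is exactly the one Theorem \ref{theorem3.110} evaluates, so no empty-product or boundary convention needs to be invoked.
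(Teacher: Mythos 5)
Your proposal is correct and follows essentially the same route as the paper: both arguments left-multiply the factorization $G_i=G_{\nu}P_{\nu-1}\cdots P_i$ from Theorem \ref{theorem3.113} by $G_{\nu}^{-1}$ and then evaluate the product $P_{\nu-1}\cdots P_i$ via Theorem \ref{theorem3.110}. The only cosmetic difference is that the paper treats the case $i=0$ separately via \eqref{3.114} before handling general $i$ via \eqref{3.115}, whereas you correctly observe that \eqref{3.116} is just the $i=0$ instance of \eqref{3.117}.
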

\begin{proof}
By \eqref{3.114}, we find
\begin{equation*}
G_0 =G_{\nu}P_{\nu-1}\ldots P_0,
\end{equation*}
whereupon
\begin{equation*}
G_{\nu}^{-1}G_0=P_{\nu-1}\ldots P_0.
\end{equation*}
Now, using \eqref{3.110}, we find
\begin{equation*}
G_{\nu}^{-1}G_0=I-Q_0-\cdots - Q_{\nu-1}.
\end{equation*}
Next, for $i\in \{0, \ldots, \nu\}$, using \eqref{3.115}, we find
\begin{equation*}
G_i=G_{\nu}P_{\nu-1}\ldots P_i,
\end{equation*}
whereupon, using \eqref{3.110}, we arrive at the equalities
\begin{eqnarray*}
G_{\nu}^{-1}G_i&=& P_{\nu-1}\ldots P_i=I-Q_i-\cdots -Q_{\nu-1}.
\end{eqnarray*}
This completes the proof.
\end{proof}

\section{Decoupling of Dynamic-Algebraic Equations of Index $\geq 2$}

In this section, we consider the equation \eqref{6.1} and suppose that the matrix pair $(A, B)$ is a $(\sigma, 1)$-regular matrix pair with tractability index $\nu\geq 2$. Let $P_0, Q_0, M_0, \Pi_0, C_0, G_0, N_0, B^-$, $\ldots$, $P_{\nu}, Q_{\nu}, M_{\nu}, \Pi_{\nu}, C_{\nu}, G_{\nu}, N_{\nu}$ be as defined in Section \ref{section3}.

\subsection{A Reformulation of previously obtained results} 

Since $R=BB^-$ and $R$ is a continuous projector along $\text{ker}\,A^{\sigma}$, we get
\begin{eqnarray*}
A^{\sigma}&=& A^{\sigma}R= A^{\sigma}BB^{-}= G_0 B^-.
\end{eqnarray*}
Then, we can rewrite the equation \eqref{6.1} as follows
\begin{equation*}
A^{\sigma}BB^{-}(Bx)^{\Delta}=C^{\sigma}x^{\sigma}+f
\end{equation*}
or
\begin{equation*}
G_0B^{-}(Bx)^{\Delta}=C^{\sigma}x^{\sigma}+f.
\end{equation*}
Now, we multiply both sides of the last equation by $G_{\nu}^{-1}$ and find
\begin{equation}
\label{6.11} G_{\nu}^{-1}G_0B^{-}(Bx)^{\Delta}= G_{\nu}^{-1}C^{\sigma}x^{\sigma}+G_{\nu}^{-1}f.
\end{equation}
By \eqref{3.116}, we have
\begin{equation*}
G_{\nu}^{-1}G_0= I-Q_0-\cdots - Q_{\nu-1}.
\end{equation*}
Therefore \eqref{6.11} takes the form
\begin{equation}
\label{4.12} (I-Q_0-\cdots-Q_{\nu-1})B^{-}(Bx)^{\Delta}=G_{\nu}^{-1}C^{\sigma} x^{\sigma}+G_{\nu}^{-1}f.
\end{equation}
Since $\Pi_{\nu-1}$ projects along $N_0\oplus \cdots \oplus N_{\nu-1}$ and $N_i=\text{im}\,Q_i$, we have
\begin{equation*}
\Pi_{\nu-1}(I-Q_0-\cdots-Q_{\nu-1})=\Pi_{\nu-1}
\end{equation*}
and then
\begin{equation*}
B\Pi_{\nu-1}(I-Q_0-\cdots-Q_{\nu-1})=B\Pi_{\nu-1}.
\end{equation*}
Then, we multiply \eqref{4.12} by $B\Pi_{\nu-1}$ and we get
\begin{equation*}
B\Pi_{\nu-1}(I-Q_0-\cdots-Q_{\nu-1})B^{-}(Bx)^{\Delta}=B\Pi_{\nu-1}G_{\nu}^{-1}C^{\sigma} x^{\sigma}+B\Pi_{\nu-1}G_{\nu}^{-1}f
\end{equation*}
or
\begin{equation}
\label{6.13} B\Pi_{\nu-1}B^{-}(Bx)^{\Delta}=B\Pi_{\nu-1}G_{\nu}^{-1}C^{\sigma} x^{\sigma}+B\Pi_{\nu-1}G_{\nu}^{-1}f.
\end{equation}
On the other hand,
\begin{eqnarray*}
\Pi_{\nu-1}B^-B&=& \Pi_{\nu-1}\Pi_0= \Pi_{\nu-1}.
\end{eqnarray*}
Since $B\Pi_{\nu-1}B^-$ and $Bx$ are $\mathcal{C}^1$, we get
\begin{eqnarray*}
B\Pi_{\nu-1}B^{-}(Bx)^{\Delta}&=& (B\Pi_{\nu-1}B^-B x)^{\Delta}-(B\Pi_{\nu-1}B^-)^{\Delta}B^{\sigma}x^{\sigma}=\\ (B\Pi_{\nu-1}x)^{\Delta}-(B\Pi_{\nu-1}B^-)^{\Delta}B^{\sigma}x^{\sigma}.
\end{eqnarray*}
Hence, \eqref{6.13} can be rewritten in the form
\begin{equation*}
(B\Pi_{\nu-1}x)^{\Delta}-(B\Pi_{\nu-1}B^-)^{\Delta}=B\Pi_{\nu-1}G_{\nu}^{-1}C^{\sigma} x^{\sigma}+B\Pi_{\nu-1}G_{\nu}^{-1}f.
\end{equation*}
Now, we decompose $x$ as follows
\begin{equation*}
x=\Pi_{\nu-1}x+(I-\Pi_{\nu-1})x.
\end{equation*}
Then, we find
\begin{eqnarray*}
\lefteqn{(B\Pi_{\nu-1}x)^{\Delta}-(B\Pi_{\nu-1}B^-)^{\Delta}B^{\sigma}(I-\Pi_{\nu-1}^{\sigma}+\Pi_{\nu-1}^{\sigma})x^{\sigma}}\\ \\
&=& B\Pi_{\nu-1}G_{\nu}^{-1}C^{\sigma}(I-\Pi_{\nu-1}^{\sigma}+\Pi_{\nu-1}^{\sigma})x^{\sigma}+B\Pi_{\nu-1}G_{\nu}^{-1}f,
\end{eqnarray*}
or
\begin{equation}
\label{6.14}
\begin{array}{lll}
\lefteqn{(B\Pi_{\nu-1}x)^{\Delta}-(B\Pi_{\nu-1}B^-)^{\Delta}B^{\sigma}\Pi_{\nu-1}^{\sigma}x^{\sigma}-
(B\Pi_{\nu-1}B^-)^{\Delta}B^{\sigma}(I-\Pi_{\nu-1}^{\sigma})x^{\sigma}}\\ \\
&=& B\Pi_{\nu-1}G_{\nu}^{-1}C^{\sigma}\Pi_{\nu-1}^{\sigma}x^{\sigma}+B\Pi_{\nu-1}G_{\nu}^{-1}C^{\sigma}(I-\Pi_{\nu-1}^{\sigma})x^{\sigma}+B\Pi_{\nu-1}G_{\nu}^{-1}f.
\end{array}
\end{equation}
By the definition of $C_j$, we obtain
\begin{eqnarray*}
C_j^{\sigma}\Pi_j^{\sigma}&=& C_{j-1}^{\sigma}\Pi_{j-1}^{\sigma}+\left(C_j M_j+G_jB^-(B\Pi_j B^-)^{\Delta}B^{\sigma}\right)\Pi_{j-1}^{\sigma}\\ \\
&=& \left(C_{j-2}^{\sigma}\Pi_{j-2}^{\sigma}+\left(C_{j-1}M_{j-1}+G_{j-1}B^-(B\Pi_{j-1}B^-)^{\Delta}B^{\sigma}\right)\Pi_{j-2}^{\sigma}\right)\Pi_{j-1}^{\sigma}\\ \\
&&+\left(C_j M_j+G_jB^-(B\Pi_j B^-)^{\Delta}B^{\sigma}\right)\Pi_{j-1}^{\sigma}\\ \\
&=& C_{j-2}^{\sigma}\Pi_{j-2}^{\sigma}\Pi_{j-1}^{\sigma}+\left(C_{j-1} M_{j-1}+G_{j-1}B^-(B\Pi_{j-1} B^-)^{\Delta}B^{\sigma}\right)\Pi_{j-1}^{\sigma}\\ \\
&&+\left(C_j M_j+G_jB^-(B\Pi_j B^-)^{\Delta}B^{\sigma}\right)\Pi_{j-1}^{\sigma}\\ \\
&=&\cdots= C_0^{\sigma}\Pi_0^{\sigma}\ldots \Pi_{j-1}^{\sigma}+\left(\sum\limits_{i=1}^jC_i M_i+\sum\limits_{i=1}^j G_i B^-(B\Pi_i B^-)^{\Delta}B^{\sigma}\right)\Pi_{j-1}^{\sigma}\\ \\
 &=& \left(C^{\sigma}+\sum\limits_{i=1}^jC_i M_i+\sum\limits_{i=1}^j G_i B^-(B\Pi_i B^-)^{\Delta}B^{\sigma}\right)\Pi_{j-1}^{\sigma}.
\end{eqnarray*}
Observe that
\begin{eqnarray*}
\sum\limits_{i=1}^jC_i M_i&=& C_1M_1+C_2M_2+\cdots+C_j M_j\\ \\
&=& G_2-G_1+G_3-G_2+\cdots+G_{j+1}-G_j= G_{j+1}-G_1.
\end{eqnarray*}
Thus,
\begin{equation*}
C_j^{\sigma}\Pi_j^{\sigma}=\left(C^{\sigma}+G_{j+1}-G_1+\sum\limits_{i=1}^j G_iB^{-\sigma}(B\Pi_i B^-)^{\Delta}B^{\sigma}\right)\Pi_{j-1}^{\sigma}.
\end{equation*}
Hence, using that  $\Pi_j M_j= 0$ and $\Pi_{j-1}M_j= M_j$,
we arrive at the following relations
\begin{eqnarray*}
0&=& C_j^{\sigma}\Pi_j^{\sigma} M_j^{\sigma}= \left(C^{\sigma}+G_{j+1}-G_1+\sum\limits_{i=1}^j G_iB^{-}(B\Pi_i B^-)^{\Delta}B^{\sigma}\right)\Pi_{j-1}^{\sigma}M_j^{\sigma}\\ \\
&=& \left(C^{\sigma}+G_{j+1}-G_1+\sum\limits_{i=1}^j G_iB^{-}(B\Pi_i B^-)^{\Delta}B^{\sigma}\right)M_j^{\sigma}\\ \\
&=& C^{\sigma}M_j^{\sigma}+(G_{j+1}-G_1)M_j^{\sigma}+\sum\limits_{i=1}^j G_iB^{-}(B\Pi_i B^-)^{\Delta}B^{\sigma}M_j^{\sigma},
\end{eqnarray*}
which implies
\begin{equation*}
C^{\sigma}M_j^{\sigma}=-(G_{j+1}-G_1)M_j^{\sigma}-\sum\limits_{i=1}^j G_iB^{-}(B\Pi_i B^-)^{\Delta}B^{\sigma}M_j^{\sigma}.
\end{equation*}
Multiplying  the last equation with $G_{\nu}^{-1}$, we get
\begin{eqnarray*}
G_{\nu}^{-1}C^{\sigma}M_j^{\sigma}&=&-G_{\nu}^{-1}(G_{j+1}-G_1)M_j^{\sigma}-\sum\limits_{i=1}^j G_{\nu}^{-1}G_iB^{-}(B\Pi_i B^-)^{\Delta}B^{\sigma}M_j^{\sigma}\\ \\
&=& -(Q_1+\cdots Q_j)M_j^{\sigma}-\sum\limits_{i=1}^j (I-Q_i-\cdots-Q_{\nu-1})B^{-}(B\Pi_i B^-)^{\Delta}B^{\sigma}M_j^{\sigma},
\end{eqnarray*}
i.e.,
\begin{equation}
\label{6.14.1}
\begin{array}{lll}
G_{\nu}^{-1}C^{\sigma}M_j^{\sigma}&=& -(Q_1+\cdots Q_j)M_j^{\sigma}-\sum\limits_{i=1}^j (I-Q_i-\cdots-Q_{\nu-1})B^{-}(B\Pi_i B^-)^{\Delta}B^{\sigma}M_j^{\sigma}.
\end{array}
\end{equation}
Then
\begin{eqnarray*}
&&B\Pi_{\nu-1}G_{\nu}^{-1}C^{\sigma}M_j^{\sigma}\\ \\
&=&  -B\Pi_{\nu-1}(Q_1+\cdots Q_j)M_j^{\sigma}-\sum\limits_{i=1}^jB\Pi_{\nu-1} (I-Q_i-\cdots-Q_{\nu-1})B^{-}(B\Pi_i B^-)^{\Delta}B^{\sigma}M_j^{\sigma}\\ \\
 &=&-\sum\limits_{i=1}^j B\Pi_{\nu-1}B^{-}(B\Pi_i B^-)^{\Delta}B^{\sigma}M_j^{\sigma}\\ \\
&=& -\sum\limits_{i=1}^j (B\Pi_{\nu-1}B^-B\Pi_i B^-)^{\Delta}B^{\sigma}M_j^{\sigma}+ \sum\limits_{i=1}^{j}(B\Pi_{\nu-1}B^-)^{\Delta}B^{\sigma} \Pi_i^{\sigma} B^{-{\sigma}}B^{\sigma} M_j^{\sigma}\\ \\
&=& -\sum\limits_{i=1}^j (B\Pi_{\nu-1}B^-B\Pi_i B^-)^{\Delta}B^{\sigma}M_j^{\sigma}+ \sum\limits_{i=1}^{j-1}(B\Pi_{\nu-1}B^-)^{\Delta}B^{\sigma} \Pi_i^{\sigma} B^{-{\sigma}}B^{\sigma} M_j^{\sigma}\\ \\
&=& -(B\Pi_{\nu-1}B^-)^{\Delta}B^{\sigma} M_j^{\sigma}.
\end{eqnarray*}
By the last equation, we find
\begin{equation}
\label{6.15}B\Pi_{\nu-1}G_{\nu}^{-1}C^{\sigma}\sum\limits_{j=0}^{\nu-1}M_j^{\sigma}=-(B\Pi_{\nu-1}B^-)^{\Delta}B^{\sigma} \sum\limits_{j=0}^{\nu-1}M_j^{\sigma}.
\end{equation}
Observe that
\begin{eqnarray*}
\sum\limits_{j=0}^{\nu-1}M_j^{\sigma}&=& M_0^{\sigma}+M_1^{\sigma}+\cdots +M_{\nu-1}^{\sigma}\\ \\
&=& I-\Pi_0^{\sigma}+\Pi_0^{\sigma} -\Pi_1^{\sigma}+\Pi_1^{\sigma}-\Pi_2^{\sigma}+\cdots+\Pi_{\nu-2}^{\sigma}-\Pi_{\nu-1}^{\sigma}= I-\Pi_{\nu-1}^{\sigma}.
\end{eqnarray*}
From here, applying \eqref{6.15}, we get
\begin{equation*}
B\Pi_{\nu-1}G_{\nu}^{-1}C^{\sigma}(I-\Pi_{\nu-1}^{\sigma})=-(B\Pi_{\nu-1}B^-)^{\Delta}B^{\sigma} (I-\Pi_{\nu-1}^{\sigma}).
\end{equation*}
By the last relation, the equation \eqref{6.14} takes the form
\begin{eqnarray*}
(B\Pi_{\nu-1}x)^{\Delta}-(B\Pi_{\nu-1}B^-)^{\Delta}B^{\sigma}\Pi_{\nu-1}^{\sigma}x^{\sigma}&=& B\Pi_{\nu-1}G_{\nu}^{-1}C^{\sigma}\Pi_{\nu-1}^{\sigma}x^{\sigma}+B\Pi_{\nu-1}G_{\nu}^{-1}f.
\end{eqnarray*}
Set $u=B\Pi_{\nu-1}x$. Then
\begin{eqnarray*}
C^{\sigma}\Pi_{\nu-1}^{\sigma}x^{\sigma}&=& C^{\sigma}\Pi_0^{\sigma} \Pi_{\nu-1}^{\sigma}x^{\sigma}= C^{\sigma}B^{-{\sigma}}B^{\sigma}\Pi_{\nu-1}^{\sigma}x^{\sigma}= C^{\sigma}B^{-{\sigma}} u^{\sigma}.
\end{eqnarray*}
So, we arrive at the equation
\begin{equation*}
u^{\Delta}-(B\Pi_{\nu-1}B^-)^{\Delta}u^{\sigma}=B\Pi_{\nu-1}G_{\nu}^{-1}C^{\sigma}B^{-{\sigma}}u^{\sigma}+
B\Pi_{\nu-1}G_{\nu}^{-1}f,
\end{equation*}
or
\begin{equation}
\label{6.17} u^{\Delta}=(B\Pi_{\nu-1}B^-)^{\Delta}u^{\sigma}+B\Pi_{\nu-1}G_{\nu}^{-1}C^{\sigma}B^{-{\sigma}}u^{{\sigma}}
+B\Pi_{\nu-1}G_{\nu}^{-1}f.
\end{equation}
\begin{definition}
The equation \eqref{6.17} is said to be the inherent equation of the equation \eqref{6.1}. \vbox{\index{Inherent Equation}}
\end{definition}
\begin{theorem}
The subspace $\text{im}\,\Pi_{\nu-1}$ is an invariant subspace for the equation \eqref{6.17}, i.e.,
$u(t_0)\in (\text{im}\,B\Pi_{\nu-1})(t_0)$ for some $t_0\in I$ if and only if $u(t)\in (\text{im}\,B\Pi_{\nu-1})(t)$ 
for any $t\in I$.
\end{theorem}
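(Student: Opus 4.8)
The plan is to reduce the claim to showing that the component of $u$ complementary to $\text{im}\,B\Pi_{\nu-1}$ satisfies a homogeneous linear dynamic equation, and then to invoke uniqueness. First I would set $V:=B\Pi_{\nu-1}B^-$. Using $B^-B=\Pi_0$, $BB^-=R$, together with $\Pi_{\nu-1}\Pi_0=\Pi_0\Pi_{\nu-1}=\Pi_{\nu-1}$ (Theorem \ref{theorem3.97}), one checks $V^2=V$, so $V$ is a projector, and it is $\mathcal C^1$ by (A4). Moreover $V$ restricts to the identity on $\text{im}\,B\Pi_{\nu-1}$, whence $\text{im}\,V=\text{im}\,B\Pi_{\nu-1}$. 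Consequently the membership $u(t)\in(\text{im}\,B\Pi_{\nu-1})(t)$ is equivalent to $w(t):=(I-V(t))u(t)=0$, and the forward implication (the only nontrivial one) amounts to showing that $w(t_0)=0$ forces $w\equiv 0$.

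Next I would rewrite the inherent equation \eqref{6.17} as $u^\Delta=V^\Delta u^\sigma+g$, where $g:=B\Pi_{\nu-1}G_\nu^{-1}\left(C^\sigma B^{-\sigma}u^\sigma+f\right)$ lies in $\text{im}\,B\Pi_{\nu-1}=\text{im}\,V$, so that $Vg=g$ and $(I-V)g=0$. Differentiating $w=(I-V)u$ with the time-scale product rule $((I-V)u)^\Delta=(I-V)u^\Delta+(I-V)^\Delta u^\sigma$ and substituting $u^\Delta$ gives $w^\Delta=(I-V)V^\Delta u^\sigma-V^\Delta u^\sigma=-VV^\Delta u^\sigma$. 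The crucial algebraic input is the projector identity obtained by $\Delta$-differentiating $V^2=V$, namely $VV^\Delta+V^\Delta V^\sigma=V^\Delta$, i.e. $VV^\Delta=V^\Delta(I-V^\sigma)$. Substituting and using $(I-V^\sigma)u^\sigma=w^\sigma$ yields the homogeneous equation
\begin{equation*}
w^\Delta=-V^\Delta w^\sigma=-(B\Pi_{\nu-1}B^-)^\Delta w^\sigma .
\end{equation*}

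Finally I would conclude by uniqueness: the zero function solves this homogeneous linear dynamic equation and agrees with $w$ at $t_0$, so $w\equiv 0$ and hence $u(t)=V(t)u(t)\in(\text{im}\,B\Pi_{\nu-1})(t)$ for all $t$; the reverse implication is immediate by taking $t=t_0$.

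The hard part will be the uniqueness step, because on a time scale the equation $w^\Delta=-V^\Delta w^\sigma$ is \emph{implicit}: at a right-scattered point $t$ it reads $(I+\mu V^\Delta)w^\sigma=w$, i.e. $(I+V^\sigma-V)w^\sigma=w$. To pass from $w(t)=0$ to $w(\sigma(t))=0$ one needs $I+V^\sigma-V$ invertible, that is, $-V^\Delta$ regressive; this is precisely the regressivity the uniqueness theorem of Section 2 requires to propagate the zero value (and in the backward direction as well). I would exploit the extra structure $w^\sigma=(I-V^\sigma)u^\sigma\in\ker V^\sigma$, which collapses the relation to $(I-V)w^\sigma=w$ and thereby reduces the issue to the transversality $\text{im}\,V(t)\oplus\ker V(\sigma(t))=\mathbb{R}^m$. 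Establishing this splitting at right-scattered points from the constant rank and $\mathcal C^1$-smoothness of $B\Pi_{\nu-1}B^-$ (or imposing it as an explicit regressivity hypothesis on the inherent equation) is the step I expect to require the most care.
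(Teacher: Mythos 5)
Your proposal is essentially the paper's own proof: the paper likewise multiplies the inherent equation \eqref{6.17} by $I-B\Pi_{\nu-1}B^{-}$, sets $v=(I-B\Pi_{\nu-1}B^{-})u$, derives the same homogeneous implicit equation $v^{\Delta}=(I-B\Pi_{\nu-1}B^{-})^{\Delta}v^{\sigma}$ (your $w^{\Delta}=-V^{\Delta}w^{\sigma}$), and concludes $v\equiv 0$ from $v(t_0)=0$. The only substantive difference is that the uniqueness step you correctly flag as the hard part --- the equation is implicit in $v^{\sigma}$, so propagating the zero value forward needs invertibility of $I+V^{\sigma}-V$, i.e.\ the transversality $\mathrm{im}\,V(t)\cap\ker V(\sigma(t))=\{0\}$ at right-scattered points, and backward propagation needs more still --- is simply asserted by the paper without justification, so on that point your analysis is more careful than the published argument.
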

\begin{proof}
Let $u\in \mathcal{C}^1(I)$ be a solution to equation \eqref{6.17} such that
\begin{equation*}
(B\Pi_{\nu-1})(t_0)u(t_0)=u(t_0).
\end{equation*}
Hence,
\begin{eqnarray*}
u(t_0)&=& (B\Pi_{\nu-1})(t_0)u(t_0)= (B\Pi_{\nu-1}\Pi_0 \Pi_{\nu-1})(t_0)u(t_0)\\ \\
&=& (B\Pi_{\nu-1}B^-B \Pi_{\nu-1})(t_0)u(t_0)= (B\Pi_{\nu-1}B^-)(t_0) (B\Pi_{\nu-1})(t_0) u(t_0) \\ \\
&=& (B\Pi_{\nu-1}B^-)(t_0)u(t_0).
\end{eqnarray*}
We multiply the equation \eqref{6.17} by $I-B\Pi_{\nu-1}B^{-}$ and we get
\begin{eqnarray*}
&& (I-B\Pi_{\nu-1}B^{-})u^{\Delta}\\ \\
&=&(I-B\Pi_{\nu-1}B^{-})(B\Pi_{\nu-1}B^-)^{\Delta}u^{\sigma}+(I-B\Pi_{\nu-1}B^{-})B\Pi_{\nu-1}G_{\nu}^{-1}C^{\sigma}B^{-{\sigma}}u^{\sigma}\\ \\
&&+(I-B\Pi_{\nu-1}B^{-})B\Pi_{\nu-1}G_{\nu}^{-1}f= (I-B\Pi_{\nu-1}B^{-})(B\Pi_{\nu-1}B^-)^{\Delta}u^{\sigma}\\ \\
&&+(B\Pi_{\nu-1}-B\Pi_{\nu-1}B^{-}B\Pi_{\nu-1})G_{\nu}^{-1}C^{\sigma}B^{-{\sigma}}u^{\sigma}\\ \\
&&+(B\Pi_{\nu-1}-B\Pi_{\nu-1}B^{-}B\Pi_{\nu-1})G_{\nu}^{-1}f\\ \\
&=&  (I-B\Pi_{\nu-1}B^{-})(B\Pi_{\nu-1}B^-)^{\Delta}u^{\sigma}+(B\Pi_{\nu-1}-B\Pi_{\nu-1})G_{\nu}^{-1}C^{\sigma}B^{-{\sigma}}u^{\sigma}\\ \\
&&+(B\Pi_{\nu-1}-B\Pi_{\nu-1})G_{\nu}^{-1}f= (I-B\Pi_{\nu-1}B^{-})(B\Pi_{\nu-1}B^-)^{\Delta}u^{\sigma}.
\end{eqnarray*}
Set $v=(I- B\Pi_{\nu-1}B^-)u$. Then
\begin{eqnarray*}
&& v^{\Delta}= \\ \\ 
&&(I-B\Pi_{\nu-1}B^{-})u^{\Delta}+(I-B\Pi_{\nu-1}B^-)^{\Delta}u^{\sigma}\\ \\
&=& (I-B\Pi_{\nu-1}B^{-})(B\Pi_{\nu-1}B^-)^{\Delta}u^{\sigma}+(I-B\Pi_{\nu-1}B^-)^{\Delta}u^{\sigma}\\ \\
&=& \left((I-B\Pi_{\nu-1}B^-)B\Pi_{\nu-1}B^-\right)^{\Delta}u^{\sigma}-\\ \\&& (I-B\Pi_{\nu-1}B^-)^{\Delta}B^{\sigma}\Pi_{\nu-1}^{\sigma}B^{-{\sigma}}u^{\sigma}+ (I-B\Pi_{\nu-1}B^-)^{\Delta}u^{\sigma}\\ \\
&=& (B\Pi_{\nu-1}B^--B\Pi_{\nu-1}B^-B\Pi_{\nu-1}B^-)^{\Delta} u+(I-B\Pi_{\nu-1}B^-)^{\Delta}(I-B^{\sigma}\Pi_{\nu-1}^{\sigma}B^{-{\sigma}})u^{\sigma}\\ \\
&=& (I-B\Pi_{\nu-1}B^{-})^{\Delta}v^{\sigma}.
\end{eqnarray*}
Note that
\begin{eqnarray*}
v(t_0)&=& u(t_0)-(B\Pi_{\nu-1}B^-)(t_0)u(t_0)= u(t_0)-u(t_0)= 0.
\end{eqnarray*}
Therefore, we obtain the following IVP
\begin{eqnarray*}
v^{\Delta}&=&(I-B\Pi_{\nu-1}B^-)^{\Delta}v^{\sigma}\quad \text{on}\quad I,\quad v(t_0)= 0.
\end{eqnarray*}
Therefore $v=0$ on $I$ and then $B\Pi_{\nu-1}B^-u=u$ {on} $I$.
Hence, using that $\text{im}\,B\Pi_{\nu-1}=\text{im}\,B\Pi_{\nu-1}B^-$, we get $B\Pi_{\nu-1}u= u$ {on} $I$.
This completes the proof.
\end{proof}
\subsection{The Component $v_{\nu-1}^{\sigma}$}
\label{section6.4}
Consider the equation \eqref{6.11}.
Note that
\begin{eqnarray*}
M_0+M_1+\cdots+M_{\nu-1}+\Pi_{\nu-1}&=& I-\Pi_0+\Pi_0-\Pi_2+\cdots +\Pi_{\nu-2}-\Pi_{\nu-1}+\Pi_{\nu-1}=I.
\end{eqnarray*}
Then we decompose  the solution $x$ of the equation \eqref{6.11} in the form
\begin{eqnarray*}
x&=& M_0 x+M_1x+\cdots+M_{\nu-1}x+\Pi_{\nu-1}x\\ \\
&=& M_0 x+M_1x+\cdots+M_{\nu-1}x+B^-B \Pi_{\nu-1}x.
\end{eqnarray*}
Set $v_j=M_j x$,  $j\in \{0, \ldots, \nu-1\}$.
We multiply the equation \eqref{6.11} by $M_{\nu-1}$  and we find
\begin{equation}
\label{6.19} M_{\nu-1}G_{\nu}^{-1}G_0B^{-}(Bx)^{\Delta}=M_{\nu-1}G_{\nu}^{-1}C^{\sigma}x^{\sigma}
+M_{\nu-1}G_{\nu}^{-1}f.
\end{equation}
By \eqref{3.116}, we have
\begin{eqnarray*}
M_{\nu-1}G_{\nu}^{-1}G_0&=& M_{\nu-1}(I-Q_0-\cdots -Q_{\nu-1})\\ \\
&=& M_{\nu-1}-M_{\nu-1}Q_0-\cdots-M_{\nu-1}Q_{\nu-1}= M_{\nu-1}-M_{\nu-1}= 0.
\end{eqnarray*}
The equation  \eqref{6.19} can be rewritten in the following manner.
\begin{equation}
\label{6.20} M_{\nu-1}G_{\nu}^{-1}C^{\sigma}x^{\sigma}=-M_{\nu-1}G_{\nu}^{-1}f.
\end{equation}
From here,
\begin{equation}
\label{6.21}
\begin{array}{lll}
M_{\nu-1}G_{\nu}^{-1}f&=& -M_{\nu-1}G_{\nu}^{-1}C^{\sigma}(M_0^{\sigma} x^{\sigma}+M_1^{\sigma}x^{\sigma}+\cdots+M_{\nu-1}^ {\sigma}x+B^{-{\sigma}}B^{\sigma}\Pi_{\nu-1}^{\sigma}x^{\sigma}).
\end{array}
\end{equation}
Using  \eqref{6.14.1}, we arrive at
\begin{eqnarray*}
&& M_{\nu-1}G_{\nu}^{-1}C^{\sigma}M_j^{\sigma}= -M_{\nu-1}(Q_1+\cdots+Q_j)M_j^{\sigma}\\ \\
&&-\sum\limits_{i=1}^jM_{\nu-1}(I-Q_i-Q_{i+1}-\cdots-Q_{\nu-1})B(B\Pi_iB^-)^{\Delta}B^{\sigma}M_j^{\sigma}= 0,\quad j<\nu-1,
\end{eqnarray*}
and
\begin{eqnarray*}
&& M_{\nu-1}G_{\nu}^{-1}C^{\sigma}M_{\nu-1}^{\sigma}= -M_{\nu-1}(Q_1+\cdots+Q_{\nu-1})M_{\nu-1}^{\sigma}\\ \\
&&-\sum\limits_{i=1}^{\nu-1}M_{\nu-1}(I-Q_i-Q_{i+1}-\cdots-Q_{\nu-1})B(B\Pi_iB^-)^{\Delta}B^{\sigma}M_{\nu-1}^{\sigma}= -M_{\nu-1}M_{\nu-1}^{\sigma}.
\end{eqnarray*}
Then, by \eqref{6.21}, we find
\begin{equation*}
-M_{\nu-1}M_{\nu-1}^{\sigma}x^{\sigma}-M_{\nu-1}G_{\nu}^{-1}B^-B \Pi_{\nu-1}x=-M_{\nu-1}G_{\nu}^{-1}f
\end{equation*}
or
\begin{equation}
\label{6.22} M_{\nu-1}v_{\nu-1}^{\sigma}= -M_{\nu-1}G_{\nu}^{-1}C^{\sigma}B^{-{\sigma}} u^{\sigma}+M_{\nu-1}G_{\nu}^{-1}f.
\end{equation}
\subsection{The Operators $U_k$ and $V_k$}
In this section, we define an auxiliary operators $U_k$ and $V_k$ which we apply to simplify the components $v_k^{\sigma}$.
\begin{theorem}
\label{theorem3.121} The operator defined by the formula
\begin{equation}
\label{3.122} V_k=Q_k P_{k+1}\ldots P_{\nu-1}
\end{equation}
is a projector.
\end{theorem}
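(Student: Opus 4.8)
The plan is to verify directly that $V_k$ is idempotent, since an operator is a projector precisely when $V_k^2 = V_k$. Writing out the definition, I would compute
\[
V_k^2 = Q_k P_{k+1}\ldots P_{\nu-1}\, Q_k P_{k+1}\ldots P_{\nu-1},
\]
and observe that the whole argument reduces to understanding how the block $P_{k+1}\ldots P_{\nu-1}$ acts on $Q_k$ sitting to its right.

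First I would record the elementary absorption identity $P_m Q_k = Q_k$ whenever $m > k$. Indeed, $P_m Q_k = (I-Q_m)Q_k = Q_k - Q_m Q_k$, and since $m > k$ the admissibility condition $(A3)$ gives $Q_m Q_k = 0$, leaving $P_m Q_k = Q_k$. Next I would apply this identity repeatedly, starting from the rightmost factor $P_{\nu-1}$ and moving inward: because every index $k+1, k+2, \ldots, \nu-1$ strictly exceeds $k$, each factor $P_m$ is absorbed in turn, so that
\[
P_{k+1}\ldots P_{\nu-1}\, Q_k = Q_k.
\]

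Substituting this back and using that $Q_k$ is itself a projector onto $N_k$ (hence $Q_k^2 = Q_k$), I would conclude
\[
V_k^2 = Q_k\, Q_k\, P_{k+1}\ldots P_{\nu-1} = Q_k P_{k+1}\ldots P_{\nu-1} = V_k,
\]
which completes the verification.

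I do not expect a genuine obstacle here; the single point requiring care is the ordering of the factors. The absorption $P_m Q_k = Q_k$ is available only when the index of $P$ exceeds that of $Q$, since it relies on $(A3)$ in the form $Q_m Q_k = 0$ for $m > k$. The cancellation therefore propagates cleanly precisely because all the $P$'s standing to the left of $Q_k$ carry indices strictly larger than $k$; the result hinges on the specific increasing arrangement $P_{k+1}\ldots P_{\nu-1}$ built into the definition of $V_k$, and would fail for an arbitrary ordering.
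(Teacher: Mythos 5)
Your proof is correct and follows essentially the same route as the paper: both verify idempotence by collapsing $P_{k+1}\ldots P_{\nu-1}Q_k$ to $Q_k$ via the absorption identity $P_mQ_k=Q_k$ for $m>k$, which rests on condition $(A3)$. Your write-up merely makes explicit the right-to-left absorption step that the paper leaves implicit.
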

\begin{proof}
 We get
\begin{eqnarray*}
V_kV_k&=& Q_k P_{k+1}\ldots P_{\nu-1}Q_kP_{k+1}\ldots P_{\nu-1}= Q_k(P_{k+1}\ldots P_{\nu-1}Q_k)P_{k+1}\ldots P_{\nu-1}\\ \\
&=& Q_k Q_k P_{k+1}\ldots P_{\nu-1}= Q_k P_{k+1}\ldots P_{\nu-1}= V_k,
\end{eqnarray*}
i.e., $V_k$ is a projector. This completes the proof.
\end{proof}
\begin{theorem}
\label{theorem3.126} We have
\begin{equation}
\label{3.127} \text{im}\,Q_kP_{k+1}\ldots P_j=N_k,\quad \nu\geq j>k,
\end{equation}
and
\begin{equation}
\label{3.128} \text{ker}\,Q_kP_{k+1}\ldots P_j= N_0\oplus \cdots \oplus N_{k-1}\oplus N_{k+1}\oplus\cdots \oplus N_j\oplus \text{im}\,\Pi_j,\quad \nu\geq j>k.
\end{equation}
\end{theorem}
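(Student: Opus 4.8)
The plan is to treat $W:=Q_kP_{k+1}\ldots P_j$ as a single operator and to prove both \eqref{3.127} and \eqref{3.128} from three absorption identities that follow at once from $(A3)$: the hypothesis $Q_lQ_m=0$ for $m<l$, its consequence $Q_lP_m=Q_l$ for $m<l$, and $Q_lP_l=0$. First I would settle the image \eqref{3.127}. The inclusion $\text{im}\,W\subseteq\text{im}\,Q_k=N_k$ is immediate from the leading factor $Q_k$. For the reverse inclusion, take $\xi\in N_k$, so $Q_k\xi=\xi$; for each $l\in\{k+1,\ldots,j\}$ one has $P_l\xi=\xi-Q_l\xi=\xi-Q_lQ_k\xi=\xi$ because $Q_lQ_k=0$ for $l>k$. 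Hence $P_{k+1}\ldots P_j\xi=\xi$ and $W\xi=Q_k\xi=\xi$, so $\xi\in\text{im}\,W$; this gives $N_k\subseteq\text{im}\,W$ and therefore \eqref{3.127}. The same computation as in Theorem~\ref{theorem3.121} shows that $W$ is a projector, so $\mathbb{R}^m=N_k\oplus\text{ker}\,W$ and in particular $\dim\text{ker}\,W=m-\dim N_k$.

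For the kernel \eqref{3.128}, denote by $S$ the right-hand side. Combining Proposition~\ref{proposition3.6} with the projector splitting $\mathbb{R}^m=\text{ker}\,\Pi_j\oplus\text{im}\,\Pi_j$ yields the direct decomposition $\mathbb{R}^m=N_0\oplus\cdots\oplus N_j\oplus\text{im}\,\Pi_j$, so that $S$ is exactly this decomposition with the summand $N_k$ deleted. In particular $\mathbb{R}^m=N_k\oplus S$ and $\dim S=m-\dim N_k=\dim\text{ker}\,W$. Thus it suffices to prove $S\subseteq\text{ker}\,W$, i.e. that $W$ annihilates each summand of $S$, after which equality of the two subspaces will follow by the dimension count.

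I would verify the three types of summand separately. For $\xi\in N_i$ with $i<k$, the factors $P_l$ ($l>k>i$) fix $\xi$ as above, leaving $W\xi=Q_k\xi=Q_kQ_i\xi=0$. For $\xi\in N_i$ with $k<i\le j$, the factors $P_l$ with $l>i$ fix $\xi$, while the factor $P_i$ then gives $P_i\xi=\xi-Q_i\xi=0$, so $W\xi=0$. The remaining summand $\text{im}\,\Pi_j$ is the delicate one: here the key computation is the identity $Q_l\Pi_j=0$ for every $l\le j$, obtained from the iterated absorption $Q_lP_0\ldots P_{l-1}=Q_l$ followed by $Q_lP_l=0$. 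This yields $P_l\Pi_j=\Pi_j-Q_l\Pi_j=\Pi_j$ for $l\le j$, hence $P_{k+1}\ldots P_j\Pi_j=\Pi_j$, and finally $W\Pi_j=Q_k\Pi_j=0$; so every $\xi\in\text{im}\,\Pi_j$ lies in $\text{ker}\,W$.

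I expect this last summand to be the only genuine obstacle: annihilation of the $N_i$ is a one-line absorption argument, whereas $\text{im}\,\Pi_j$ forces the auxiliary lemma $Q_l\Pi_j=0$ ($l\le j$), which is exactly where the noncommutativity of the $P_l$'s and the one-sided nature of $Q_lQ_m=0$ must be handled carefully. Once $S\subseteq\text{ker}\,W$ is established, the identity $\dim S=\dim\text{ker}\,W$ closes the argument and proves \eqref{3.128}.
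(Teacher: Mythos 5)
Your proof is correct, but it follows a genuinely different route from the paper's. The paper proceeds by induction on $j$, writing $\ker\,Q_kP_{k+1}\ldots P_{l+1}=\ker\,P_{l+1}\oplus\bigl(\ker\,Q_kP_{k+1}\ldots P_l\cap\text{im}\,P_{l+1}\bigr)$ at each step and then simplifying the resulting intersections of direct sums; those intersection identities are the least transparent part of the paper's argument. You instead treat $W=Q_kP_{k+1}\ldots P_j$ globally: the image is pinned down by the two absorption computations, $W$ is shown to be a projector exactly as in Theorem~\ref{theorem3.121}, and the kernel is obtained by verifying $S\subseteq\ker W$ summand by summand (with $Q_l\Pi_j=0$ for $l\leq j$ as the key identity, which is essentially Theorem~\ref{theorem3.108} read factor by factor) and then closing with the dimension count $\dim S=m-\dim N_k=\dim\ker W$. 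What your approach buys is the complete avoidance of both the induction and the intersection lemmas, at the price of leaning on Proposition~\ref{proposition3.6} and on the projector property of $W$ to make the dimension count available --- both of which are legitimately at your disposal. The paper's induction, by contrast, is self-contained in that it never needs the rank argument, and it establishes the two formulas simultaneously for all intermediate products $Q_kP_{k+1}\ldots P_l$, $k<l\leq j$, as a by-product. Your identification of $\text{im}\,\Pi_j$ as the only delicate summand is accurate, and your handling of it is sound.
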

\begin{proof}
Again, we prove this statement by induction.
\begin{enumerate}
\item Let $j=k+1$.  Since $Q_k$ and $P_{k+1}$ are projectors, we get
\begin{eqnarray*}
\text{im}\,Q_kP_{k+1}&=& \text{im}\,Q_k= N_k.
\end{eqnarray*}
Moreover,
\begin{equation}
\label{3.129}
\begin{array}{lll}
&&\text{ker}\,Q_kP_{k+1}= \text{ker}\,P_{k+1}\oplus (\text{ker}\,Q_k\cap \text{im}\,P_{k+1})\\ \\
&=& N_{k+1}\oplus \left(\left(N_0\oplus\cdots\oplus N_{k-1}\oplus \text{im}\,\Pi_k\right)\cap \left(N_0\oplus\cdots\oplus N_k\oplus \text{im}\,\Pi_{k+1}\right)\right).
\end{array}
\end{equation}
Note that $\text{im}\,\Pi_{k+1}\subseteq \text{im}\,\Pi_k$ and
\begin{equation*}
N_0\oplus \cdots \oplus N_{k-1}\subseteq N_0\oplus \cdots N_{k-1}\oplus N_{k+1}.
\end{equation*}
Since
\begin{equation*}
\left(N_0\oplus \cdots\oplus N_{k-1}\right)\cap N_k=\{0\},
\end{equation*}
we have
\begin{equation*}
\left(N_0\oplus \cdots\oplus N_{k-1}\oplus \text{im}\,\Pi_k\right)\cap N_k=\{0\}.
\end{equation*}
Then
\begin{eqnarray*}
\left(N_0\oplus \cdots\oplus N_{k-1}\oplus \text{im}\,\Pi_{k+1}\right)\cap N_k&\subseteq& \left(N_0\oplus \cdots\oplus N_{k-1}\oplus \text{im}\,\Pi_k\right)\cap N_k=\{0\}.
\end{eqnarray*}
Therefore
\begin{equation*}
\left(N_0\oplus \cdots\oplus N_{k-1}\oplus \text{im}\,\Pi_k\right)\cap \left(N_0\oplus \cdots\oplus N_{k}\oplus \text{im}\,\Pi_{k+1}\right)=\left(N_0\oplus \cdots\oplus N_{k-1}\oplus \text{im}\,\Pi_{k+1}\right).
\end{equation*}
Now, applying \eqref{3.129}, we arrive at
\begin{equation*}
\text{ker}\,Q_k P_{k+1}=N_{k+1}\oplus \left(N_0\oplus \cdots\oplus N_{k-1}\oplus \text{im}\,\Pi_{k+1}\right).
\end{equation*}
\item Assume that $\text{im}\,Q_k P_{k+1}\ldots P_l=N_k$ 
and
\begin{equation*}
\text{ker}\,Q_k P_{k+1}\ldots P_l= N_0\oplus \cdots \oplus N_{k-1}\oplus N_{k+1}\oplus \cdots \oplus N_l\oplus \text{im}\,\Pi_l
\end{equation*}
for some $l>k$, $l<j$.
\item We will prove that $\text{im}\,Q_k P_{k+1}\ldots P_{l+1}=N_k$ and
\begin{equation*}
\text{ker}\,Q_k P_{k+1}\ldots P_{l+1}= N_0\oplus \cdots \oplus N_{k-1}\oplus N_{k+1}\oplus \cdots \oplus N_l\oplus \text{im}\,\Pi_{l+1}.
\end{equation*}
Since $Q_kP_{k+1}\ldots P_l$ {and} $P_{l+1}$ 
are projectors, we obtain
\begin{eqnarray*}
\text{im}\,Q_k P_{k+1}\ldots P_{l+1}&=& \text{im}\,Q_k P_{k+1}\ldots P_l P_{l+1}= \text{im}\,Q_k P_{k+1}\ldots P_l= N_k
\end{eqnarray*}
and
\begin{eqnarray*}
\lefteqn{\text{ker}\,Q_kP_{k+1}\ldots P_{l+1}=\text{ker}\,Q_k P_{k+1}\ldots P_l P_{l+1}}\\ \\
&=& \text{ker}\,P_{l+1}\oplus \left(\left(\text{ker}\,Q_kP_{k+1}\ldots P_l\right)\cap \text{im}\,P_{l+1}\right)\\ \\
&=& N_{l+1}\oplus \left(\left(N_0\oplus \cdots N_{k-1}\oplus N_{k+1}\oplus \cdots \oplus N_l\oplus \text{im}\,\Pi_{l}\right)\cap\left(N_0\oplus\cdots\oplus N_l\oplus \text{im}\,\Pi_{l+1}\right)\right).
\end{eqnarray*}
Note that $\text{im}\,\Pi_{l+1}\subseteq \text{im}\,\Pi_l$ and
\begin{equation*}
N_0\oplus \cdots \oplus N_l\cap N_{l+1}=\{0\}.
\end{equation*}
Then
\begin{equation*}
\left(N_0\oplus \cdots \oplus N_l\oplus \text{im}\,\Pi_{l+1}\right)\cap N_{l+1}=\{0\}
\end{equation*}
and
\begin{eqnarray*}
\lefteqn{\left(N_0\oplus \cdots N_{k-1}\oplus N_{k+1}\oplus \cdots \oplus N_l\oplus \text{im}\,\Pi_{l}\right)\cap\left(N_0\oplus\cdots\oplus N_l\oplus \text{im}\,\Pi_{l+1}\right)}\\ \\
&=& N_0\oplus \cdots \oplus N_{k-1}\oplus N_{k+1}\oplus \cdots \oplus N_l\oplus \text{im}\,\Pi_{l+1}.
\end{eqnarray*}
Therefore
\begin{equation*}
\text{ker}\,Q_kP_{k+1}\ldots P_{l+1}=N_{l+1}\oplus N_0\oplus \cdots \oplus N_{k-1}\oplus N_{k+1}\oplus \cdots \oplus N_l\oplus \text{im}\,\Pi_{l+1}.
\end{equation*}
Hence and by induction, it follows that the assertion holds for any $j>k$, $j\leq \nu$. This completes the proof.
\end{enumerate}
\end{proof}
\begin{theorem}
\label{theorem3.123} We have
\begin{equation}
\label{3.124}\text{im}\,V_k=N_k
\end{equation}
and
\begin{equation}
\label{3.125} \text{ker}\,V_k= N_0\oplus \cdots \oplus N_{k-1}\oplus N_{k+1}\oplus \cdots \oplus N_{\nu-1}\oplus \text{im}\,\Pi_{\nu-1}.
\end{equation}
\end{theorem}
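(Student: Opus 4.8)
The plan is to recognize that $V_k$ is nothing more than a specialization of the operator already analyzed in Theorem \ref{theorem3.126}. Comparing the defining formula \eqref{3.122}, namely $V_k = Q_k P_{k+1}\ldots P_{\nu-1}$, with the operator $Q_k P_{k+1}\ldots P_j$ appearing in \eqref{3.127} and \eqref{3.128}, we see that $V_k$ corresponds precisely to the choice $j = \nu-1$. Hence the whole statement should follow by substitution, the substantive induction having already been carried out in Theorem \ref{theorem3.126}.

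Concretely, for $k \le \nu-2$ we have $\nu-1 > k$, so the hypothesis $\nu \ge j > k$ of Theorem \ref{theorem3.126} is satisfied with $j = \nu-1$. First I would set $j = \nu-1$ in the image formula \eqref{3.127}, which immediately yields $\text{im}\,V_k = N_k$, that is, \eqref{3.124}. Next I would substitute $j = \nu-1$ into the kernel formula \eqref{3.128}; its right-hand side becomes $N_0\oplus\cdots\oplus N_{k-1}\oplus N_{k+1}\oplus\cdots\oplus N_{\nu-1}\oplus\text{im}\,\Pi_{\nu-1}$, which is exactly \eqref{3.125}.

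The only point requiring separate attention is the boundary index $k = \nu-1$, for which the product $P_{k+1}\ldots P_{\nu-1}$ is empty and Theorem \ref{theorem3.126} does not formally apply (it requires $j > k$). Here $V_{\nu-1} = Q_{\nu-1}$, so $\text{im}\,V_{\nu-1} = \text{im}\,Q_{\nu-1} = N_{\nu-1}$ directly, and $\text{ker}\,V_{\nu-1} = \text{ker}\,Q_{\nu-1}$. To identify this kernel with the claimed space (whose middle block $N_{k+1}\oplus\cdots\oplus N_{\nu-1}$ is now empty), I would invoke Proposition \ref{proposition3.6} together with the fact that $\Pi_{\nu-1}$ is a projector: these give the decomposition $\mathbb{R}^m = N_0\oplus\cdots\oplus N_{\nu-1}\oplus\text{im}\,\Pi_{\nu-1}$, and deleting the summand $N_{\nu-1} = \text{im}\,Q_{\nu-1}$ leaves precisely $\text{ker}\,Q_{\nu-1}$, as required.

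I do not anticipate any genuine obstacle, since all the combinatorial and direct-sum bookkeeping was already discharged in Theorem \ref{theorem3.126}; the present statement is effectively a corollary, and the sole thing to watch is the degenerate endpoint $k=\nu-1$ treated above.
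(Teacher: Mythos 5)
Your proof is correct and coincides with the paper's own argument, which likewise obtains both identities simply by setting $j=\nu-1$ in \eqref{3.127} and \eqref{3.128} of Theorem \ref{theorem3.126}. Your separate treatment of the degenerate endpoint $k=\nu-1$ (where the product $P_{k+1}\ldots P_{\nu-1}$ is empty and $V_{\nu-1}=Q_{\nu-1}$) is a sound extra precaution that the paper's two-line proof silently omits.
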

\begin{proof}
By \eqref{3.127}, it follows that
\begin{eqnarray*}
\text{im}\,V_k&=&\text{im}\,Q_kP_{k+1}\ldots P_{\nu-1}=N_k.
\end{eqnarray*}
By \eqref{3.128}, we get
\begin{eqnarray*}
 \text{ker}\,V_k&=&\text{ker}\,Q_kP_{k+1}\ldots P_{\nu-1}= N_0\oplus \cdots \oplus N_{k-1}\oplus N_{k+1}\oplus \cdots \oplus N_{\nu-1}\oplus \text{im}\,\Pi_{\nu-1}.
\end{eqnarray*}
This completes the proof.
\end{proof}
\begin{theorem}
\label{theorem3.130}  Define $U_k=M_kP_{k+1}\ldots P_{\nu-1}$. Then
\begin{equation}
\label{3.131} U_k=\Pi_{k-1}V_k,
\end{equation}
\begin{equation}
\label{3.132} \text{im}\,U_k= \text{im}\,\Pi_{k-1}\cap (N_0\oplus \cdots \oplus N_k)
\end{equation}
and
\begin{equation}
\label{3.133} \text{ker}\,U_k= N_0\oplus \cdots \oplus N_{k-1}\oplus N_{k+1}\oplus \cdots \oplus N_{\nu-1}\oplus \text{im}\,\Pi_{\nu-1}.
\end{equation}
\end{theorem}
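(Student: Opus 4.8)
The plan is to reduce every assertion to the projector $V_k$ of Theorem \ref{theorem3.123}, whose image and kernel are already computed, via the factorization \eqref{3.131}.

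First I would establish \eqref{3.131}. Since $M_k=\Pi_{k-1}Q_k$ (as recorded in the proof of Theorem \ref{theorem3.95}), substituting into the definition of $U_k$ and recalling the definition \eqref{3.122} of $V_k$ gives
$$U_k=M_kP_{k+1}\ldots P_{\nu-1}=\Pi_{k-1}Q_kP_{k+1}\ldots P_{\nu-1}=\Pi_{k-1}V_k.$$
This step is purely formal.

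Next I would treat the kernel \eqref{3.133}, which is the cleaner of the two spectral statements. The inclusion $\ker V_k\subseteq\ker U_k$ is immediate from $U_k=\Pi_{k-1}V_k$. For the reverse, suppose $U_kx=\Pi_{k-1}V_kx=0$. By Theorem \ref{theorem3.123} we have $V_kx\in\text{im}\,V_k=N_k$, while by Proposition \ref{proposition3.6} $\ker\Pi_{k-1}=N_0\oplus\cdots\oplus N_{k-1}$, which meets $N_k$ only in $\{0\}$ by the admissibility condition (A2). Hence $\Pi_{k-1}$ is injective on $N_k$, so $V_kx=0$ and $x\in\ker V_k$. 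Therefore $\ker U_k=\ker V_k$, and \eqref{3.133} follows at once from \eqref{3.125}.

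Finally, for the image \eqref{3.132}, the factorization together with the fact that $V_k$ is a projector onto $N_k$ yields $\text{im}\,U_k=\Pi_{k-1}(N_k)$, and I would identify this set by a double inclusion. For $\subseteq$: any $z\in N_k$ satisfies $\Pi_{k-1}z=z-(I-\Pi_{k-1})z$ with $z\in N_k$ and $(I-\Pi_{k-1})z\in\ker\Pi_{k-1}=N_0\oplus\cdots\oplus N_{k-1}$, so $\Pi_{k-1}z\in\text{im}\,\Pi_{k-1}\cap(N_0\oplus\cdots\oplus N_k)$. For $\supseteq$: if $y\in\text{im}\,\Pi_{k-1}\cap(N_0\oplus\cdots\oplus N_k)$, then $\Pi_{k-1}y=y$, and writing $y=y_0+\cdots+y_k$ with $y_i\in N_i$ and applying $\Pi_{k-1}$ annihilates every term with $i\le k-1$, leaving $y=\Pi_{k-1}y_k\in\Pi_{k-1}(N_k)=\text{im}\,U_k$. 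The step I expect to require the most care is the injectivity of $\Pi_{k-1}$ on $N_k$ invoked in the kernel computation: it is the sole point where the structural hypothesis (A2) is used, and it is precisely what prevents $\ker U_k$ from being strictly larger than $\ker V_k$. Once that is secured, the image identification is routine bookkeeping on the direct-sum decomposition of Proposition \ref{proposition3.6}.
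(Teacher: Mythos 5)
Your proof is correct and follows essentially the same route as the paper: factor $U_k=\Pi_{k-1}Q_kP_{k+1}\ldots P_{\nu-1}=\Pi_{k-1}V_k$ and transfer $\text{im}\,V_k=N_k$ and $\text{ker}\,V_k$ from Theorem \ref{theorem3.123} through the projector $\Pi_{k-1}$. In fact you supply the two justifications the paper asserts without argument --- that $\text{ker}\,\Pi_{k-1}V_k=\text{ker}\,V_k$ (via injectivity of $\Pi_{k-1}$ on $N_k$, which rests on $(A2)$ and Proposition \ref{proposition3.6}) and the double inclusion identifying $\Pi_{k-1}(N_k)$ --- so your write-up is, if anything, more complete.
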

\begin{proof}
We have
\begin{eqnarray*}
U_k&=& M_k P_{k+1}\ldots P_{\nu-1}= \Pi_{k-1}Q_kP_{k+1}\ldots P_{\nu-1}= \Pi_{k-1}V_k.
\end{eqnarray*}
Hence, we get
\begin{eqnarray*}
\text{im}\,U_k&=& \text{im}\,\Pi_{k-1}V_k= \text{im}\,\Pi_{k-1}\cap \left(\text{ker}\,\Pi_{k-1}\oplus \text{im}\,V_k\right)= \text{im}\,\Pi_{k-1}\cap(N_0\oplus \cdots \oplus N_{k-1}\oplus N_k)
\end{eqnarray*}
and
\begin{eqnarray*}
\text{ker}\,U_k&=& \text{ker}\,\Pi_{k-1}V_k= \text{ker}\,V_k= N_0\oplus \cdots \oplus N_{k-1}\oplus N_{k+1}\oplus \cdots \oplus N_{\nu-1}\oplus \text{im}\,\Pi_{\nu-1}.
\end{eqnarray*}
This completes the proof.
\end{proof}
\begin{theorem}
\label{theorem3.134} We have
\begin{equation*}
U_k\Pi_i=\left\{
\begin{array}{l}
U_k\quad \text{if}\quad i<k\\ \\
U_k-M_k\quad \text{if}\quad i=k\\ \\
U_k-M_kP_{k+1}\ldots P_i\quad \text{if}\quad i>k.
\end{array}
\right.
\end{equation*}
\end{theorem}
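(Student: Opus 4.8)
The plan is to reduce the whole statement to two ``absorption'' identities for $U_k$ together with the elementary factorization $\Pi_i=\Pi_{i-1}P_i$, after which the three cases follow from a one-line induction on $i$.

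First I would record the two facts that carry the argument. On the one hand, since $Q_jQ_k=0$ for $j>k$ by $(A3)$, the same telescoping used in the proof of Theorem~\ref{theorem3.121} gives $P_{k+1}\ldots P_{\nu-1}Q_k=Q_k$; combined with $M_kQ_k=M_k$ from \eqref{3.104} this yields
\[
U_kQ_k=M_kP_{k+1}\ldots P_{\nu-1}Q_k=M_kQ_k=M_k .
\]
On the other hand, for $k<i\le\nu-1$ the kernel description \eqref{3.133} of Theorem~\ref{theorem3.130} shows $N_i=\text{im}\,Q_i\subseteq\text{ker}\,U_k$, so $U_kQ_i=0$ and therefore
\[
U_kP_i=U_k(I-Q_i)=U_k ,\qquad k<i\le\nu-1 .
\]

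With these in hand the case $i<k$ is immediate: by Proposition~\ref{proposition3.6} we have $\text{ker}\,\Pi_i=N_0\oplus\cdots\oplus N_i$, and for $i<k$ this subspace is contained in $\text{ker}\,U_k$ by \eqref{3.133}; hence $U_k(I-\Pi_i)=0$, that is $U_k\Pi_i=U_k$. For the base case $i=k$ I would write $\Pi_k=\Pi_{k-1}P_k$ (with the convention $\Pi_{-1}:=I$, which also covers $k=0$). Since $U_k\Pi_{k-1}=U_k$ by the case just treated, the first absorption identity gives
\[
U_k\Pi_k=U_k\Pi_{k-1}P_k=U_kP_k=U_k-U_kQ_k=U_k-M_k .
\]
For $i>k$ I would then induct using $\Pi_i=\Pi_{i-1}P_i$. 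Assuming $U_k\Pi_{i-1}=U_k-M_kP_{k+1}\ldots P_{i-1}$ (to be read as $U_k-M_k$ when $i-1=k$), right multiplication by $P_i$ together with the second absorption identity gives
\[
U_k\Pi_i=\bigl(U_k-M_kP_{k+1}\ldots P_{i-1}\bigr)P_i=U_kP_i-M_kP_{k+1}\ldots P_i=U_k-M_kP_{k+1}\ldots P_i ,
\]
which is the asserted formula.

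The delicate points are exactly the two absorption identities and the endpoint bookkeeping: the identity $U_kQ_k=M_k$ is where $(A3)$ and \eqref{3.104} are essential, while $U_kP_i=U_k$ for $k<i\le\nu-1$ rests entirely on the kernel description \eqref{3.133}. Once both are secured, the remaining content is purely formal — the single factorization $\Pi_i=\Pi_{i-1}P_i$ plus the convention that $M_kP_{k+1}\ldots P_i$ collapses to $M_k$ at $i=k$ — so the main obstacle is organizing these reductions cleanly rather than performing any substantial computation.
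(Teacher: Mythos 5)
Your proof is correct, and it reaches the three formulas by a genuinely different route from the paper's. The paper handles each case by one direct computation: it expands $P_{k+1}\cdots P_{\nu-1}\Pi_i$ as $P_{k+1}\cdots P_{\nu-1}$ minus the telescoping sum $Q_i+Q_{i-1}P_i+\cdots+Q_0P_1\cdots P_i$, writes $M_k=\Pi_{k-1}Q_k$, and kills the unwanted terms one at a time via $Q_kQ_j=0$ for $j<k$ and $Q_kQ_k=Q_k$. You instead isolate the two absorption identities $U_kQ_k=M_k$ and $U_kP_i=U_k$ for $k<i\le\nu-1$, dispose of the case $i<k$ by the kernel containment $\text{ker}\,\Pi_i=N_0\oplus\cdots\oplus N_i\subseteq\text{ker}\,U_k$ (Proposition~\ref{proposition3.6} combined with \eqref{3.133}), and then get $i\ge k$ by a one-step induction from the factorization $\Pi_i=\Pi_{i-1}P_i$. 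Both arguments ultimately rest on the same facts --- $(A3)$ and the absorption $P_jQ_k=Q_k$ for $j>k$ --- but your organization replaces the paper's explicit expansion of $I-\Pi_i$ (whose displayed index bookkeeping in the paper contains several slips) with a short recursion, and it makes visible \emph{why} the answer changes at $i=k$: that is exactly the one index at which $U_kQ_i$ fails to vanish. The only points to make fully explicit in a final write-up are the convention $\Pi_{-1}=I$ needed for $k=0$ (which you already flag) and the restriction $i\le\nu-1$ under which \eqref{3.133} and the identity $U_kP_i=U_k$ are available.
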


\begin{proof}

\begin{enumerate}
\item Let $i<k$. Then, we find
\begin{eqnarray*}
U_k\Pi_i&=& M_kP_{k+1}\ldots P_{\nu-1}\Pi_i\\ \\
&=& M_k\bigg(P_{k+1}\ldots P_{\nu-1}-Q_i-Q_{i-1}P_i -Q_{i-1}P_{i-1}P_i-\cdots -Q_0 P_1\ldots P_i\bigg)\\ \\
&=& \Pi_{k-1}Q_k\bigg(P_{k+1}\ldots P_{\nu-1}-Q_i-Q_{i-1}P_i -Q_{i-1}P_{i-1}P_i-\cdots -Q_0 P_1\ldots P_i\bigg)\\ \\
&=& \Pi_{k-1}\bigg(Q_kP_{k+1}\ldots P_{\nu-1}-Q_kQ_i-Q_kQ_{i-1}P_i \\ \\
&-& Q_kQ_{i-1}P_{i-1}P_i-\cdots -Q_kQ_0 P_1\ldots P_i\bigg)\\ \\
&=& \Pi_{k-1}Q_k P_{k+1}\ldots P_{\nu-1}=M_kP_{k+1}\ldots P_{\nu-1}=U_k.
\end{eqnarray*}
\item Let $i=k$. Then
\begin{eqnarray*}
U_k\Pi_k&=& M_kP_{k+1}\ldots P_{\nu-1}\Pi_i\\ \\
&=& M_k\bigg(P_{k+1}\ldots P_{\nu-1}-Q_k-Q_{k-1}P_k -Q_{k-1}P_{k-1}P_k-\cdots -Q_0 P_1\ldots P_k\bigg)\\ \\
&=& \Pi_{k-1}Q_k\bigg(P_{k+1}\ldots P_{\nu-1}-Q_k-Q_{k-1}P_i -Q_{k-1}P_{k-1}P_k-\cdots -Q_0 P_1\ldots P_k\bigg)\\ \\
&=& \Pi_{k-1}\bigg(Q_kP_{k+1}\ldots P_{\nu-1}-Q_kQ_k-Q_kQ_{k-1}P_k \\ \\
&-& Q_kQ_{k-1}P_{k-1}P_k-\cdots -Q_kQ_0 P_1\ldots P_k\bigg)\\ \\
&=& \Pi_{k-1}Q_k P_{k+1}\ldots P_{\nu-1}-\Pi_{k-1}Q_k=M_kP_{k+1}\ldots P_{\nu-1}-M_k= U_k-M_k.
\end{eqnarray*}
\item Let $i>k$. Then,  we get
\begin{eqnarray*}
U_k\Pi_i&=& M_kP_{k+1}\ldots P_{\nu-1}\Pi_i\\ \\
&=& M_k\left(P_{k+1}\ldots P_{\nu-1}-Q_0 P_1\ldots P_i-Q_1P_2\ldots P_i-\cdots-Q_k P_{k+1}\ldots P_{\nu-1}\right)\\ \\
&=& \Pi_{k-1}Q_k\left(P_{k+1}\ldots P_{\nu-1}-Q_0 P_1\ldots P_i-Q_1P_2\ldots P_i-\cdots-Q_k P_{k+1}\ldots P_{\nu-1}\right)\\ \\
&=& \Pi_{k-1}\left(Q_kP_{k+1}\ldots P_{\nu-1}-Q_kQ_0 P_1\ldots P_i-Q_kQ_1P_2\ldots P_i\right.\\ \\
&-&\left. \cdots-Q_kQ_k P_{k+1}\ldots P_{\nu-1}\right)\\ \\
&=& \Pi_{k-1}\left(Q_kP_{k+1}\ldots P_{\nu-1}-Q_k P_{k+1}\ldots P_i\right)\\ \\
&=& \Pi_{k-1}Q_k P_{k+1}\ldots P_{\nu-1}-\Pi_{k-1}Q_k P_{k+1}\ldots P_i=U_k-M_kP_{k+1}\ldots P_i.
\end{eqnarray*}
This completes the proof.
\end{enumerate}
\end{proof}
\subsection{The Components $v_k^{\sigma}$}
\label{section6.5}
Since  the projectors $U_k= M_kP_{k+1}\ldots P_{\nu-1}$  project along
\begin{equation*}
N_0\oplus \cdots \oplus N_{k-1}\oplus N_{k+1}\oplus \cdots \oplus N_{\nu-1}\oplus \text{im}\,\Pi_{\nu-1},
\end{equation*}
we have $U_kQ_i=0$, $k\ne i$.  Now, using \eqref{3.116} and then \eqref{3.104}, we obtain
\begin{eqnarray*}
\lefteqn{U_kG_{\nu}^{-1}G_0B^{-}(Bx)^{\Delta}}\\ \\
&=& U_k\left(I-Q_0-\cdots- Q_{\nu-1}\right)B^{-}(Bx)^{\Delta}\\ \\
&=& \left(U_k-U_kQ_0-\cdots - U_kQ_{k-1}-U_kQ_k-U_kQ_{k+1}-\cdots\right.\\ \\
&-& \left.U_kQ_{\nu-1}\right)B^{-}(Bx)^{\Delta}\\ \\
&=& \left(U_k-U_kQ_k\right)B^{-}(Bx)^{\Delta}= U_k\left(I-Q_k\right)B^{-}(Bx)^{\Delta}\\ \\
&=& M_kP_{k+1}\ldots P_{\nu-1}\left(I-Q_k\right)B^{-}(Bx)^{\Delta}\\ \\
&=& M_k\left(P_{k+1}\ldots P_{\nu-1}-P_{k+1}\ldots P_{\nu-1}Q_k\right)B^{-}(Bx)^{\Delta}\\ \\
&=& M_k\left(P_{k+1}\ldots P_{\nu-1}-P_{k+1}\ldots P_{\nu-2}\left(I-Q_{\nu-1}\right)Q_k\right)B^{-}(Bx)^{\Delta}\\ \\
&=& M_k\left(P_{k+1}\ldots P_{\nu-1}-P_{k+1}\ldots P_{\nu-2}\left(Q_k-Q_{\nu-1}Q_k\right)\right)B^{-}(Bx)^{\Delta}\\ \\
&=& M_k\left(P_{k+1}\ldots P_{\nu-1}-P_{k+1}\ldots P_{\nu-2}Q_k\right)B^{-}(Bx)^{\Delta}\\ \\
&=& \cdots= M_k\left(P_{k+1}\ldots P_{\nu-1}-P_{k+1}Q_k\right)B^{-}(Bx)^{\Delta}\\ \\
&=& M_k\left(P_{k+1}\ldots P_{\nu-1}-\left(I-Q_{k+1}\right)\right)B^{-}(Bx)^{\Delta}\\ \\
&=& M_k\left(P_{k+1}\ldots P_{\nu-1}-Q_k+Q_{k+1}Q_k\right)B^{-}(Bx)^{\Delta}\\ \\
&=& M_k\left(P_{k+1}\ldots P_{\nu-1}-Q_k\right)B^{-}(Bx)^{\Delta}\\ \\
&=& \left(M_kP_{k+1}\ldots P_{\nu-1}-M_kAQ_k\right)B^{-}(Bx)^{\Delta}\\ \\
&=& \left(M_kP_{k+1}\ldots P_{\nu-1}-M_k\right)B^{-}(Bx)^{\Delta}=M_k\left(P_{k+1}\ldots P_{\nu-1}-I\right)B^{-}(Bx)^{\Delta},
\end{eqnarray*}
i.e.,
\begin{equation}
\label{4.25} U_kG_{\nu}^{-1}G_0B^{-}(Bx)^{\Delta}=M_k\left(P_{k+1}\ldots P_{\nu-1}-I\right)B^{-}(Bx)^{\Delta}.
\end{equation}
Note that  we have
\begin{eqnarray*}
P_{k+1}\ldots P_{\nu-1}-I&=& -Q_{k+1}-P_{k+1}Q_{k+2}-P_{k+1}P_{k+2}Q_{k+3}-\cdots - P_{k+1}\cdots P_{\nu-1}Q_{\nu-1}.
\end{eqnarray*}
Since $Q_j M_j= Q_j$, we get
\begin{eqnarray*}
P_{k+1}\ldots P_{\nu-1}-I&=& -Q_{k+1}M_{k+1}-P_{k+1}Q_{k+2}M_{k+2}-P_{k+1}P_{k+2}Q_{k+3}M_{k+3}\\ \\
&&-\cdots- P_{k+1}\ldots P_{\nu-2}Q_{\nu-1}M_{\nu-1}.
\end{eqnarray*}
Then, \eqref{4.25} takes the form
\begin{equation}
\label{6.26}
\begin{array}{lll}
U_kG_{\nu}^{-1}G_0B^{-}(Bx)^{\Delta}&=& M_k\bigg( -Q_{k+1}M_{k+1}-P_{k+1}Q_{k+2}M_{k+2}-P_{k+1}P_{k+2}Q_{k+3}M_{k+3}\\ \\
&&-\cdots- P_{k+1}\ldots P_{\nu-2}Q_{\nu-1}M_{\nu-1}\bigg).
\end{array}
\end{equation}
For $j>0$, we have
\begin{equation*}
BM_j x= BM_j B^- Bx
\end{equation*}
and then
\begin{eqnarray*}
(BM_j x)^{\Delta}&=& (BM_j B^-Bx)^{\Delta}= (BM_j B^-)^{\Delta}B^{\sigma}x^{\sigma}+BM_jB^{-}(Bx)^{\Delta}
\end{eqnarray*}
or
\begin{equation}
\label{6.27} BM_jB^{-}(Bx)^{\Delta}=(BM_j x)^{\Delta}-(BM_J B^-)^{\Delta}B^{\sigma}x^{\sigma}.
\end{equation}
Since $P_0=B^-B$ and
\begin{eqnarray*}
M_j&=& P_0 M_j= B^-BM_j,
\end{eqnarray*}
by \eqref{6.26}, we find
\begin{eqnarray*}
&&U_kG_{\nu}^{-1}G_0B^{-}(Bx)^{\Delta}\\ \\
&=& M_k\bigg(-Q_{k+1}B^{-}BM_{k+1}B^{-}(Bx)^{\Delta}-P_{k+1}Q_{k+2}B^{-}BM_{k+2}B^{-}(Bx)^{\Delta}\\ \\
&&-P_{k+1}P_{k+2}Q_{k+3}B^{-}BM_{k+3}B^{-}(Bx)^{\Delta}\cdots\\ \\
&& -P_{k+1}\ldots P_{\nu-2} Q_{\nu-1}B^{-}BM_{\nu-1}B^{-}(Bx)^{\Delta}\bigg)\\ \\
&=& M_k\bigg(-Q_{k+1}B^{-}\left(Bv_{k+1}\right)^{\Delta}-P_{k+1} Q_{k+2}B^{-}\left(Bv_{k+2}\right)^{\Delta}\\ \\
&&-P_{k+1}P_{k+2}Q_{k+3}B^{-}\left(Bv_{k+3}\right)^{\Delta}\cdots\\ \\
&&-P_{k+1}\ldots P_{\nu-1}Q_{\nu-1}B^{-}\left(Bv_{\nu-1}\right)^{\Delta}\bigg)\\ \\
&&+M_k\bigg(Q_{k+1}B^{-}\left(BM_{k+1}B^-\right)^{\Delta}B^{\sigma}x^{\sigma}+P_{k+1}Q_{k+2}B^{-}\left(BM_{k+2}B^-\right)^{\Delta}B^{\sigma}x^{\sigma}\\ \\
&&+P_{k+1}P_{k+2}Q_{k+3}B^{-}\left(BM_{k+3}B^-\right)^{\Delta}B^{\sigma}x^{\sigma}\cdots\\ \\
&&+P_{k+1}\ldots P_{\nu-2}Q_{\nu-1}B^{-}\left(BM_{\nu-1}B^-\right)^{\Delta}B^{\sigma}x^{\sigma}\bigg).
\end{eqnarray*}
Set
\begin{eqnarray*}
\mathcal{N}_{kk+1}&=& -M_kQ_{k+1}B^{-},\quad \mathcal{N}_{kk+2}= -M_kP_{k+1}Q_{k+2}B^{-},\\ \\
\mathcal{N}_{kk+3}&=& -M_kP_{k+1}P_{k+2}Q_{k+3}B^{-},\quad \cdots\\ \\
\mathcal{N}_{k\nu-1}&=& -M_kP_{k+1}\ldots P_{\nu-2}Q_{\nu-1}B^{-}.
\end{eqnarray*}
Therefore
\begin{eqnarray*}
&&U_kG_{\nu}^{-1}G_0B^{-}(Bx)^{\Delta} \\ \\
&=& \sum\limits_{j=k+1}^{\nu-1}\mathcal{N}_{kj}(Bv_j)^{\Delta}\\ \\
&&+M_k\bigg(Q_{k+1}B^{-}\left(BM_{k+1}B^-\right)^{\Delta}Bx+P_{k+1}Q_{k+2}B^{-}\left(BM_{k+2}B^-\right)^{\Delta}B^{\sigma}x^{\sigma}\\ \\
&&+P_{k+1}P_{k+2}Q_{k+3}B^{-}\left(BM_{k+3}B^-\right)^{\Delta}B^{\sigma}x^{\sigma}
\\ \\ &&+\cdots
+P_{k+1}\ldots P_{\nu-2}Q_{\nu-1}B^{-}\left(BM_{\nu-1}B^-\right)^{\Delta}B^{\sigma}x^{\sigma}\bigg).
\end{eqnarray*}
Set
\begin{eqnarray*}
J&=& M_k\bigg(Q_{k+1}B^{-}\left(BM_{k+1}B^-\right)^{\Delta}Bx+P_{k+1}Q_{k+2}B^{-}\left(BM_{k+2}B^-\right)^{\Delta}B^{\sigma}x^{\sigma}\\ \\
&&+P_{k+1}P_{k+2}Q_{k+3}B^{-}\left(BM_{k+3}B^-\right)^{\Delta}B^{\sigma}x^{\sigma}+\cdots\\ \\
&&+P_{k+1}\ldots P_{\nu-2}Q_{\nu-1}B^{-}\left(BM_{\nu-1}B^-\right)^{\Delta}B^{\sigma}x^{\sigma}\bigg).
\end{eqnarray*}
Note that $BM_0=0$ and
\begin{eqnarray*}
Bx&=& B\Pi_{\nu-1}x+B\sum\limits_{j=0}^{\nu-1}M_j x= B\Pi_{\nu-1}x+B\sum\limits_{j=1}^{\nu-1}M_j x.
\end{eqnarray*}
Then $J$ takes the form
\begin{eqnarray*}
J&=& M_k\bigg(Q_{k+1}B^{-}\left(BM_{k+1}B^-\right)^{\Delta}+P_{k+1}Q_{k+2}B^{-}\left(BM_{k+2}B^-\right)^{\Delta}\\ \\
&&+P_{k+1}P_{k+2}Q_{k+3}B^{-}\left(BM_{k+3}B^-\right)^{\Delta}+\cdots\\ \\
&&+P_{k+1}\ldots P_{\nu-2}Q_{\nu-1}B^{-}\left(BM_{\nu-1}B^-\right)^{\Delta}\bigg)B^{\sigma}\Pi_{\nu-1}^{\sigma}x^{\sigma}\\ \\
&&+M_k\bigg(Q_{k+1}B^{-}\left(BM_{k+1}B^-\right)^{\Delta}+P_{k+1}Q_{k+2}B^{-}\left(BM_{k+2}B^-\right)^{\Delta}\\ \\
&&+P_{k+1}P_{k+2}Q_{k+3}B^{-}\left(BM_{k+3}B^-\right)^{\Delta}\cdots\\ \\
&&+P_{k+1}\ldots P_{\nu-2}Q_{\nu-1}B^{-}\left(BM_{\nu-1}B^-\right)^{\Delta}\bigg)\sum\limits_{j=0}^{\nu-1}B^{\sigma}M_j^{\sigma}x^{\sigma}.
\end{eqnarray*}
Denote
\begin{eqnarray*}
J_1&=& M_k\bigg(Q_{k+1}B^{-}\left(BM_{k+1}B^-\right)^{\Delta}+P_{k+1}Q_{k+2}B^{-}\left(BM_{k+2}B^-\right)^{\Delta}\\ \\
&&+P_{k+1}P_{k+2}Q_{k+3}B^{-}\left(BM_{k+3}B^-\right)^{\Delta}+\cdots\\ \\
&&+P_{k+1}\ldots P_{\nu-2}Q_{\nu-1}B^{-}\left(BM_{\nu-1}B^-\right)^{\Delta}\bigg)B^{\sigma}\Pi_{\nu-1}^{\sigma}x^{\sigma},\\ \\
J_2&=& M_k\bigg(Q_{k+1}B^{-}\left(BM_{k+1}B^-\right)^{\Delta}+P_{k+1}Q_{k+2}B^{-}\left(BM_{k+2}B^-\right)^{\Delta}\\ \\
&&+P_{k+1}P_{k+2}Q_{k+3}B^{-}\left(BM_{k+3}B^-\right)^{\Delta}+\cdots\\ \\
&&+P_{k+1}\ldots P_{\nu-2}Q_{\nu-1}B^{-}\left(BM_{\nu-1}B^-\right)^{\Delta}\bigg)\sum\limits_{j=1}^{\nu-1}B^{\sigma}M_j^{\sigma}x^{\sigma}.
\end{eqnarray*}
We have
\begin{eqnarray*}
M_j B^-B\Pi_{\nu-1}x&=& M_j P_0 \Pi_{\nu-1}x= M_j \Pi_{\nu-1}x= 0.
\end{eqnarray*}
Hence,
\begin{eqnarray*}
&&(BM_j B^-)^{\Delta}B^{\sigma} \Pi_{\nu-1}^{\sigma}x^{\sigma}\\ \\&=& -BM_jB^{-}(B\Pi_{\nu-1}x)^{\Delta}+(BM_j B^-B \Pi_{\nu-1}x)^{\Delta}= -BM_jB^{-}(B\Pi_{\nu-1}x)^{\Delta}.
\end{eqnarray*}
Therefore, for $J_1$ we get the following representation.
\begin{eqnarray*}
J_1&=& M_k\bigg(Q_{k+1}B^{-}\left(BM_{k+1}B^-\right)^{\Delta}+P_{k+1}Q_{k+2}B^{-}\left(BM_{k+2}B^-\right)^{\Delta}\\ \\
&&+P_{k+1}P_{k+2}Q_{k+3}B^{-}\left(BM_{k+3}B^-\right)^{\Delta}+\cdots\\ \\
&&+P_{k+1}\ldots P_{\nu-2}Q_{\nu-1}B^{-}\left(BM_{\nu-1}B^-\right)^{\Delta}\bigg)B^{\sigma}\Pi_{\nu-1}^{\sigma}x^{\sigma}\\ \\
&=& -M_k\bigg(Q_{k+1}B^{-}BM_{k+1}B^{-}+P_{k+1}Q_{k+2}B^{-}BM_{k+2}B^{-}\\ \\
&&+ P_{k+1}P_{k+2}Q_{k+3}B^{-}BM_{k+3}B^{-}+\cdots \\ \\
&&+P_{k+1}\ldots P_{\nu-2}Q_{\nu-1}B^{-}BM_{\nu-1} B^{-}\bigg)(B\Pi_{\nu-1}x)^{\Delta}B^{\sigma}\Pi_{\nu-1}^{\sigma}x^{\sigma}\\ \\
&=& -M_k\bigg(Q_{k+1}M_{k+1}+P_{k+1}Q_{k+2}M_{k+2}+P_{k+1}P_{k+2}Q_{k+3}M_{k+3}+\cdots\\ \\
&&+ P_{k+1}\ldots P_{\nu-2}Q_{\nu-1}M_{\nu-1}\bigg)B^{-}(B\Pi_{\nu-1}x)^{\Delta}B^{\sigma}\Pi_{\nu-1}^{\sigma}x^{\sigma}\\ \\
&=& -M_k\bigg(Q_{k+1}+P_{k+1}Q_{k+2}+P_{k+1}P_{k+2}Q_{k+3}+\cdots\\ \\
&&+ P_{k+1}\ldots P_{\nu-2}Q_{\nu-1}\bigg)B^{-}(B\Pi_{\nu-1}x)^{\Delta}B^{\sigma}\Pi_{\nu-1}^{\sigma}x^{\sigma}\\ \\
&=& M_k\left(P_{k+1}\ldots P_{\nu-1}-I\right)B^{-}(B\Pi_{\nu-1}x)^{\Delta}B^{\sigma}\Pi_{\nu-1}^{\sigma}x^{\sigma}\\ \\
&=& M_k\left(P_{k+1}\ldots P_{\nu-1}-I\right)B^{-}(B\Pi_{\nu-1}x)^{\Delta}B^{\sigma}B^{-\sigma}B^{\sigma}\Pi_{\nu-1}^{\sigma}x^{\sigma},
\end{eqnarray*}
i.e.,
\begin{eqnarray*}
J_1&=&  M_k\left(P_{k+1}\ldots P_{\nu-1}-I\right)B^{-}(B\Pi_{\nu-1}x)^{\Delta}B^{\sigma}B^{-\sigma}B^{\sigma}\Pi_{\nu-1}^{\sigma}x^{\sigma}.
\end{eqnarray*}
Set
\begin{equation*}
\widetilde{\mathcal{K}}_k= M_k\left(P_{k+1}\ldots P_{\nu-1}-I\right)B^{-}(B\Pi_{\nu-1}x)^{\Delta}B^{\sigma}.
\end{equation*}
Thus,
\begin{equation*}
J_1= \widetilde{\mathcal{K}}_k B^{-\sigma}u^{\sigma}.
\end{equation*}
Next,
\begin{eqnarray*}
J_2&=& M_k\bigg(Q_{k+1}B^{-}\left(BM_{k+1}B^-\right)^{\Delta}+P_{k+1}Q_{k+2}B^{-}\left(BM_{k+2}B^-\right)^{\Delta}\\ \\
&&+P_{k+1}P_{k+2}Q_{k+3}B^{-}\left(BM_{k+3}B^-\right)^{\Delta}+\cdots\\ \\
&&+P_{k+1}\ldots P_{\nu-2}Q_{\nu-1}B^{-}\left(BM_{\nu-1}B^-\right)^{\Delta}\bigg)\sum\limits_{j=1}^{\nu-1}B^{\sigma}M_j^{\sigma}x^{\sigma}\\ \\
&=& M_k\bigg(Q_{k+1}B^{-}\left(BM_{k+1}B^-\right)^{\Delta}+P_{k+1}Q_{k+2}B^{-}\left(BM_{k+2}B^-\right)^{\Delta}\\ \\
&&+P_{k+1}P_{k+2}Q_{k+3}B^{-}\left(BM_{k+3}B^-\right)^{\Delta}+\cdots\\ \\
&&+P_{k+1}\ldots P_{\nu-2}Q_{\nu-1}B^{-}\left(BM_{\nu-1}B^-\right)^{\Delta}\bigg)B^{\sigma}\sum\limits_{j=1}^{\nu-1}M_j^{\sigma}B^{-\sigma}B^{\sigma}M_j^{\sigma}x^{\sigma}.
\end{eqnarray*}
Define
\begin{eqnarray*}
\widetilde{\mathcal{M}}_{kj}&=& -M_k\bigg(Q_{k+1}B^{-}\left(BM_{k+1}B^-\right)^{\Delta}+P_{k+1}Q_{k+2}B^{-}\left(BM_{k+2}B^-\right)^{\Delta}\\ \\
&&+P_{k+1}P_{k+2}Q_{k+3}B^{-}\left(BM_{k+3}B^-\right)^{\Delta}+\cdots\\ \\
&&+P_{k+1}\ldots P_{\nu-2}Q_{\nu-1}B^{-}\left(BM_{\nu-1}B^-\right)^{\Delta}\bigg)B^{\sigma}M_j^{\sigma}B^{-\sigma}B^{\sigma}.
\end{eqnarray*}
Therefore
\begin{equation*}
J_2= \sum\limits_{j=1}^{\nu-1}\widetilde{\mathcal{M}}_{kj}M_j^{\sigma} x^{\sigma}.
\end{equation*}
Note that
\begin{eqnarray*}
(BM_j B^-)^{\Delta}B^{\sigma}M_j^{\sigma} B^{-\sigma}&=& -BM_jB^{-}(BM_jB^-)^{\Delta}+(BM_j B^-BM_j B^-)^{\Delta}\\ \\ &=& (BM_jB^-)^{\Delta}-BM_jB^{-}(BM_j B^-)^{\Delta}
\end{eqnarray*}
and
\begin{eqnarray*}
(BM_i B^-)^{\Delta}B^{\sigma}M_j^{\sigma} B^{-\sigma}&=& (BM_i B^-BM_j B^-)^{\Delta}-BM_iB^{-}(BM_j B^-)^{\Delta}\\ \\
&=&-BM_iB^{-}(BM_j B^-)^{\Delta},\quad i\ne j.
\end{eqnarray*}
Hence, for $j>k$, we have
\begin{eqnarray*}
\widetilde{\mathcal{M}}_{kj}&=& -M_k\bigg(Q_{k+1}B^{-}\left(BM_{k+1}B^-\right)^{\Delta}+P_{k+1}Q_{k+2}B^{-}\left(BM_{k+2}B^-\right)^{\Delta}\\ \\
&&+P_{k+1}P_{k+2}Q_{k+3}B^{-}\left(BM_{k+3}B^-\right)^{\Delta}+\cdots\\ \\
&&+P_{k+1}\ldots P_{j-1}Q_jB^{-}(BM_j B^-)^{\Delta}+ P_{k+1}^{\sigma}\ldots P_jQ_{j+1}B^{-}(BM_{j+1}B^-)^{\Delta}-\cdots\\ \\
&&+P_{k+1}\ldots P_{\nu-2}Q_{\nu-1}B^{-}\left(BM_{\nu-1}B^-\right)^{\Delta}\bigg)B^{\sigma}M_j^{\sigma}B^{-\sigma}B^{\sigma}\\ \\
&=& -M_k\bigg(-Q_{k+1}B^{-}BM_{k+1}B^{-}(BM_j B^-)^{\Delta}-P_{k+1}Q_{k+2}B^{-}BM_{k+2}B^{-}(BM_j B^-)^{\Delta}\\ \\
&&-P_{k+1}P_{k+2}Q_{k+3}B^{-}BM_{k+3}B^{-}(BM_j B^-)^{\Delta}-\cdots\\ \\
&&-P_{k+1}\ldots P_{j-2}Q_{j-1}B^{-}BM_{j-1}B^{-}(BM_j B^-)^{\Delta}-\\ \\
&&- P_{k+1}\ldots P_{j-1}Q_{j}B^{-}BM_{j}B^{-}(BM_j B^-)^{\Delta}\\ \\
&&+P_{k+1}\ldots P_{j-1}Q_j(BM_jB^-)^{\Delta}-P_{k+1}\ldots P_jQ_{j+1}B^{-}BM_{j+1}B^{-}(BM_j B^-)^{\Delta}+\cdots\\ \\
&&-P_{k+1}\ldots P_{\nu-2}Q_{\nu-1}B^{-}BM_{\nu-1}B^{-}(BM_j B^-)^{\Delta}\bigg)B^{\sigma}\\ \\
&=& M_k\bigg(Q_{k+1}+P_{k+1}Q_{k+2}+\cdots+ P_{k+1}\ldots P_{\nu-2}Q_{\nu-1}\\ \\
&&-P_{k+1}\ldots P_{j-1}Q_j\bigg)B^{-}(BM_j B^-)^{\Delta}B^{\sigma}\\ \\
&=& M_k\left(I-P_{k+1}\ldots P_{\nu-1}-P_{k+1}\ldots P_{j-1}Q_j\right)B^{-}(BM_j B^-)^{\Delta}B^{\sigma}.
\end{eqnarray*}
For $j<k$, we get
\begin{eqnarray*}
\widetilde{\mathcal{M}}_{kj}&=& -M_k\bigg(-Q_{k+1}B^{-}BM_{k+1}B^{-}(BM_j B^-)^{\Delta}-P_{k+1}Q_{k+2}B^{-}BM_{k+2}B^{-}(BM_j B^-)^{\Delta}\\ \\
&&-P_{k+1}P_{k+2}Q_{k+3}B^{-}BM_{k+3}B^{-}(BM_j B^-)^{\Delta}\cdots\\ \\
&&-P_{k+1}\ldots P_{\nu-2}Q_{\nu-1}B^{-}BM_{\nu-1}B^{-}(BM_j B^-)^{\Delta}\bigg)B^{\sigma}\\ \\
&=& M_k\bigg(Q_{k+1}+P_{k+1}Q_{k+2}+\cdots+ P_{k+1}\ldots P_{\nu-2}Q_{\nu-1}\bigg)B^{-}(BM_j B^-)^{\Delta}B^{\sigma}\\ \\
&=& M_k\left(I-P_{k+1}\ldots P_{\nu-1}\right)B^{-}(BM_j B^-)^{\Delta}B^{\sigma}.
\end{eqnarray*}
Consequently
\begin{eqnarray*}
J&=& J_1+J_2= \widetilde{\mathcal{K}}_k B^{-\sigma}u^{\sigma}+\sum\limits_{j=1}^{\nu-1}\widetilde{\mathcal{M}}_{kj} M_j^{\sigma} x^{\sigma}
=\widetilde{\mathcal{K}}_k B^{-\sigma}u^{\sigma}+\sum\limits_{j=1}^{\nu-1}\widetilde{\mathcal{M}}_{kj} v_j^{\sigma}
\end{eqnarray*}
and
\begin{eqnarray*}
U_k G_{\nu}^{-}G_0B^{-}(Bx)^{\Delta}&=& \sum\limits_{j=k+1}^{\nu-1}\mathcal{N}_{kj}(Bv_j)^{\Delta}+\widetilde{\mathcal{K}}_k B^{-\sigma}u^{\sigma}+\sum\limits_{j=1}^{\nu-1}\widetilde{\mathcal{M}}_{kj} v_j^{\sigma}.
\end{eqnarray*}

 \subsection{Decomposition of $U_kG_{\nu}^{-1}C^{\sigma}x^{\sigma}$}
\label{section6.6}

In this section, we will find  representations of the terms $U_kG_{\nu}^{-1}C^{\sigma}x^{\sigma}$, using the decomposition
\begin{equation*}
x=\Pi_{\nu-1}x+\sum\limits_{j=0}^{\nu-1}M_j x.
\end{equation*}
We have
\begin{eqnarray*}
U_kG_{\nu}^{-1}C^{\sigma}x^{\sigma}&=& U_kG_{\nu}^{-1}C^{\sigma}\left(\Pi_{\nu-1}^{\sigma}x^{\sigma}+\sum\limits_{j=0}^{\nu-1}M_j^{\sigma} x^{\sigma}\right)\\ \\
&=& U_k G_{\nu}^{-1}C^{\sigma}\Pi_{\nu-1}^{\sigma}x^{\sigma}+\sum\limits_{j=0}^{\nu-1}U_kG_{\nu}^{-1}C^{\sigma}M_j^{\sigma} x^{\sigma}\\ \\
&=& M_kP_{k+1}\ldots P_{\nu-1}G_{\nu}^{-1}C^{\sigma}\Pi_{\nu-1}^{\sigma}x^{\sigma}+\sum\limits_{j=0}^{\nu-1}M_kP_{k+1}\ldots P_{\nu-1}G_{\nu}^{-1}C^{\sigma}M_j^{\sigma} x^{\sigma}.
\end{eqnarray*}
Set
\begin{eqnarray*}
I_1&=& M_kP_{k+1}\ldots P_{\nu-1}G_{\nu}^{-1}C^{\sigma}\Pi_{\nu-1}^{\sigma}x^{\sigma},\\ \\
I_2&=& \sum\limits_{j=0}^{\nu-1}M_kP_{k+1}\ldots P_{\nu-1}G_{\nu}^{-1}C^{\sigma}M_j^{\sigma} x^{\sigma}.
\end{eqnarray*}
Then
\begin{equation}
\label{6.30} U_kG_{\nu}^{-1}C^{\sigma}x^{\sigma}=I_1+I_2.
\end{equation}
Denote
\begin{equation*}
\widehat{\mathcal{K}}_k=M_kP_{k+1}\ldots P_{\nu-1}G_{\nu}^{-1}C^{\sigma}B^{-\sigma}.
\end{equation*}
Hence,
\begin{eqnarray*}
I_1&=& M_kP_{k+1}\ldots P_{\nu-1}G_{\nu}^{-1}C^{\sigma}\Pi_{\nu-1}^{\sigma}x^{\sigma}=  M_kP_{k+1}\ldots P_{\nu-1}G_{\nu}^{-1}C^{\sigma}P_0^{\sigma}\Pi_{\nu-1}^{\sigma}x^{\sigma}\\ \\
&=&  M_kP_{k+1}\ldots P_{\nu-1}G_{\nu}^{-1}C^{\sigma}B^{-\sigma}B^{\sigma}\Pi_{\nu-1}^{\sigma}x^{\sigma}=  M_kP_{k+1}\ldots P_{\nu-1}G_{\nu}^{-1}C^{\sigma}B^{-\sigma}u^{\sigma}= \widehat{\mathcal{K}}_ku^{\sigma},
\end{eqnarray*}
i.e.,
\begin{equation*}
 I_1=\widehat{\mathcal{K}}_ku^{\sigma}.
\end{equation*}

Now, we consider $I_2$. We have
\begin{eqnarray*}
I_2&=&  M_kP_{k+1}\ldots P_{\nu-1}G_{\nu}^{-1}C^{\sigma}M_0^{\sigma}x^{\sigma}+ M_kP_{k+1}\ldots P_{\nu-1}G_{\nu}^{-1}C^{\sigma}\sum\limits_{j=0}^{\nu-1}M_j^{\sigma} x^{\sigma}.
\end{eqnarray*}
Set
\begin{eqnarray*}
\mathcal{M}_{k0}&=&  M_kP_{k+1}\ldots P_{\nu-1}G_{\nu}^{-1}C^{\sigma},\quad 
J= M_kP_{k+1}\ldots P_{\nu-1}G_{\nu}^{-1}C^{\sigma}\sum\limits_{j=0}^{\nu-1}M_j^{\sigma} x^{\sigma}.
\end{eqnarray*}
Hence,
\begin{eqnarray*}
I_2&=& \mathcal{M}_{k0}M_0 ^{\sigma}x^{\sigma}+J= \mathcal{M}_{k0}u^{\sigma}+J.
\end{eqnarray*}
Let us simplify $J$. We have
\begin{eqnarray*}
\lefteqn{ M_kP_{k+1}\ldots P_{\nu-1}G_{\nu}^{-1}C^{\sigma}M_j^{\sigma}}\\ \\
&=&  M_kP_{k+1}\ldots P_{\nu-1}\left(-\left(Q_1+\cdots+Q_j\right)M_j^{\sigma}\right. \\ \\
&-&\left.\sum\limits_{i=1}^j \left(I-Q_i-\cdots - Q_{\nu-1}^ {\sigma}\right)B^{-}(B\Pi_i B^-)^{\Delta}B^{\sigma}M_j^{\sigma}\right).
\end{eqnarray*}

We will consider the following cases.
\begin{enumerate}
\item Let $j<k$. Then
\begin{equation*}
M_kP_{k+1}\ldots P_{\nu-1}\left(Q_1+\cdots+Q_j\right)M_j^{\sigma}=0.
\end{equation*}
Moreover,
\begin{eqnarray*}
\lefteqn{ M_kP_{k+1}\ldots P_{\nu-1}\sum\limits_{i=1}^j \left(I-Q_i-\cdots - Q_{\nu-1}^ {\sigma}\right)B^{-}(B\Pi_i B^-)^{\Delta}B^{\sigma}M_j^{\sigma}}\\ \\
&=&   M_kP_{k+1}\ldots P_{\nu-1}\sum\limits_{i=1}^j \left(I-Q_i-\cdots -Q_{k-1}-Q_k-Q_{k+1}-\cdots - Q_{\nu-1}^ {\sigma}\right)\\ \\
&& B^{-}(B\Pi_i B^-)^{\Delta}B^{\sigma}M_j^{\sigma}\\ \\
&=&  \sum\limits_{i=1}^j \left(M_kP_{k+1}\ldots P_{\nu-1}-M_kP_{k+1}\ldots P_{\nu-1}Q_k\right)B^{-}(B\Pi_i B^-)^{\Delta}B^{\sigma}M_j^{\sigma}\\ \\
&=&  \sum\limits_{i=1}^j \left(M_kP_{k+1}\ldots P_{\nu-1}-M_kP_{k+1}\ldots P_{\nu-1}Q_k\right)B^{-}(B\Pi_i B^-)^{\Delta}B^{\sigma}M_j^{\sigma}\\ \\
&=& \sum\limits_{i=1}^j \left(M_kP_{k+1}\ldots P_{\nu-1}-M_kQ_k\right)B^{-}(B\Pi_i B^-)^{\Delta}B^{\sigma}M_j^{\sigma}\\ \\
&=&  \sum\limits_{i=1}^j\left(M_kP_{k+1}\ldots P_{\nu-1}-M_k\right)B^{-}(B\Pi_i B^-)^{\Delta}B^{\sigma}M_j^{\sigma}\\ \\
&=& \sum\limits_{i=1}^jM_k\left(P_{k+1}\ldots P_{\nu-1}-I\right)B^{-}(B\Pi_i B^-)^{\Delta}B^{\sigma}M_j^{\sigma}.
\end{eqnarray*}
Note that, for $i<j$, we have
\begin{eqnarray*}
\lefteqn{B^{-}(B\Pi_i B^-)^{\Delta}B^{\sigma}M_j^{\sigma}= B^{-}(B\Pi_i B^-)^{\Delta}B^{\sigma}M_j^{\sigma} B^{-\sigma}B^{\sigma}M_j^{\sigma}}\\ \\
&=& B^{-}(B\Pi_i B^- B M_j B^-)^{\Delta}B^{\sigma}M_j^{\sigma}-B^{-}B\Pi_iB^{-}(BM_j B^-)^{\Delta}B^{\sigma}M_j^{\sigma}\\ \\
&=& B^{-}(BM_j B^-)^{\Delta}B^{\sigma}M_j^{\sigma}-\Pi_iB^{-}(BM_j B^-)^{\Delta}B^{\sigma}M_j^{\sigma}= (I-\Pi_i)B^{-}(BM_j B^-)^{\Delta}B^{\sigma}M_j^{\sigma}
\end{eqnarray*}
and 
\begin{eqnarray*}
\lefteqn{\left(P_{k+1}\ldots P_{\nu-1}-I\right)\left(I-\Pi_i\right)}\\ \\
&=& P_{k+1}\ldots P_{\nu-1}-I -P_{k+1}\ldots P_{\nu-1}\Pi_i+\Pi_i\\ \\
&=& P_{k+1} \ldots P_{\nu-1}-I -P_{k+1}\ldots P_{\nu-1}+\\ \\
&+&Q_i+Q_{i-1}P_i+Q_{i-2}P_{i-1}P_i+\cdots +Q_0P_1\ldots P_i+\Pi_i\\ \\
&=& -I+\Pi_i+Q_0P_1\ldots P_i+Q_1P_2\ldots P_i+Q_2P_3\ldots P_i+\cdots+ Q_{i-1}P_i+Q_i\\ \\
&=& -I+P_1P_2\ldots P_i+Q_1P_2\ldots P_i+Q_2P_3\ldots P_i+\cdots+Q_{i-1}P_i+Q_i\\ \\
&=& -I+P_2P_3\ldots P_i+Q_2P_3\ldots P_i+\cdots+ Q_{i-1}P_i+Q_i=\cdots\\ \\
&=& -I+P_{i-1}P_i+Q_{i-1}P_i+Q_i= -I+P_i+Q_i= -I+I= 0.
\end{eqnarray*}
Therefore, for $i<j$, we have
\begin{equation*}
M_k\left(P_{k+1}\ldots P_{\nu-1}-I\right)B^{-}(B\Pi_i B^-)^{\Delta}B^{\sigma}M_j^{\sigma}=0.
\end{equation*}
For $i=j$, we have
\begin{eqnarray*}
&& B^{-}(B\Pi_j B^-)^{\Delta}B^{\sigma}M_j^{\sigma}\\ \\
&=& B^{-}(B\Pi_j B^-)^{\Delta}B^{\sigma}M_j^{\sigma} B^{-\sigma} B^{\sigma}M_j^{\sigma}= B^{-}(B\Pi_j B^- BM_j B^-)^{\Delta}B^{\sigma}M_j^{\sigma}\\ \\
&&- B^{-}B\Pi_jB^{-}(BM_j B^-)^{\Delta}B^{\sigma}M_j^{\sigma}= -\Pi_jB^{-}(BM_j B^-)^{\Delta}B^{\sigma}M_j^{\sigma}
\end{eqnarray*}
and
\begin{eqnarray*}
\lefteqn{(P_{k+1}\ldots P_{\nu-1}-I)\Pi_j=P_{k+1}\ldots P_{\nu-1}\Pi_j-\Pi_j}\\ \\
&=&P_{k+1}\ldots P_{\nu-1}-\Pi_j-Q_0P_1\ldots P_j-Q_1P_2\ldots P_j- \cdots- Q_{j-1}P_j-Q_j\\ \\
&=& P_{k+1}\ldots P_{\nu-1}-P_1\ldots P_j-Q_1P_2\ldots P_j-\cdots - Q_{j-1}P_j-Q_j\\ \\
&=& P_{k+1}\ldots P_{\nu-1}-P_j-Q_j= P_{k+1}\ldots P_{\nu-1}-I.
\end{eqnarray*}
Thus,
\begin{eqnarray*}
&&{M_k\left(P_{k+1}\ldots P_{\nu-1}-I\right)B^{-}(B\Pi_j B^-)^{\Delta}B^{\sigma}M_j^{\sigma}}=\\ \\ 
&&-M_k\left(P_{k+1}\ldots P_{\nu-1}-I\right)B^{-}(BM_jB^-)^{\Delta}B^{\sigma}M_j^{\sigma}
\end{eqnarray*}
and
\begin{equation*}
M_kP_{k+1}\ldots P_{\nu-1}G_{\nu}^{-1}CM_j=-M_k\left(P_{k+1}\ldots P_{\nu-1}-I\right)B^{-}(BM_jB^-)^{\Delta}B^{\sigma}M_j^{\sigma}.
\end{equation*}
Set
\begin{equation*}
\mathcal{M}_{kj}=-M_k\left(P_{k+1}\ldots P_{\nu-1}-I\right)B^{-}(BM_jB^-)^{\Delta}B^{\sigma}.
\end{equation*}
Therefore
\begin{eqnarray*}
M_kP_{k+1}\ldots P_{\nu-1}G_{\nu}^{-1}CM_j&=& \mathcal{M}_{kj} M_j^{\sigma} x^{\sigma}= \mathcal{M}_{kj} v_j^{\sigma}.
\end{eqnarray*}
\item Let $j\geq k$. Then
\begin{eqnarray*}
&&M_kP_{k+1}\ldots P_{\nu-1}\left(Q_1+\cdots+Q_j\right)M_j^{\sigma}\\ \\
&=& M_kP_{k+1}\ldots P_{\nu-1}\left(Q_1+\cdots+Q_{k-1}+Q_k+Q_{k+1}+\cdots+Q_j\right)M_j^{\sigma}\\ \\
&=& M_kP_{k+1}\ldots P_{\nu-1}Q_kM_j+M_kP_{k+1}\ldots P_{\nu-1}Q_{k+1}M_j^{\sigma}+\cdots+M_kP_{k+1}\ldots P_{\nu-1}Q_jM_j^{\sigma}\\ \\
&=& M_kQ_kM_j^{\sigma}= M_kM_j^{\sigma}.
\end{eqnarray*}
Now, using the computations in the previous case, for $j=k$ we get
\begin{eqnarray*}
\lefteqn{M_kP_{k+1}\ldots P_{\nu-1}\sum\limits_{j=1}^k \left(I-Q_i-\cdots-Q_{\nu-1}\right)B^{-}(B\Pi_i B^-)^{\Delta}B^{\sigma}M_k^{\sigma}}\\ \\
&=& M_kP_{k+1}\ldots P_{\nu-1}\sum\limits_{j=1}^{k-1} \left(I-Q_i-\cdots-Q_{\nu-1}\right)B^{-}(B\Pi_i B^-)^{\Delta}B^{\sigma}M_k^{\sigma}\\ \\
&&+M_kP_{k+1}\ldots P_{\nu-1} \left(I-Q_k-\cdots-Q_{\nu-1}\right)B^{-}(B\Pi_i B^-)^{\Delta}B^{\sigma}M_k^{\sigma}\\ \\
&=& M_k(P_{k+1}\ldots P_{\nu-1}-1)\sum\limits_{j=1}^{k-1} B^{-}(B\Pi_i B^-)^{\Delta}B^{\sigma}M_k^{\sigma}\\ \\
&&+M_k(P_{k+1}\ldots P_{\nu-1}-1) B^{-}(B\Pi_i B^-)^{\Delta}B^{\sigma}M_k^{\sigma}\\ \\
&=& M_k(P_{k+1}\ldots P_{\nu-1}-1)\sum\limits_{j=1}^{k-1}(I-\Pi_i) B^{-}(BM_k B^-)^{\Delta}B^{\sigma}M_k^{\sigma}\\ \\
&&-M_k(P_{k+1}\ldots P_{\nu-1}-1) B^{-}(BM_k B^-)^{\Delta}B^{\sigma}M_k^{\sigma}\\ \\
&=& -M_k(P_{k+1}\ldots P_{\nu-1}-1) B^{-}(BM_k B^-)^{\Delta}B^{\sigma}M_k^{\sigma}.
\end{eqnarray*}
Let
\begin{equation*}
\mathcal{M}_{kk}=-M_k(P_{k+1}\ldots P_{\nu-1}-1) B^{-}(BM_k B^-)^{\Delta}B^{\sigma}M_k^{\sigma}-M_k.
\end{equation*}
Let $j>k$. Then
\begin{eqnarray*}
\lefteqn{M_kP_{k+1}\ldots P_{\nu-1}\sum\limits_{i=1}^j \left(I-Q_i-\cdots-Q_{\nu-1}\right)B^{-}(B\Pi_i B^-)^{\Delta}B^{\sigma}M_j^{\sigma}}\\ \\
&=& M_kP_{k+1}\ldots P_{\nu-1}\sum\limits_{i=1}^{k-1} \left(I-Q_i-\cdots-Q_{\nu-1}\right)B^{-}(B\Pi_i B^-)^{\Delta}B^{\sigma}M_j^{\sigma}\\ \\
&&+M_kP_{k+1}\ldots P_{\nu-1} \sum\limits_{i=k}^j\left(I-Q_i-\cdots-Q_{\nu-1}\right)B^{-}(B\Pi_i B^-)^{\Delta}B^{\sigma}M_j^{\sigma}\\ \\
&=& M_k\left(P_{k+1}\ldots P_{\nu-1}-I\right)\sum\limits_{i=1}^{k-1} B^{-}(B\Pi_i B^-)^{\Delta}B^{\sigma}M_j^{\sigma}\\ \\
&&+M_k\left(P_{k+1}\ldots P_{\nu-1}-I\right) \sum\limits_{i=k}^jB^{-}(B\Pi_i B^-)^{\Delta}B^{\sigma}M_j^{\sigma}\\ \\
&=& M_k\left(P_{k+1}\ldots P_{\nu-1}-I\right)\sum\limits_{i=1}^{k-1}(I-\Pi_i) B^{-}(BM_j B^-)^{\Delta}B^{\sigma}M_j^{\sigma}\\ \\
&&+M_k\left(P_{k+1}\ldots P_{\nu-1}-I\right) \sum\limits_{i=k}^j(I-\Pi_i)B^{-}(BM_j B^-)^{\Delta}B^{\sigma}M_j^{\sigma}\\ \\
&&- M_k\left(P_{k+1}\ldots P_{\nu-1}-I\right)\Pi_jB^{-}(BM_jB^-)^{\Delta}B^{\sigma}M_j^{\sigma}\\ \\
&=& M_k\left(P_{k+1}\ldots P_{\nu-1}-I\right) \sum\limits_{i=k}^j(I-\Pi_i)B^{-}(BM_j B^-)^{\Delta}B^{\sigma}M_j^{\sigma}\\ \\
&&- M_k\left(P_{k+1}\ldots P_{\nu-1}-I\right)\Pi_jB^{-}(BM_jB^-)^{\Delta}B^{\sigma}M_j^{\sigma}\\ \\
&=& M_k\left(P_{k+1}\ldots P_{\nu-1}-I\right)(I-\Pi_k)B^{-}(BM_jB^-)^{\Delta}B^{\sigma}M_j^{\sigma}\\ \\
&&+M_k\left(P_{k+1}\ldots P_{\nu-1}-I\right) \sum\limits_{i=k}^{j-1}(I-\Pi_i)B^{-}(BM_j B^-)^{\Delta}B^{\sigma}M_j^{\sigma}\\ \\
&&- M_k\left(P_{k+1}\ldots P_{\nu-1}-I\right)\Pi_jB^{-}(BM_jB^-)^{\Delta}B^{\sigma}M_j^{\sigma}\\ \\
&=& M_k\left(P_{k+1}\ldots P_{\nu-1}-I\right) \sum\limits_{i=k}^{j-1}(I-\Pi_i)B^{-}(BM_j B^-)^{\Delta}B^{\sigma}M_j^{\sigma}\\ \\
&&- M_k\left(P_{k+1}\ldots P_{\nu-1}-I\right)\Pi_jB^{-}(BM_jB^-)^{\Delta}B^{\sigma}M_j^{\sigma}.
\end{eqnarray*}
Note that
\begin{eqnarray*}
\lefteqn{{\left(P_{k+1}\ldots P_{\nu-1}-I\right)\left(I-\Pi_i\right)}= P_{k+1}\ldots P_{\nu-1}-I +\Pi_i-P_{k+1}\ldots P_{\nu-1}\Pi_i}\\ \\
&=& P_{k+1}\ldots P_{\nu-1}-I+\Pi_i-P_{k+1}\ldots P_{\nu-1}+\\ \\
&+& Q_0P_1\ldots P_i+Q_1P_2\ldots P_i+\cdots+Q_kP_{k+1}\ldots P_i\\ \\
&=& -I+ \Pi_i+Q_0P_1\ldots P_{i}+Q_1P_2\ldots P_i+\cdots+Q_kP_{k+1}\ldots P-i\\ \\
&=& -I+P_1\ldots P_i+Q_1P_2\ldots P_i+\cdots+Q_kP_{k+1}\ldots P_i\\ \\
&=& -I+P_2\ldots P_i+\cdots+Q_kP_{k+1}\ldots P_i=\cdots\\ \\
&=& -I+P_kP_{k+1}\ldots P_i+Q_kP_{k+1}\ldots P_i= -I+P_{k+1}\ldots P_i
\end{eqnarray*}
and
\begin{eqnarray*}
\lefteqn{\left(P_{k+1}\ldots P_{\nu-1}-I\right)\Pi_j
= P_{k+1}\ldots P_{\nu-1}-I+\left(P_{k+1}\ldots P_{\nu-1}-I\right)\left(\Pi_j-I\right)}\\ \\
&=& P_{k+1}\ldots P_{\nu-1}-I+I -P_{k+1}\ldots P_j= P_{k+1}\ldots P_{\nu-1}-P_{k+1}\ldots P_j.
\end{eqnarray*}
Consequently
\begin{eqnarray*}
\lefteqn{M_kP_{k+1}\ldots P_{\nu-1}\sum\limits_{i=1}^j \left(I-Q_i-\cdots-Q_{\nu-1}\right)B^{-}(B\Pi_i B^-)^{\Delta}B^{\sigma}M_j^{\sigma}}\\ \\
&=& M_k \sum\limits_{i=k}^{j-1}\left(P_{k+1}\ldots P_{i}-I\right)B^{-}(BM_j B^-)^{\Delta}B^{\sigma}M_j^{\sigma}\\ \\
&&- M_k\left(P_{k+1}\ldots P_{\nu-1}-P_{k+1}\ldots P_j\right)\Pi_jB^{-}(BM_jB^-)^{\Delta}B^{\sigma}M_j^{\sigma}.
\end{eqnarray*}
Then
\begin{eqnarray*}
&&{M_kP_{k+1}\ldots P_{\nu-1}G_{\nu}^{-1}C^{\sigma}M_j^{\sigma}}\\ \\
&=& -M_kM_j^{\sigma}-M_k \sum\limits_{i=k}^{j-1}\left(P_{k+1}\ldots P_{i}-I\right)B^{-}(BM_j B^-)^{\Delta}B^{\sigma}M_j^{\sigma}\\ \\
&&+M_k\left(P_{k+1}\ldots P_{\nu-1}-P_{k+1}\ldots P_j\right)\Pi_jB^{-}(BM_jB^-)^{\Delta}B^{\sigma}M_j^{\sigma}\\ \\
&=&  \bigg(-M_k-M_k \sum\limits_{i=k}^{j-1}\left(P_{k+1}\ldots P_{i}-I\right)B^{-}(BM_j B^-)^{\Delta}B^{\sigma}\\ \\
&&+M_k\left(P_{k+1}\ldots P_{\nu-1}-P_{k+1}\ldots P_j\right)\Pi_jB^{-}(BM_jB^-)^{\Delta}B^{\sigma}\bigg)M_j^{\sigma}.
\end{eqnarray*}
Set
\begin{eqnarray*}
\mathcal{M}_{kj}&=& -M_k-M_k \sum\limits_{i=k}^{j-1}\left(P_{k+1}\ldots P_{i}-I\right)B^{-}(BM_j B^-)^{\Delta}B^{\sigma}\\ \\
&&+M_k\left(P_{k+1}\ldots P_{\nu-1}-P_{k+1}\ldots P_j\right)\Pi_jB^{-}(BM_jB^-)^{\Delta}B^{\sigma}.
\end{eqnarray*}
Therefore
\begin{equation*}
M_kP_{k+1}\ldots P_{\nu-1}G_{\nu}^{-1}C^{\sigma}M_j^{\sigma}= \mathcal{M}_{kj}v_j^{\sigma}.
\end{equation*}
From here,
\begin{equation*}
I_2= \mathcal{M}_{k0}v_0^{\sigma}+\sum\limits_{j=1}^{\nu-1}\mathcal{M}_{kj}v_j^{\sigma}
\end{equation*}
and
\begin{equation}
\label{6.31}
\begin{array}{lll}
U_kG_{\nu}^{-1}C_{\nu}&=& I_1+I_2= \widehat{\mathcal{K}}_ku^{\sigma}+\sum\limits_{j=1}^{\nu-1}\mathcal{M}_{kj}v_j^{\sigma}.
\end{array}
\end{equation}
\end{enumerate}
We multiply the equation \eqref{6.11} by $U_k$ and we find
\begin{equation*}
U_kG_{\nu}^{-1}G_0B^{-}(Bx)^{\Delta}= U_kG_{\nu}^{-1}C^{\sigma}x^{\sigma}+U_kG_{\nu}^{-1}f.
\end{equation*}
Hence, using \eqref{6.30} and \eqref{6.31}, we arrive at
\begin{eqnarray*}
{\sum\limits_{j=k+1}^{\nu-1}\mathcal{N}_{kj}(Bv_j)^{\Delta}+\widetilde{K}_k B^{-\sigma} u^{\sigma}+\sum\limits_{j=0}^{\nu-1}\widetilde{\mathcal{M}}_{kj}v_j^{\sigma}}= \widehat{\mathcal{K}}_k u^{\sigma}+\sum\limits_{j=0}^{\nu-1}\mathcal{M}_{kj}v_j^{\sigma}+U_kG_{\nu}^{-1}f,
\end{eqnarray*}
whereupon
\begin{eqnarray*}
\sum\limits_{j=k+1}^{\nu-1}\mathcal{N}_{kj}(Bv_j)^{\Delta}+\left(\widetilde{\mathcal{K}}_kB^--\widehat{\mathcal{K}}_k\right)+\sum\limits_{j=0}^{\nu-1}\left(\widetilde{\mathcal{M}}_{kj}-\mathcal{M}_{kj}\right)v_j^{\sigma}+ U_kG_{\nu}^{-1}f.
\end{eqnarray*}
Note that
\begin{eqnarray*}
\widetilde{\mathcal{M}}_{kj}-\mathcal{M}_{kj}&=& M_k\left(I- P_{k+1}\ldots P_{\nu-1}\right)B^{-}(BM_jB^-)^{\Delta}B^{\sigma}\\ \\
&&+M_k\left(P_{k+1}\ldots P_{\nu-1}-I\right)B^{-}(BM_j B^-)^{\Delta}B^{\sigma}= 0
\end{eqnarray*}
and
\begin{eqnarray*}
\widetilde{\mathcal{M}}_{kk}-\mathcal{M}_{kk}&=& M_k\left(I- P_{k+1}\ldots P_{\nu-1}\right)B^{-}(BM_kB^-)^{\Delta}B^{\sigma}\\ \\
&&+M_k\left(P_{k+1}\ldots P_{\nu-1}-I\right)B^{-}(BM_k B^-)^{\Delta}B^{\sigma}+M_k= M_k.
\end{eqnarray*}
Therefore
\begin{equation}
\label{6.35}
\begin{array}{lll}
M_k v_k^{\sigma}&=&\sum\limits_{j=k+1}^{\nu-1}\mathcal{N}_{kj}(Bv_j)^{\Delta}+\left(\widetilde{\mathcal{K}}_kB^--\widehat{\mathcal{K}}_k\right)+\sum\limits_{j=k+1}^{\nu-1}\left(\widetilde{\mathcal{M}}_{kj}-\mathcal{M}_{kj}\right)v_j^{\sigma}+ U_kG_{\nu}^{-1}f,\\ \\
v_k&=& M_kv_k.
\end{array}
\end{equation}
Since
\begin{equation*}
BM_k x= BM_k B^-Bx
\end{equation*}
and
\begin{equation*}
BM_k B^-= B\Pi_{k-1}B^--B\Pi_k B^-,
\end{equation*}
and $BM_k xB^-$ and $Bx$ are in $\mathcal{C}^1$, we have that
\begin{equation*}
Bv_k= BM_k x
\end{equation*}
is $\mathcal{C}^1$ for any $k\geq 1$.

\subsection{Decoupling}
In this section, we  will use the notations from the previous sections in this chapter.
\begin{theorem}
\label{theorem6.36} Assume that the pair $(A,B)$ in equation \eqref{6.1} is $(\sigma,1)$-regular with tractability index $\nu$ on $I$ and $f$ is enough smooth function. Then $x\in \mathcal{C}_B^1(I)$ solves \eqref{6.1} if and only if it can be written as
\begin{equation}
\label{6.37}
x=B^- u+v_{\nu-1}+\cdots+v_1+v_0,
\end{equation}
where $u\in \mathcal{C}^1(I)$ solves the inherent equation \eqref{6.17} and $v_k\in \mathcal{C}^1(I)$ satisfies \eqref{6.35}.
\end{theorem}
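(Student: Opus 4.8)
The plan is to read \eqref{6.1} through the resolution of the identity $I=\Pi_{\nu-1}+\sum_{j=0}^{\nu-1}M_j$ recorded in Subsection \ref{section6.4}, and to recognize each decoupled equation as one projected component of \eqref{6.1}. First I would note that, since $R=BB^-$ gives $A^{\sigma}=A^{\sigma}R=G_0B^-$ and $G_{\nu}$ is nonsingular, equation \eqref{6.1} is equivalent to \eqref{6.11}, i.e. to $L[x]:=G_{\nu}^{-1}G_0B^-(Bx)^{\Delta}-G_{\nu}^{-1}C^{\sigma}x^{\sigma}-G_{\nu}^{-1}f=0$. Putting $u=B\Pi_{\nu-1}x$ and $v_j=M_jx$, the identity $B^-u=B^-B\Pi_{\nu-1}x=\Pi_0\Pi_{\nu-1}x=\Pi_{\nu-1}x$ shows that the decomposition $x=\Pi_{\nu-1}x+\sum_jM_jx$ is exactly the representation \eqref{6.37}; thus the asserted form of $x$ is nothing but this change of variables.

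For the direct implication I would invoke the computations already carried out. Multiplying $L[x]=0$ on the left by $B\Pi_{\nu-1}$ and simplifying (the reformulation leading to \eqref{6.17}) reproduces the inherent equation for $u$, while multiplying by $U_k=M_kP_{k+1}\ldots P_{\nu-1}$ and simplifying (Subsections \ref{section6.5} and \ref{section6.6}) reproduces \eqref{6.35} for $v_k$. These are algebraic identities valid for every $x\in\mathcal{C}_B^1(I)$: they express $B\Pi_{\nu-1}L[x]$ and $U_kL[x]$ identically through $u$ and the $v_j$, so if $x$ solves \eqref{6.1} then \eqref{6.17} and \eqref{6.35} hold. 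Here I would also verify the regularity claims, using $u=B\Pi_{\nu-1}B^-\,Bx\in\mathcal{C}^1$ via $(A4)$, and $Bv_k=BM_kx\in\mathcal{C}^1$ as shown at the end of Subsection \ref{section6.5}, the smoothness of $f$ propagating $\mathcal{C}^1$-regularity through the algebraically determined components.

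The converse is where the actual work lies. Given $u$ solving \eqref{6.17} in the invariant subspace $\text{im}\,B\Pi_{\nu-1}$ supplied by the preceding invariance theorem, and $v_k$ satisfying \eqref{6.35}, I would set $x=B^-u+\sum_jv_j$ and first check consistency of the change of variables: invariance gives $B\Pi_{\nu-1}B^-u=u$, so $B^-u\in\text{im}\,\Pi_{\nu-1}$, whence $M_jB^-u=M_j\Pi_{\nu-1}(\,\cdot\,)=0$ by \eqref{3.102}; combined with \eqref{3.96} this yields $M_jx=v_j$ and $B\Pi_{\nu-1}x=u$. The same identities of the forward step then read $B\Pi_{\nu-1}L[x]=0$ and $U_kL[x]=0$, and it remains to conclude $L[x]=0$. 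This is the decisive point: the operators $B\Pi_{\nu-1}$ and $U_0,\ldots,U_{\nu-1}$ have trivial common kernel. Indeed, Proposition \ref{proposition3.6} gives $\mathbb{R}^n=N_0\oplus\cdots\oplus N_{\nu-1}\oplus\text{im}\,\Pi_{\nu-1}$; Theorem \ref{theorem3.130} gives $\text{ker}\,U_k=\bigoplus_{j\neq k}N_j\oplus\text{im}\,\Pi_{\nu-1}$, so $\bigcap_k\text{ker}\,U_k=\text{im}\,\Pi_{\nu-1}$; and since $\Pi_0=B^-B$ forces $B$ to be injective on $\text{im}\,\Pi_0\supseteq\text{im}\,\Pi_{\nu-1}$, one gets $\text{ker}\,(B\Pi_{\nu-1})=\text{ker}\,\Pi_{\nu-1}$. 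Hence the common kernel is $\text{im}\,\Pi_{\nu-1}\cap\text{ker}\,\Pi_{\nu-1}=\{0\}$, so $L[x]=0$ and $x$ solves \eqref{6.1}.

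I expect the main obstacle to be the bookkeeping of the converse rather than the linear-algebra core. One must confirm that the manipulations of the earlier subsections are genuine identities, so that the decoupled equations coincide with $B\Pi_{\nu-1}L[x]=0$ and $U_kL[x]=0$ instead of being only implied by them, and one must handle the $\sigma$-shifted projector products carefully — recovering $v_k^{\sigma}$ from $M_kv_k^{\sigma}$ and controlling the $\mathcal{C}^1$-regularity of the purely algebraically determined components such as $v_0$. By contrast, once the image/kernel descriptions of Theorem \ref{theorem3.130} are available, the triviality of the common kernel, and hence the reconstruction $L[x]=0$, is clean.
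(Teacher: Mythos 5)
Your proposal is correct, and its skeleton coincides with the paper's proof: the forward direction is, in both cases, a citation of the computations preceding the theorem, and the converse begins exactly as the paper does, by verifying the consistency of the change of variables, namely $B\Pi_{\nu-1}x=u$ and $M_kx=v_k$, via the projector identities \eqref{3.96}, \eqref{3.100}, \eqref{3.102}. The two arguments differ only in the final reconstruction step. The paper converts the projected equations into a resolution of the identity: it premultiplies the $B\Pi_{\nu-1}$-equation by $B^-$ to get the $\Pi_{\nu-1}$-projected equation \eqref{6.38}, premultiplies the $U_k$-equations \eqref{6.39} by $Q_k$ (using $Q_kU_k=Q_kM_kP_{k+1}\ldots P_{\nu-1}=Q_kP_{k+1}\ldots P_{\nu-1}=V_k$) to get the $V_k$-projected equations \eqref{6.40}, and then sums everything with the telescoping identity $I=\Pi_{\nu-1}+\sum_{k=0}^{\nu-1}V_k$, which yields $G_{\nu}^{-1}A^{\sigma}(Bx)^{\Delta}=G_{\nu}^{-1}C^{\sigma}x^{\sigma}+G_{\nu}^{-1}f$ and hence \eqref{6.1} directly. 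You instead keep the operators $B\Pi_{\nu-1}$ and $U_k$ and argue that their common kernel is trivial, using $\ker(B\Pi_{\nu-1})=\ker\Pi_{\nu-1}=N_0\oplus\cdots\oplus N_{\nu-1}$ (injectivity of $B$ on $\text{im}\,\Pi_0$ together with Proposition \ref{proposition3.6}) and $\bigcap_k\ker U_k=\text{im}\,\Pi_{\nu-1}$ (formula \eqref{3.133} of Theorem \ref{theorem3.130}). The two mechanisms are dual and both valid: the paper's summation is constructive and needs only the elementary telescoping identity, whereas yours is shorter at the linear-algebra level but leans on the kernel description in Theorem \ref{theorem3.130} and on the direct-sum decomposition of the state space. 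Finally, both proofs rest on the same load-bearing point, which you are right to isolate explicitly: the manipulations of the earlier subsections must be identities in $x$ (product rules, the $C_j$-chain relation, and projector algebra), so that \eqref{6.17} and \eqref{6.35} are equivalent to, and not merely implied by, the $B\Pi_{\nu-1}$- and $U_k$-projections of \eqref{6.11}; the paper uses this silently when it passes from \eqref{6.35} back to \eqref{6.39}.
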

 \begin{proof}
 If $x\in \mathcal{C}^1(I)$ solves \eqref{6.1}, then  by the computations in the previous sections, we get \eqref{6.17} and \eqref{6.35}. Now, we will prove the converse assertion.  Note that the identity
 \begin{eqnarray*}
 Bv_0&=& BM_0 v_0= 0
 \end{eqnarray*}
 implies that
 \begin{eqnarray*}
 Bx&=& BB^- u+Bv_{\nu-1}+\cdots+ Bv_1= u+Bv_{\nu-1}+\cdots+Bv_1\in \mathcal{C}^1(I),
 \end{eqnarray*}
 where we have used that $u\in \text{im}\,B\Pi_{\nu-1}B^-$, i.e.,
 \begin{equation*}
 u= B\Pi_{\nu-1}B^- u
 \end{equation*}
 and
 \begin{eqnarray*}
 BB^-u&=& BB^-B\Pi_{\nu-1}B^-u= BP_0\Pi_{\nu-1}B^-u= B\Pi_{\nu-1}B^- u= u.
 \end{eqnarray*}
 Now, using \eqref{3.96}, \eqref{3.100}, \eqref{3.102} and the decomposition \eqref{6.37}, we find
 \begin{eqnarray*}
 B\Pi_{\nu-1}x&=& B\Pi_{\nu-1}B^- u+B\Pi_{\nu-1}v_{\nu-1}+\cdots+ B\Pi_{\nu-1}v_1+B\Pi_{\nu-1}v_0\\ \\
 &=& u+B\Pi_{\nu-1}M_{\nu-1}v_{\nu-1}+\cdots+ B\Pi_{\nu-1}M_1v_1+B\Pi_{\nu-1}M_0 v_0=u,
 \end{eqnarray*}
 i.e.,
 \begin{equation*}
 u=B\Pi_{\nu-1}x.
 \end{equation*}
 Now, we multiply \eqref{6.37} by $M_k$ and we obtain
 \begin{eqnarray*}
 M_k x&=& M_k B^- u+M_k v_{\nu-1}+\cdots +M_k v_k +\cdots +M_k v_1+M_k v_0\\ \\
 &=& M_k B^- u+ M_k M_{\nu-1}v_{\nu-1}+\cdots+ M_k M_k v_k+\cdots+M_k M_1v_1+M_k M_0 v_0\\ \\
 &=& M_k B^- u+v_k= M_k B^- B\Pi_{\nu-1}B^- u+v_k= M_k \Pi_{\nu-1}B^- u+v_k= v_k.
 \end{eqnarray*}
 Note that the inherent equation \eqref{6.17} is restated by the equation \eqref{6.11}. Then, we multiply \eqref{6.11} by $B\Pi_{\nu-1}$ and we find
 \begin{equation*}
 B\Pi_{\nu-1}G_{\nu}^{-1}G_0B^{-}(Bx)^{\Delta}=B\Pi_{\nu-1}G_{\nu}^{-1}C^{\sigma}x^{\sigma}+B\Pi_{\nu-1}G_{\nu}^{-1}f,
 \end{equation*}
 which we premultiply by $B^{-}$ and  using that
 \begin{equation*}
 B^-B\Pi_{\nu-1}=\Pi_{\nu-1},
 \end{equation*}
 we find
 \begin{equation*}
 B^{-}B\Pi_{\nu-1}G_{\nu}^{-1}G_0B^{-}(Bx)^{\Delta}=B^{-}B\Pi_{\nu-1}G_{\nu}^{-1}C^{\sigma}x^{\sigma}+B^{-}B\Pi_{\nu-1}G_{\nu}^{-1}f,
 \end{equation*}
 or
 \begin{equation}
 \label{6.38} \Pi_{\nu-1}G_{\nu}^{-1}G_0B^{-}(Bx)^{\Delta}= \Pi_{\nu-1}G_{\nu}^{-1}C^{\sigma}x^{\sigma}+ \Pi_{\nu-1}G_{\nu}^{-1}f,
 \end{equation}
 from where, using \eqref{6.35} and the computations in the previous sections, we get
 \begin{equation}
 \label{6.39} U_kG_{\nu}^{-1}A(Bx)^{\Delta}= U_kG_{\nu}^{-1}C^{\sigma}x^{\sigma}+ U_kG_{\nu}^{-1}f,\quad k=\nu-1, \ldots, 0.
 \end{equation}
 Since
 \begin{equation*}
 Q_k M_k=Q_k,
 \end{equation*}
 we obtain
 \begin{eqnarray*}
 Q_k U_k&=& Q_k M_k P_{k+1}\ldots P_{\nu-1}= Q_k P_{k+1}\ldots P_{\nu-1}= V_k.
 \end{eqnarray*}
 We multiply \eqref{6.39}  by $Q_k$  and we find
 \begin{equation*}
 Q_kU_kG_{\nu}^{-1}A(Bx)^{\Delta}= Q_kU_kG_{\nu}^{-1}C^{\sigma}x^{\sigma}+ Q_kU_kG_{\nu}^{-1}f,\quad k=\nu-1, \ldots, 0,
 \end{equation*}
 or
 \begin{equation}
 \label{6.40} V_kG_{\nu}^{-1}A^{\sigma}(Bx)^{\Delta}= V_kG_{\nu}^{-1}C^{\sigma}x^{\sigma}+ V_kG_{\nu}^{-1}f,\quad k=\nu-1, \ldots, 0.
 \end{equation}
 Note that
 \begin{equation*}
 I=\Pi_{\nu-1}+\sum\limits_{k=0}^{\nu-1}V_k.
 \end{equation*}
 Then, by \eqref{6.38} and \eqref{6.40}, we get
 \begin{equation*}
 \left(\Pi_{\nu-1}+\sum\limits_{k=0}^{\nu-1}V_k\right)G_{\nu}^{-1}A^{\sigma}(Bx)^{\Delta}= \left(\Pi_{\nu-1}+\sum\limits_{k=0}^{\nu-1}V_k\right)G_{\nu}^{-1}C^{\sigma}x^{\sigma}+ \left(\Pi_{\nu-1}+\sum\limits_{k=0}^{\nu-1}V_k\right)G_{\nu}^{-1}f
 \end{equation*}
 or
 \begin{equation*}
 G_{\nu}^{-1}A^{\sigma}(Bx)^{\Delta}=G_{\nu}^{-1}C^{\sigma}x^{\sigma}+G_{\nu}^{-1}f,
 \end{equation*}
 or
 \begin{equation*}
 A^{\sigma}(Bx)^{\Delta}=C^{\sigma}x^{\sigma}+f.
 \end{equation*}
 This completes the proof.
 \end{proof}
 \section{An Example}
  Let $\mathbb{T}=2^{\mathbb{N}_0}$ and
 \begin{eqnarray*}
 A(t)&=& \left(
           \begin{array}{ccc}
             1 & 0 & 0 \\
             0 & \frac{1}{t} & 0 \\
             0 & 0 & 1 \\
             0 & 0 & 0 \\
             0 & 0 & 0 \\
           \end{array}
         \right),\quad B(t)= \left(
           \begin{array}{ccccc}
             1 & 0 & 0 & 0 & 0 \\
             0 & 2t & 0 & 0 & 0 \\
             0 & 0 & 1 & 0 & 0 \\
           \end{array}
         \right),
          \end{eqnarray*}
           \begin{eqnarray*}
         C(t)= \left(
           \begin{array}{ccccc}
             0 & 0 & 0 & -1 & 1 \\
             0 & 0 & 1 & 1 & 0 \\
             0 & -1 & 0 & 0 & 0 \\
             -1 & 1 & 0 & 0 & 0 \\
             1 & 0 & 0 & 0 & t^2 \\
           \end{array}
         \right),\quad t\in \mathbb{T}.
 \end{eqnarray*}
We have
 $\sigma(t)=2t$, $t\in \mathbb{T}$, 
 and
 \begin{eqnarray*}
 A^{\sigma}(t)&=& \left(
           \begin{array}{ccc}
             1 & 0 & 0 \\
             0 & \frac{1}{2t} & 0 \\
             0 & 0 & 1 \\
             0 & 0 & 0 \\
             0 & 0 & 0 \\
           \end{array}
         \right),\quad 
 G_0(t)= A^{\sigma}(t)B(t)= \left(
       \begin{array}{ccccc}
         1 & 0 & 0 & 0 & 0 \\
         0 & 1 & 0 & 0 & 0 \\
         0 & 0 & 1 & 0 & 0 \\
         0 & 0 & 0 & 0 & 0 \\
         0 & 0 & 0 & 0 & 0 \\
       \end{array}
     \right),\quad t\in \mathbb{T}.
 \\ \\
 Q_0(t)&=& M_0(t)= \left(
          \begin{array}{ccccc}
            0 & 0 & 0 & 0 & 0 \\
            0 & 0 & 0 & 0 & 0 \\
            0 & 0 & 0 & 0 & 0 \\
            0 & 0 & 0 & 1 & 0 \\
            0 & 0 & 0 & 0 & 1 \\
          \end{array}
        \right),\\ \\
 P_0(t)&=& \Pi_0(t)= I-M_0(t)= \left(
       \begin{array}{ccccc}
         1 & 0 & 0 & 0 & 0 \\
         0 & 1 & 0 & 0 & 0 \\
         0 & 0 & 1 & 0 & 0 \\
         0 & 0 & 0 & 0 & 0 \\
         0 & 0 & 0 & 0 & 0 \\
       \end{array}
     \right),\quad t\in \mathbb{T}.
 \end{eqnarray*}
 Next,
 \begin{eqnarray*}
 G_1(t)&=& G_0(t)+C_0(t)Q_0(t)= \left(
           \begin{array}{ccccc}
             1 & 0 & 0 & -1 & 1 \\
             0 & 1 & 0 & 1 & 0 \\
             0 & 0 & 1 & 0 & 0 \\
             0 & 0 & 0 & 0 & 0 \\
             0 & 0 & 0 & 0 & t^2 \\
           \end{array}
         \right),\\ \\
 Q_1(t)&=& \left(
           \begin{array}{ccccc}
             1 & 0 & 0 & 0 & 0 \\
             -1 & 0 & 0 & 0 & 0 \\
             0 & 0 & 0 & 0 & 0 \\
             1 & 0 & 0 & 0 & 0 \\
             0 & 0 & 0 & 0 & 0 \\
           \end{array}
         \right),\quad 
 P_1(t)= \left(
           \begin{array}{ccccc}
             0 & 0 & 0 & 0 & 0 \\
             1 & 1 & 0 & 0 & 0 \\
             0 & 0 & 1 & 0 & 0 \\
             -1 & 0 & 0 & 1 & 0 \\
             0 & 0 & 0 & 0 & 1 \\
           \end{array}
         \right), \\ \\
 \Pi_1(t)&=& P_0(t)P_1(t)= \left(
           \begin{array}{ccccc}
             0 & 0 & 0 & 0 & 0 \\
             1 & 1 & 0 & 0 & 0 \\
             0 & 0 & 1 & 0 & 0 \\
             0 & 0 & 0 & 0 & 0 \\
             0 & 0 & 0 & 0 & 0 \\
           \end{array}
         \right),\quad 
B^-(t)=\left(
         \begin{array}{ccc}
           1 & 0 & 0 \\
           0 & \frac{1}{2t} & 0 \\
           0 & 0 & 1 \\
           0 & 0 & 0 \\
           0 & 0 & 0 \\
         \end{array}
       \right),\\ \\
B(t) \Pi_0(t) B^-(t)
&=& \left(
      \begin{array}{ccc}
        1 & 0 & 0 \\
        0 & 1 & 0 \\
        0 & 0 & 1 \\
      \end{array}
    \right),\quad t\in \mathbb{T}.
\end{eqnarray*}
Therefore
\begin{equation*}
(B\Pi_0 B^-)^{\Delta}(t)=0,\quad t\in \mathbb{T}.
\end{equation*}
Next,
\begin{eqnarray*}
M_1(t)&=& \Pi_0(t)-\Pi_1(t)
= \left(
           \begin{array}{ccccc}
             1 & 0 & 0 & 0 & 0 \\
             -1 & 0 & 0 & 0 & 0 \\
             0 & 0 & 0 & 0 & 0 \\
             0 & 0 & 0 & 0 & 0 \\
             0 & 0 & 0 & 0 & 0 \\
           \end{array}
         \right),\\ \\
B(t)\Pi_1(t) B^-(t)&=& \left(
      \begin{array}{ccc}
        0 & 0 & 0 \\
        1 & 1 & 0 \\
        0 & 0 & 1 \\
      \end{array}
    \right),\quad t\in \mathbb{T},
\end{eqnarray*}
whereupon
\begin{equation*}
(B\Pi_1 B^-)^{\Delta}(t)=0,\quad t\in \mathbb{T}.
\end{equation*}
Moreover,
\begin{eqnarray*}
C_1(t)&=& \left(
            \begin{array}{ccccc}
              0 & 0 & 0 & 0 & 0 \\
              0 & 0 & 1 & 0 & 0 \\
              -2 & -1 & 0 & 0 & 0 \\
              3 & 1 & 0 & 0 & 0 \\
              -1 & 0 & 0 & 0 & 0 \\
            \end{array}
          \right), \quad 
C_1(t)M_1(t)
= \left(
           \begin{array}{ccccc}
             0 & 0 & 0 & 0 & 0 \\
             0 & 0 & 0 & 0 & 0 \\
             -1 & 0 & 0 & 0 & 0 \\
             2 & 0 & 0 & 0 & 0 \\
             -1 & 0 & 0 & 0 & 0 \\
           \end{array}
         \right),\quad t\in \mathbb{T},
\end{eqnarray*}
and
\begin{eqnarray*}
G_2(t)&=& G_1(t)+C_1(t) Q_1(t)= \left(
           \begin{array}{ccccc}
             1 & 0 & 0 & -1 & 1 \\
             0 & 1 & 0 & 1 & 0 \\
             -1 & 0 & 1 & 0 & 0 \\
             2 & 0 & 0 & 0 & 0 \\
             -1 & 0 & 0 & 0 & t^2 \\
           \end{array}
         \right),\quad t\in \mathbb{T}.
\end{eqnarray*}
We compute
\begin{eqnarray*}
\det G_0(t)=\det G_1(t)=0,\quad \det G_2(t)=2t^2\ne 0,\quad t\in \mathbb{T}.
\end{eqnarray*}
Thus, $(A, B)$ is a $(\sigma, 1)$-regular matrix pair with tractability index $2$.

\section{Conclusion}

Decoupling in dynamic systems is a powerful tool that allows to simplify a system and, sometimes, solves it analytically. Besides, this is a very important approach in stability theory where it allows the construction of bounded solutions, Perron manifolds, etc. We plan to use the obtained result in developing a specific structural stability theory for time-scale systems.

There are several stumbling blocks in this way.

\begin{enumerate}
\item Some linear time scale systems have non-unique solutions. 
\item The role of autonomous systems in time scale dynamics is much less pronounced than that in ordinary differential equations. In particular, exponential functions are not always a convenient tool to estimate the growth of solutions. Moreover, it is not easy to construct such a scale of functions (e.g. to introduce Lyapunov exponents) for a generic time scale. 
\item The Functional Analysis on time scales is less developed and the statements are not always similar to classical ones.
\end{enumerate}

This is why we developed a specific version of the projection approach that does not require us to make additional estimates.

We formulate the main statement of our paper as Theorem \ref{theorem6.36}. There, we formulate the conditions for decoupling. Moreover, we offer an explicit decoupling procedure based. We mustn't make any assumptions about the asymptotic behavior of solutions (e.g. exponential dichotomy). 

\section*{Acknowledgements}
The work of the second co-author was supported by Gda\'{n}sk University of Technology by the DEC 14/2021/IDUB/I.1 grant under the Nobelium - 'Excellence Initiative - Research University' program.

\end{document}